\theoremstyle{definition}
\newtheorem{Def}{Definition}[section]
\newtheorem{rem}[Def]{Remark}
\newtheorem{nota}[Def]{Notation}
\newtheorem{ex}[Def]{Example}
\theoremstyle{plain}
\newtheorem{prop}[Def]{Proposition}
\newtheorem{thm}[Def]{Theorem}
\newtheorem{lem}[Def]{Lemma}
\newcommand{\N}{{\mathbb N}}
\newcommand{\R}{{\mathbb R}}
\newcommand{\Span}{\mathrm{Span}}
\newcommand{\rank}{\mathop{\mathrm{rank}}}
\def\i<#1>{\langle #1 \rangle}
\def\l<#1>{\left\langle #1 \right\rangle}
\newcommand{\Ch}{\mathrm{Ch}}
\newcommand{\DD}{\partial}
\newcommand{\calI}{\mathcal{I}}
\newcommand{\Sing}{\mathrm{Sing}}
\newcommand{\Ker}{\mathop{\mathrm{Ker}}}
\newcommand{\corank}{\mathop{\mathrm{corank}}}
\newcommand{\pr}{\mathrm{pr}}
\def\keywords{\xdef\@thefnmark{}\@footnotetext}
\title{Second-order generalized Monge--Amp\`ere equations on a plane and its geometric singular solutions}
\author{Masahiro Kawamata}
\begin{document}
\tikzset{auto}
\pagestyle{empty}

\keywords{2020 \emph{Mathematics Subject Classification.} 58A15, 58A17, 57R45.}%
\keywords{\emph{Key words and phrases.} exterior differential system; generalized Monge--Amp\`ere equation; generalized Monge--Amp\`ere system; geometric singular solution; jet space}%

\maketitle

\begin{abstract}
In the present paper, we study some generalized Monge--Amp\`ere equations in terms of special exterior differential systems on a jet space. Moreover, we construct geometric singular solutions of the generalized Monge--Amp\`ere equations by using the method of Cauchy characteristics. Furthermore, we give criteria for geometric singular solutions to be right-left equivalent to the cuspidal edge, swallowtail and butterfly.

\end{abstract} 

%\tableofcontents

\section{Introduction}
In recent years, differential equations have been studied not only by analytical, but also by geometrical methods. 
Among them, there are many studies of differential equations using differential ideals in the ring of differential forms on manifolds (called exterior differential systems, see Definition~\ref{def:EDS}). According to the theory of exterior differential systems, $k$th-order differential equations can usually be analyzed by the pair of submanifolds in $k$-jet spaces and canonical exterior differential systems on them. Then, symmetries, classifications and quadratures of differential equations have been investigated actively from the viewpoint of exterior differential systems (\cite{bryant, bryant2, ishikawa-morimoto, KLR, LRC, morimoto95}). On the other hand, one of the most important class of differential equations is the Monge-Amp\`ere equation (or MAE for short), which is a class of second-order partial differential equations with two independent variables. This class of partial differential equations has appeared in mathematics, physics, and many other fields. Under the above situation, MAEs can usually be investigated on 2-jet spaces. However, Morimoto defines a special exterior differential system on a 1-jet space (called Monge--Amp\`ere systems, or MAS for short) for each MAE which is a second-order partial differential equation, and show that there is a natural correspondence between (local) solutions of MAEs and (local) integral manifolds of MASs (\cite{morimoto95}). Namely, Morimoto gives a new method to analyze MAEs in the sense of the theory of exterior differential systems.

As a study of MAEs from another point of view, generalization problems have been considered. For example, the symplectic Monge--Amp\`ere equation and the third-order Monge--Amp\`ere equation are well-known generalizations of MAEs (\cite{Boillat, Boillat2, donate-valenti}). In our previous study, the author and Shibuya define the \textit{generalized Monge--Amp\`ere equation} (or \textit{GMAE} for short), which includes the above -mentioned generalizations of MAEs and other important differential equations (\cite{KS}). Moreover, there is a natural correspondence between (local) solutions of $k$th order GMAEs and (local) integral manifolds of special exterior differential systems on $(k-1)$-jet spaces (called \textit{generalized Monge--Amp\`ere systems}, or \textit{GMAS} for short). Therefore, it is expected that partial differential equations included in GMAEs which are not included in MAEs (called a \textit{non-trivial GMAE}) are treated like MAEs. However, there are many properties of GMAEs and GMASs which have not yet been examined.

On the other hand, solutions of differential equations with graphs which admit singularities have been also studied, and such solutions are called \textit{geometric singular solutions} or \textit{geometric solutions} (see Definition~\ref{def:singular-sol}). In fact, Izumiya and Kossioris construct geometric singular solutions of the Hamilton-Jacobi equation and classify their singularities (\cite{izumiya1993, izumiya-kossioris1995, izumiya-kossioris1997}). As for the differential equations of Clairaut type, Saji and Takahashi investigate the relationship between its geometric singular solutions and envelopes obtained from them (\cite{takahashi-saji}). For MAEs, Morimoto and Ishikawa show that a well-known singularity (called the open umbrella) appears as a geometric singular solution (\cite{ishikawa-morimoto}).

In this paper, we examine the following system of partial differential equations which is one of the simplest non-trivial GMAE by using the corresponding GMAS, and give some properties in terms of exterior differential systems:
\begin{flalign}\label{ODS000}
	\left\{
		\begin{array}{l}
			z_{xx}=\alpha(x,y,z,z_{x},z_{y}) z_{xy}\\
			z_{xy}=\alpha(x,y,z,z_{x},z_{y})z_{yy}.
			\end{array}
	\right.
\end{flalign}
In particular, we obtain first integrals of the Cauchy characteristic system of the GMAS using the function $\alpha$, explicitly. Then, for the case that the Cauchy characteristic system of the GMAS does not vanish, we classify the GMAE into several classes. In addition, we give criteria for the GMAE included in each such class. Moreover, we explicitly construct geometric singular solutions of the GMAE (\ref{ODS000}) by using the method of Cauchy characteristics and investigate the obtained geometric singular solutions by applying the singularity theory (\cite{USY}). Especially, we give criteria for these geometric singular solutions to be right-left equivalent to the cuspidal edge, swallowtail and butterfly.

Our paper is organized as follows. In Section 2, we recall the basics of exterior differential systems and jet spaces. In Section 3, we give some basic properties of a GMAE from the viewpoint of exterior differential systems. In Section 4, we calculate first integrals of the Cauchy characteristic system of the GMAS corresponding to (\ref{ODS000}). In Section 5, we classify GMASs into several classes using the theory of Pfaffian systems. In Section 6, we construct geometric singular solutions of (\ref{ODS000}) and give criteria for the obtained solutions to be right-left equivalent to the cuspidal edge, swallowtail and butterfly. Furthermore, we also construct concrete examples of geometric singular solutions which have these singularities.

In the present paper, we assume all objects are of class $C^{\infty}$.

\section{Preliminaries}
In this section, we recall the foundations of exterior differential systems. Let $\Omega^{\ast}(M)$ and $\Omega^{i}(M)$ be the set of all differential forms and all differential $i$-forms, respectively, on a manifold $M$. 

\begin{Def}\label{def:EDS}
	A subset $\mathcal{J}$ of $\Omega^{\ast}(M)$ is called an \textit{exterior differential system} (\textit{EDS} for short) on a manifold $M$, if $\mathcal{J}$ satisfies the following conditions: 
	\begin{enumerate}
  		\item The subset $\mathcal{J}$ is a homogeneous ideal of $\Omega^{\ast}(M)$, that is, $\mathcal{J}$ is written by the following form:
		\[
			\mathcal{J} = \bigoplus^{\infty}_{i=0}(\mathcal{J} \cap \Omega^{i}(M)).
		\]
  		\item The subset $\mathcal{J}$ is closed under exterior differentiation on $M$.
 	\end{enumerate}
\end{Def}

	Let $\{\omega_{1},\ldots,\omega_{r}\}_{\mathrm{alg}}$ denote the ideal in $\Omega^{\ast}(M)$ generated by homogeneous differential forms $\omega_{1},\ldots,\omega_{r}$ $\in \Omega^{\ast}(M)$. Moreover, we define the ideal $\{\omega_{1},\ldots,$ $\omega_{r}\}_{\mathrm{diff}}$ by 
	\[
		\{\omega_{1},\ldots,\omega_{r}\}_{\mathrm{diff}} := \{\omega_{1},\ldots,\omega_{r},d\omega_{1},\ldots,d\omega_{r}\}_{\mathrm{alg}},
	\]
	where $d \omega_{i}$ is the exterior derivative of $\omega_{i}$. Then the ideal $\{\omega_{1},\ldots,\omega_{r}\}_{\mathrm{diff}}$ is an EDS on $M$, and called \textit{the EDS generated by $\omega_{1},\ldots,\omega_{r}$}.

We next define a jet space. In this paper, we only define a 1-jet space on some three-dimensional manifolds. Hereafter, let $J^{0}(2,1)$ be a three-dimensional manifold. We remark that the projective cotangent bundle $PT^{\ast}J^{0}(2,1)$ of $J^{0}(2,1)$ has a natural contact structure, that is, there exists natural contact form $\omega_{0}$ on $PT^{\ast}J^{0}(2,1)$.

\begin{Def}
	Let $J^{0}(2,1)$ be a three-dimensional manifold, and $\omega_{0}$ be the natural contact form on $PT^{\ast}J^{0}(2,1)$. Then we denote $PT^{\ast}J^{0}(2,1)$ by $J^{1}(2,1)$, and $J^{1}(2,1)$ equipped with the EDS $\{\omega_{0}\}_{\mathrm{diff}}$ on $J^{1}(2,1)$ is called the \textit{1-jet space}. 
	\end{Def}

\begin{rem}
	By Darboux's theorem, the above-mentioned contact form $\omega_{0}$ satisfies the following: for each point of $J^{1}(2,1)$, there exists a local coordinate system $(U;x,y,z,p,q)$ around this point such that
	\[
		\omega_{0}|_{U}=dz-pdx-qdy.
	\]
	Hereafter, we call such a local coordinate system a \textit{canonical coordinate system}.
\end{rem}

\begin{Def}
	Let $\mathcal{J}$ be an EDS on a manifold $M$. Then the set
	\[
		\Ch(\mathcal{J}):=\{X \in TM\ |\ \iota_{X}(\mathcal{J}) \subset \mathcal{J}\}
	\]
	is called the \textit{Cauchy characteristic system} of $\mathcal{J}$. Here, $\iota_{X}$ is the interior product.
\end{Def}

\begin{rem}\label{rem:cauchy}
	In general, the Cauchy characteristic system is not a subbundle of the tangent bundle. However, it is well-known that if the Cauchy characteristic system is a subbundle of the tangent bundle, then it is a completely integrable system. Moreover, the original EDS is reduced to an EDS defined on the leaf space determined by the Cauchy characteristic system (see Theorem~\ref{thm:reduction-EDS}).
\end{rem}

\section{Properties of GMAS}

In this section, we explain some properties of the GMAE and the GMAS which used in this paper. Throughout this section, we consider the following GMAE:
\begin{flalign}\label{GMAE-1}
	\left\{
		\begin{array}{l}
			Az_{xx} + Bz_{xy}+C=0 \\
			Az_{xy}+Bz_{yy}+D=0,\\
		\end{array}
	\right.
\end{flalign}
where $z=z(x,y)$ is an unknown function and $A,B,C$ and $D$ are given functions of $x,y,z,z_{x}$ and $z_{y}$. By Example~{4.2} of \cite{KS},
the GMAE (\ref{GMAE-1}) corresponds to the GMAS generated by the following 1-forms:

\begin{flalign}\label{GMAS-1}
	\omega_{0}:=dz-pdx-qdy,\hspace{4mm}\Psi\equiv Adp+Bdq+Cdx+Ddy \mod \omega_{0}.
\end{flalign}
Here $(x,y,z,p,q)$ is a canonical coordinate system of $J^{1}(2,1)$.

Hereafter, we assume that $\Psi \not \equiv 0 \mod \omega_{0}$, because if $\Psi \equiv 0 \mod \omega_{0} $, then $\calI$ coincides with the EDS $\{\omega_{0}\}_{\mathrm{
diff}}$. Moreover, we fix a canonical coordinate system and denote this coordinate system by $(x,y,z,p,q)$.

\begin{lem}\label{lem:factor}
	For the GMAS $\calI$, there exist an open subset $U$ of $J^{1}(2,1)$ and  1-forms $\eta_{1}, \eta_{2}$ defined on $U$ such that $\omega_{0}$, $\Psi$, $\eta_{1}$ and $\eta_{2}$ are linearly independent, and $d \omega_{0} \equiv  \eta_1 \wedge \eta_2 \mod \omega_{0},\Psi$ holds. In particular, we can take $\eta_{1},\eta_{2}$ as the following:
	\begin{enumerate}[\normalfont(1)]
		\setlength{\parskip}{0mm} 
  		\setlength{\itemsep}{0mm} 
		\setlength{\leftskip}{5mm}
		\item If $A\ne 0$ then $U=\{p\in J^{1}(2,1)\ |\ A(p) \ne 0\}$ and 
		\[
			\eta_1=dy-(B/A)dx,\ \eta_2=dq+(D/A)dx.
		\]
		\item If $B \ne 0$ then $U=\{p\in J^{1}(2,1)\ |\ B(p) \ne 0\}$ and 
		\[
			\eta_1=dx-(A/B)dy,\ \eta_2=dp+(C/B)dy.
		\]
		\item If $C\ne0$ then $U=\{p\in J^{1}(2,1)\ |\ C(p) \ne 0\}$ and 
		\[
			\eta_1=dy+(B/C)dp,\ \eta_2=dq-(D/C)dp.
		\]
		\item If $D\ne0$ then $U=\{p\in J^{1}(2,1)\ |\ D(p) \ne 0\}$ and 
		\[
			\eta_1=dx+(A/D)dq,\ \eta_2=dp-(C/D)dq.
		\]
	\end{enumerate}
\end{lem}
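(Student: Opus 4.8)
The plan is to work in the open set where the relevant coefficient is nonzero and exhibit the forms $\eta_1,\eta_2$ explicitly, then verify the two required properties: linear independence of $\{\omega_0,\Psi,\eta_1,\eta_2\}$, and the congruence $d\omega_0 \equiv \eta_1 \wedge \eta_2 \bmod \omega_0,\Psi$. Since the four cases are symmetric, I would carry out case (1) in full and indicate that the others follow by the same computation with the roles of the variables permuted. Recall that $\omega_0 = dz - p\,dx - q\,dy$, so a direct computation gives
\begin{flalign*}
	d\omega_0 = -dp \wedge dx - dq \wedge dy.
\end{flalign*}

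For case (1), assume $A \ne 0$ on $U = \{A \ne 0\}$. The defining relation for $\Psi$ is $\Psi \equiv A\,dp + B\,dq + C\,dx + D\,dy \bmod \omega_0$; since $A \ne 0$ we may solve for $dp$ modulo $\omega_0$ and $\Psi$, namely $A\,dp \equiv -B\,dq - C\,dx - D\,dy \bmod \omega_0,\Psi$. The idea is to substitute this expression into $d\omega_0$. First I would write, using $dx \wedge dy = -dy \wedge dx$,
\begin{flalign*}
	d\omega_0 \equiv -dp \wedge dx - dq \wedge dy \bmod \omega_0,\Psi,
\end{flalign*}
and then replace $dp$ (wedged with $dx$) by the solved expression; the terms in $dx$ drop out since $dx \wedge dx = 0$, leaving only contributions from $dq$ and $dy$. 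After collecting terms and dividing by $A$ (legitimate on $U$), the result should factor as $\eta_1 \wedge \eta_2$ with $\eta_1 = dy - (B/A)\,dx$ and $\eta_2 = dq + (D/A)\,dx$. I expect this factorization to emerge as a clean $2\times 2$ wedge once the bookkeeping of signs is handled.

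For the linear independence claim, I would observe that $\omega_0,\Psi,\eta_1,\eta_2$ have leading terms in the coframe $\{dx,dy,dz,dp,dq\}$ that are visibly independent: $\omega_0$ involves $dz$, $\Psi$ involves $dp$ (with nonzero coefficient $A$), $\eta_1$ involves $dy$, and $\eta_2$ involves $dq$, so no nontrivial linear combination can vanish on $U$. This triangular structure makes the independence immediate in each of the four cases, with the distinguished variable ($dp$, $dp$, $dq$, $dq$ respectively) supplied by whichever coefficient is assumed nonzero.

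The only genuine obstacle is the sign and coefficient bookkeeping in the substitution step, since one must track the factor $1/A$ correctly and confirm that the residual two-form is exactly $\eta_1 \wedge \eta_2$ rather than a scalar multiple of it; a scalar multiple would still give the congruence up to rescaling $\eta_2$, but the statement asks for the precise forms listed. I would verify the constant by expanding $\eta_1 \wedge \eta_2$ directly and matching it against the reduced $d\omega_0$. Once case (1) is checked, cases (2)--(4) are obtained by the analogous solving step: in case (2) one solves for $dq$ using $B \ne 0$, and in cases (3) and (4) one solves for $dx$ or $dy$ using $C \ne 0$ or $D \ne 0$, and the same factorization argument applies \emph{mutatis mutandis}.
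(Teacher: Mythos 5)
Your proposal follows the paper's proof essentially verbatim: in the case $A\neq 0$ one solves $dp \equiv -(1/A)(B\,dq+C\,dx+D\,dy) \bmod \omega_{0},\Psi$, substitutes into $d\omega_{0} = dx\wedge dp + dy\wedge dq$, and the resulting two-form factors exactly (not merely up to a scalar) as $(dy-(B/A)dx)\wedge(dq+(D/A)dx)$, with the remaining cases handled symmetrically, just as in the paper. The only discrepancies are cosmetic: your parenthetical list of distinguished differentials for the independence check should read $(dp,dq,dx,dy)$ rather than $(dp,dp,dq,dq)$ --- consistent with your own later sentence about which differential gets solved for --- and your explicit triangular argument for the linear independence of $\omega_{0},\Psi,\eta_{1},\eta_{2}$ is in fact slightly more than the paper supplies, since the paper asserts this independence without proof.
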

\begin{proof}
	By $\Psi \equiv Adp +Bdq +Cdx+Ddy \not \equiv  0 \mod \omega_{0}$, we divide into the following four cases:
	\[
		(1)\ A\ne 0,\hspace{3mm}(2)\ B\ne 0,\hspace{3mm}(3)\ C\ne 0,\hspace{3mm}(4)\ D\ne 0.
	\]
	Since it can be proved in the same way, we have only to show the case (1). Then we have
	\[
		dp \equiv -(1/A)(Bdq+Cdx+Ddy) \mod \omega_{0}, \Psi,
	\]
	and thus, we obtain
	\begin{flalign*}
		d\omega_{0} &= dx \wedge dp + dy \wedge dq \\
		& \equiv -(1/A)dx \wedge (Bdq +Cdx+Ddy) + dy \wedge dq  \mod \omega_{0},\Psi\\
		&\equiv (dy-(B/A)dx)\wedge dq -(D/A)dx \wedge (dy-(B/A)dx)  \mod \omega_{0},\Psi\\
		& \equiv (dy-(B/A)dx) \wedge (dq+(D/A)dx) \mod \omega_{0},\Psi.
	\end{flalign*}
	Hence, we can take $U$, $\eta_{1}$ and $\eta_{2}$ as
	\[
		U:=\{p \in J^{1}(2,1)\ |\ A(p) \ne 0\},\hspace{3mm}\eta_1 := dy-(B/A)dx,\hspace{3mm} \eta_2 := dq+(D/A)dx,
	\]
	which completes the proof.
\end{proof}

\begin{lem}\label{lem:dimension-cauchy}
	For the GMAS $\calI$ and the open set $U$ in Lemma~\ref{lem:factor}, the dimension of the Cauchy characteristic system of $\calI$ is one or zero.
\end{lem}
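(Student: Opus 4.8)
The plan is to reduce the defining condition $\iota_X \calI \subset \calI$ to a linear system on $X$ by testing it against the algebraic generators of $\calI$, and then to feed the factorization from Lemma~\ref{lem:factor} into that system. Since $\calI = \{\omega_{0},\Psi,d\omega_{0},d\Psi\}_{\mathrm{alg}}$ and $\iota_X$ is an antiderivation, a vector field $X$ belongs to $\Ch(\calI)$ if and only if $\iota_X \phi \in \calI$ for each of the four generators $\phi \in \{\omega_0,\Psi,d\omega_0,d\Psi\}$. Now $\iota_X\omega_0$ and $\iota_X\Psi$ are functions, and the degree-$0$ part of the proper ideal $\calI$ is $\{0\}$, while $\iota_X d\omega_0$ and $\iota_X d\Psi$ are $1$-forms, and the degree-$1$ part of $\calI$ is exactly $\Span_{C^\infty(U)}\{\omega_0,\Psi\}$. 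Hence the four membership conditions translate into
\begin{equation*}
\iota_X \omega_0 = \iota_X \Psi = 0, \qquad \iota_X d\omega_0 \equiv \iota_X d\Psi \equiv 0 \mod \omega_0,\Psi .
\end{equation*}

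Next I would show that the first three of these conditions already force the upper bound, so that the $d\Psi$ condition is not even needed. By Lemma~\ref{lem:factor} I may write $d\omega_0 = \eta_1 \wedge \eta_2 + \omega_0 \wedge \alpha + \Psi \wedge \beta$ on $U$ for suitable $1$-forms $\alpha,\beta$. Contracting with $X$ and using $\iota_X\omega_0 = \iota_X\Psi = 0$, the Leibniz rule for the interior product kills the two correction terms modulo $\omega_0,\Psi$ and leaves
\begin{equation*}
\iota_X d\omega_0 \equiv (\iota_X \eta_1)\,\eta_2 - (\iota_X \eta_2)\,\eta_1 \mod \omega_0,\Psi .
\end{equation*}
Because $\omega_0,\Psi,\eta_1,\eta_2$ are linearly independent on $U$, the forms $\eta_1,\eta_2$ remain independent modulo $\omega_0,\Psi$, so this expression vanishes modulo $\omega_0,\Psi$ precisely when $\iota_X\eta_1 = \iota_X\eta_2 = 0$.

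Combining everything, any $X \in \Ch(\calI)$ must annihilate the four pointwise linearly independent $1$-forms $\omega_0,\Psi,\eta_1,\eta_2$. Since $J^{1}(2,1)$ is five-dimensional, their common annihilator is a line at each point of $U$, whence $\dim \Ch(\calI) \le 1$ there; the unused $d\Psi$ condition can only cut this further, giving dimension one or zero. I do not anticipate a serious obstacle here: the content is entirely the translation of Lemma~\ref{lem:factor} into two extra linear constraints. The only point requiring care is the identification of the degree-$1$ part of $\calI$ with $\Span_{C^\infty(U)}\{\omega_0,\Psi\}$ — that is, checking that the $2$-forms $d\omega_0$ and $d\Psi$ contribute nothing in degree $1$ — together with the bookkeeping of interior products modulo the ideal, both of which are routine.
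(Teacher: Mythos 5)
Your proposal is correct and follows essentially the same route as the paper: both arguments extract the conditions $\omega_{0}(X)=\Psi(X)=\eta_{1}(X)=\eta_{2}(X)=0$ from the factorization $d\omega_{0}\equiv\eta_{1}\wedge\eta_{2} \mod \omega_{0},\Psi$ of Lemma~\ref{lem:factor} and conclude that $\Ch(\calI|_{U})$ lies in the common annihilator of four pointwise independent $1$-forms on a five-dimensional manifold, the paper phrasing this via a dual frame $\{X,Y,Z_{1},Z_{2},W\}$ rather than your annihilator formulation. Your explicit justification of the degree-$0$ and degree-$1$ parts of $\calI$ is a point the paper leaves implicit, and like the paper you correctly observe that the $\iota_{X}d\Psi$ condition is not needed for the upper bound.
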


\begin{proof}
	By Lemma~\ref{lem:factor}, there exists a 1-form $\theta$ such that $\{\omega_{0},\Psi,\eta_{1},\eta_{2},\theta\}$ is a coframe of $T^{\ast}U$. Then we take the dual frame $\{X,Y,Z_{1},Z_{2},W\}$ of $\{\omega_{0},\Psi,\eta_{1},\eta_{2},\theta\}$. 
	
	For $V \in \Ch(\calI|_{U})$, we put
\[
	V=aX+bY+c_1Z_1+c_2Z_2+eW,
\]
where $a,b,c_{1},c_{2}$ and $e$ are functions on $U$. By the definition of $\Ch(\calI|_{U})$, we have $\iota_V(\omega_{0}), \iota_V(\Psi) \in \calI|_{U}$. Therefore, one has $\iota_V(\omega_{0})=\iota_V(\Psi)=0$, and thus, this yields $a=b=0$. Moreover, by $\iota_V(d\omega_{0}) \equiv 0 \mod \omega_{0},\Psi$, we obtain
\[
	\iota_V(d\omega_{0}) \equiv \iota_V(\eta_1 \wedge \eta_2) \equiv c_1\eta_2 - c_2\eta_1 \equiv 0 \mod \omega_{0},\Psi,
\]
then we have $c_1=c_2=0$. Hence it must be $V=eW$. Since $V$ is an arbitrary element of $\Ch(\calI|_{U})$, we have $\Ch(\calI|_{U}) \subset \Span\{W\}$. This completes the proof.
\end{proof}

\begin{prop}\label{prop:ch=1}
	For the GMAS $\calI$ and the open set $U$ in Lemma~\ref{lem:factor}, $\dim \Ch(\calI|_{U})\ne 0$ holds if and only if $\Psi$ satisfies $d\Psi \equiv 0 \mod \omega_{0}, d\omega_{0}, \Psi$ on $U$. 
\end{prop}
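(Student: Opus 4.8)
The plan is to reduce everything to a single test on the generator $d\Psi$, exploiting the coframe and dual frame already constructed in the proof of Lemma~\ref{lem:dimension-cauchy}. Recall that $\calI=\{\omega_{0},\Psi\}_{\mathrm{diff}}$ is generated algebraically by $\omega_{0},\Psi,d\omega_{0}$ and $d\Psi$, and that $\iota_{W}$ is an antiderivation; hence $W\in\Ch(\calI|_{U})$ if and only if $\iota_{W}$ carries each of these four forms back into $\calI$. Since $\Ch(\calI|_{U})\subset\Span\{W\}$ by Lemma~\ref{lem:dimension-cauchy} and this span is one-dimensional, the condition $\dim\Ch(\calI|_{U})\ne 0$ is equivalent to $W\in\Ch(\calI|_{U})$. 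Thus it suffices to determine exactly when $\iota_{W}$ preserves $\calI$ on the generators.

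First I would dispose of the three easy generators. Because $W$ is dual to $\theta$, one has $\omega_{0}(W)=\Psi(W)=\eta_{1}(W)=\eta_{2}(W)=0$ and $\theta(W)=1$; thus $\iota_{W}(\omega_{0})=\iota_{W}(\Psi)=0\in\calI$, and from $d\omega_{0}\equiv\eta_{1}\wedge\eta_{2}\mod\omega_{0},\Psi$ (Lemma~\ref{lem:factor}) together with $\eta_{1}(W)=\eta_{2}(W)=0$ one gets $\iota_{W}(d\omega_{0})\equiv\iota_{W}(\eta_{1}\wedge\eta_{2})=0\mod\omega_{0},\Psi$, so $\iota_{W}(d\omega_{0})\in\calI$ automatically. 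Therefore the membership $W\in\Ch(\calI|_{U})$ is controlled solely by the single condition $\iota_{W}(d\Psi)\in\calI$.

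The heart of the argument is to rewrite this last condition. Since $d\omega_{0}$ and $d\Psi$ are $2$-forms, the degree-one part of $\calI$ is exactly the $C^{\infty}(U)$-span of $\omega_{0}$ and $\Psi$; hence $\iota_{W}(d\Psi)\in\calI$ if and only if $\iota_{W}(d\Psi)\equiv 0\mod\omega_{0},\Psi$. I would then expand $d\Psi$ in the coframe modulo $\omega_{0},\Psi$, writing $d\Psi\equiv a\,\eta_{1}\wedge\eta_{2}+b\,\eta_{1}\wedge\theta+c\,\eta_{2}\wedge\theta\mod\omega_{0},\Psi$ for functions $a,b,c$ on $U$. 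Contracting with $W$ and using $\eta_{i}(W)=0$, $\theta(W)=1$ yields $\iota_{W}(d\Psi)\equiv-b\,\eta_{1}-c\,\eta_{2}\mod\omega_{0},\Psi$, which vanishes if and only if $b=c=0$ by linear independence of the coframe. Finally, $b=c=0$ says precisely $d\Psi\equiv a\,\eta_{1}\wedge\eta_{2}\mod\omega_{0},\Psi$, and since $\eta_{1}\wedge\eta_{2}\equiv d\omega_{0}\mod\omega_{0},\Psi$ by Lemma~\ref{lem:factor}, this is exactly $d\Psi\equiv 0\mod\omega_{0},d\omega_{0},\Psi$. Chaining the equivalences gives the claim.

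I expect the only genuine subtlety to be the bookkeeping between the two reductions ``$\mod\omega_{0},\Psi$'' and ``$\mod\omega_{0},d\omega_{0},\Psi$'': one must track that the $\eta_{1}\wedge\eta_{2}$ component of $d\Psi$ is harmless because it is absorbed by $d\omega_{0}$, whereas the $\eta_{i}\wedge\theta$ components are the true obstruction, since they alone produce a transverse $1$-form under $\iota_{W}$ that escapes $\calI$. Correctly identifying the degree-one part of $\calI$ is precisely what legitimizes reducing the membership $\iota_{W}(d\Psi)\in\calI$ to a pointwise linear-algebra condition on the coframe.
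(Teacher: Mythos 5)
Your proposal is correct and follows essentially the same route as the paper: both expand $d\Psi$ in the coframe $\{\omega_{0},\Psi,\eta_{1},\eta_{2},\theta\}$ modulo $\omega_{0},\Psi$, contract with the transverse direction, and identify the $\eta_{i}\wedge\theta$ components as the exact obstruction, with your reduction to the single candidate $W$ via $\Ch(\calI|_{U})\subset\Span\{W\}$ merely streamlining the two directions that the paper treats separately (arbitrary $V$ forward, $W$ backward).
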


\begin{proof}
	As in the proof of Lemma~\ref{lem:dimension-cauchy}, we take a 1-form $\theta$ and the dual frame $\{X,Y,Z_{1},Z_{2},W\}$ of $\{\omega_{0},\Psi,\eta_{1},\eta_{2},\theta\}$. Then $d\Psi$ is written by 
	\[
	d\Psi \equiv a_1 \eta_1 \wedge \theta + a_2 \eta_2 \wedge \theta + a_{3} \eta_{1} \wedge \eta_{2} \mod \omega_{0},\Psi, 
	\]
	where $a_{1}, a_{2}$ and $a_{3}$ are functions on $U$. By Lemma~\ref{lem:factor}, we remark that
\begin{flalign}\label{dPsi-structure}
	d\Psi \equiv a_1 \eta_1 \wedge \theta + a_2 \eta_2 \wedge \theta \mod \omega_{0},\Psi,d\omega_{0}.
\end{flalign}

Under the above preparation, let us assume that $\dim \Ch(\calI|_{U}) \ne 0$. Then there exists a non-zero vector field $V$ on $U$ such that $V \in \Ch(\calI|_{U})$. By the definition of Cauchy characteristic system, we have $\iota_V(\omega_{0}),\iota_V(\Psi),\iota_V(d\omega_{0}), \iota(d\Psi) \in \calI|_{U}$. Since $\omega_{0}$ and $\Psi$ are 1-forms, one can easily see that 
\[
	\omega_{0}(V)=\Psi(V)=0.
\]
Moreover, by $d\omega_{0} \equiv \eta_{1}\wedge \eta_{2} \mod \omega_{0},\Psi$ and $\iota_{V}(d\omega_{0}) \in \calI|_{U}$, we have
\[
	\eta_{1}(V)\eta_{2} -\eta_{2}(V)\eta_{1} \equiv 0 \mod \omega_{0},\Psi.
\]
Then, since $\omega_{0},\Psi,\eta_{1}$ and $\eta_{2}$ are linearly independent, one has
\[
	\eta_{1}(V)=\eta_{2}(V)=0.
\]
Therefore, by $\dim \Ch(\calI|_{U})=1$, we obtain $\theta(V) \ne 0$. On the other hand, by $\iota_{V}(d\Psi) \in \calI$ and (\ref{dPsi-structure}), one can see that
\[
	-a_1\theta(V)\eta_1 -a_2 \theta(V)\eta_{2}  \equiv 0 \mod \omega_{0},\Psi.
\]
By $\theta(V) \ne 0$, we have $a_1=a_2=0$, which indicates $d\Psi \equiv 0 \mod \omega_{0},\Psi,d\omega_{0}$ on $U$.

	Conversely, we assume $d \Psi \equiv 0 \mod \omega_{0},\Psi,d\omega_{0}$ on $U$. We here show that $W \in \Ch(\calI|_{U})$. Since $\{X,Y,Z_{1},Z_{2},W\}$ is the dual frame of $\{\omega_{0},\Psi,\eta_{1},\eta_{2},\theta\}$, then we have
	\[
		\iota_{W}(\omega_{0})=\iota_{W}(\Psi)=0,\hspace{2mm}\iota_{W}(d\omega_{0}) \equiv \iota_{W}(\eta_{1}\wedge \eta_{2}) \equiv0 \mod \omega_{0},\Psi.
	\]
Moreover, by $d\Psi \equiv 0 \mod \omega_{0},\Psi,d\omega_{0}$, we obtain $\iota_{W}(d\Psi) \equiv 0 \mod \omega_{0},\Psi$ on $U$. By the definition of the Cauchy characteristic system, this yields $W \in \Ch(\calI|_{U})$, which completes the proof.
\end{proof}

\begin{rem}\label{rem:cauchy-dimension}
	By Lemma~\ref{lem:dimension-cauchy} and Proposition~\ref{prop:ch=1}, we remark that $\dim \Ch(\calI|_{U})=1$ if and only if $d \Psi \equiv 0 \mod \omega_{0},\Psi,d\omega_{0}$.
\end{rem}

\begin{nota}\label{notation:derivative}
	Let $f\colon J^{1}(2,1) \to \R$ be a function. Then
	\begin{enumerate}[\normalfont (1)]
		\item if $A \ne 0$, we put
			\[
				f_{x,A}:=\frac{df}{dx}-\frac{C}{A}f_{p},\hspace{2mm} f_{y,A}:=\frac{df}{dy} - \frac{D}{A}f_{p},\hspace{2mm} f_{q,A}:=f_{q}-\frac{B}{A}f_{p},
			\]
		\item if $B \ne 0$, we put
			\[
				f_{x,B}:=\frac{df}{dx}-\frac{C}{B}f_{q},\hspace{2mm} f_{y,B}:=\frac{df}{dy} - \frac{D}{B}f_{q},\hspace{2mm} f_{p,B}:=f_{p}-\frac{A}{B}f_{q},
			\]
		\item if $C \ne 0$, we put
			\[
				f_{y,C}:=\frac{df}{dy}-\frac{D}{C}\frac{df}{dx},\hspace{2mm} f_{p,C}:=f_{p} - \frac{A}{C}\frac{df}{dx},\hspace{2mm} f_{q,C}:=f_{q}-\frac{B}{C}\frac{df}{dx},
			\]
		\item if $D \ne 0$, we put
			\[
				f_{x,D}:=\frac{df}{dx}-\frac{C}{D}\frac{df}{dy},\hspace{2mm} f_{p,D}:=f_{p} - \frac{A}{D}\frac{df}{dy},\hspace{2mm} f_{q,D}:=f_{q}-\frac{B}{D}\frac{df}{dy}.
			\]
	\end{enumerate}
	Here, $d/dx$ and $d/dy$ are defined as follows:
	\[
		\frac{d}{dx}:=\frac{\DD}{\DD x} + p\frac{\DD}{\DD z},\hspace{5mm}\frac{d}{dy}:=\frac{\DD}{\DD x}+q\frac{\DD}{\DD z}.
	\]
\end{nota}

\begin{lem}\label{lem:calculate-df}
	Let $f \colon J^{1}(2,1) \to \R$ be a function. Then the following holds:
	\begin{enumerate}[\normalfont (1)]
		\item If $A \ne 0$, then $df \equiv f_{x,A}dx + f_{y,A}dy + f_{q,A}dq \mod \omega_{0},\Psi$.
		\item If $B \ne 0$, then $df \equiv f_{x,B}dx + f_{y,B}dy + f_{p,B}dp \mod \omega_{0},\Psi$.
		\item If $C \ne 0$, then $df \equiv f_{y,C}dy + f_{p,C}dp + f_{q,C}dq \mod \omega_{0},\Psi$. 
		\item If $D \ne 0$, then $df \equiv f_{x,D}dx + f_{p,D}dp + f_{q,D}dq \mod \omega_{0},\Psi$.
	\end{enumerate}
\end{lem}

\begin{proof}
	For the case of (2) or (4), by applying the following contact transformation to $\calI$, we arrive at the case of (1) or (3).
	\[
		J^{1}(2,1) \to J^{1}(2,1); (x,y,z,p,q) \mapsto (y,x,z,q,p).
	\]
	Therefore, we prove only cases (1) and (3), but only (1) will be shown because (3) can be proved by the same calculation as (1). 
	
	By $A \ne 0$, we have
	\begin{flalign*}
		dp \equiv -(1/A)(Bdq + Cdx + Ddy) \mod \omega_{0},\Psi.
	\end{flalign*}
	We remark that
	\begin{flalign*}
		dz \equiv pdx+qdy \mod \omega_{0}.
	\end{flalign*}
	Therefore, one has
	\begin{flalign*}
		df &=f_{x}dx+f_{y}dy+f_{z}dz+f_{p}dp+f_{q}dq \\
		&\equiv f_{x}dx+f_{y}dy+f_{z}(pdx+qdy) \\
		&\hspace{20mm}-(1/A)f_{p}(Bdq + Cdx + Ddy)+f_{q}dq \mod \omega_{0},\Psi \\
		&\equiv f_{x,A}dx + f_{y,A}dy + f_{q,A}dq \mod \omega_{0},\Psi.
	\end{flalign*}
	This completes the proof.
\end{proof}

\begin{prop}\label{prop:Cauchy-dim1-criterion}
	Let $\calI$ be the GMAS generated by $\omega_{0}$ and $\Psi$, where $\Psi \equiv Adp+Bdq+Cdx+Ddy \mod \omega_{0}$, and $U$ be the open set mentioned in Lemma~\ref{lem:factor}. For a point $p_{0} \in U$, the followings hold:
	\begin{enumerate}[\normalfont (1)]
		\item let us assume $A \ne 0$. Then $\dim \Ch(\calI)=1$ at $p_{0}$ if and only if the functions $A, B, C$ and $D$ satisfy the following condition at $p_{0}$:
			{\small
			\begin{flalign*}
				\left\{
					\begin{array}{l}
						\displaystyle B_{x,A}-C_{q,A}-\frac{1}{A}(A_{x,A}B-CA_{q,A})+\frac{B}{A}\left(B_{y,A}-D_{q,A}-\frac{1}{A}(A_{y,A}B-DA_{q,A})\right)\\
						\hspace{110mm}=0\\
						\displaystyle D_{x,A}-C_{y,A}-\frac{1}{A}(A_{x,A}D-A_{y,A}C)+\frac{D}{A}\left(B_{y,A}-D_{q,A}-\frac{1}{A}(A_{y,A}B-DA_{q,A})\right) \\
						\hspace{110mm}=0.
					\end{array}
				\right.
			\end{flalign*}
			}
			
			\noindent
			Conversely, $\dim \Ch(\calI)=0$ at $p_{0}$ if and only if the above condition does not hold at $p_{0}$.
		\item let us assume $B \ne 0$. Then $\dim \Ch(\calI)=1$ at $p_{0}$ if and only if the functions $A, B, C$ and $D$ satisfy the following condition at $p_{0}$:
		{\small
			\begin{flalign*}
				\left\{
					\begin{array}{l}
						\displaystyle D_{x,B}-C_{y,B}-\frac{1}{B}(B_{x,B}D-B_{y,B}C)-\frac{C}{B}\left(A_{x,B}-C_{p,B}-\frac{1}{B}(B_{x,B}A-CB_{p,B})\right) \\
						\hspace{110mm}=0\\
						\displaystyle A_{y,B}-D_{p,B}-\frac{1}{B}(B_{y,B}A-DB_{p,B})+\frac{A}{B}\left(A_{x,B}-C_{p,B}-\frac{1}{B}(B_{x,B}A-CB_{p,B})\right)\\
						\hspace{110mm}=0.\\
					\end{array}
				\right.
			\end{flalign*}
		}
		
		\noindent
		Conversely, $\dim \Ch(\calI)=0$ at $p_{0}$ if and only if the above condition does not hold at $p_{0}$.
		\item let us assume $C \ne 0$. Then $\dim \Ch(\calI)=1$ at $p_{0}$ if and only if the functions $A, B, C$ and $D$ satisfy the following condition at $p_{0}$:
		{\small
			\begin{flalign*}
				\left\{
					\begin{array}{l}
						\displaystyle A_{y,C}-D_{p,C}-\frac{1}{C}(C_{y,C}A-C_{p,C}D)+\frac{D}{C}\left(B_{y,C}-C_{q,C}-\frac{1}{C}(C_{y,C}B-C_{q,C}D)\right)\\
						\hspace{110mm}=0\\
						\displaystyle B_{p,C}-A_{q,C}-\frac{1}{C}(C_{p,C}B-C_{q,C}A)-\frac{B}{C}\left(B_{y,C}-C_{q,C}-\frac{1}{C}(C_{y,C}B-C_{q,C}D)\right)\\
						\hspace{110mm}=0.\\
					\end{array}
				\right.
			\end{flalign*}
		}
		
		\noindent
		Conversely, $\dim \Ch(\calI)=0$ at $p_{0}$ if and only if the above condition does not hold at $p_{0}$.
		\item let us assume $D \ne 0$. Then $\dim \Ch(\calI)=1$ at $p_{0}$ if and only if the functions $A, B, C$ and $D$ satisfy the following condition at $p_{0}$:
		{\small
			\begin{flalign*}
				\left\{
					\begin{array}{l}
						\displaystyle B_{x,D}-C_{q,D}-\frac{1}{D}(D_{x,D}B-D_{q,D}C)+\frac{C}{D}\left(A_{x,D}-C_{p,D}-\frac{1}{D}(D_{x,D}A-D_{p,D}C)\right)\\
						\hspace{110mm}=0\\
						\displaystyle B_{p,D}-A_{q,D}-\frac{1}{D}(D_{p,D}B-D_{q,D}A)+\frac{A}{D}\left(A_{x,D}-C_{p,D}-\frac{1}{D}(D_{x,D}A-D_{p,D}C)\right)\\
						\hspace{110mm}=0.
					\end{array}
				\right.
			\end{flalign*}
		}
		
		\noindent
		Conversely, $\dim \Ch(\calI)=0$ at $p_{0}$ if and only if the above condition does not hold at $p_{0}$.
	\end{enumerate}
\end{prop}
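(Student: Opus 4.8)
The plan is to convert the criterion of Remark~\ref{rem:cauchy-dimension} into the stated polynomial conditions by expanding $d\Psi$ in the coframe adapted to each case. Since Lemma~\ref{lem:dimension-cauchy} guarantees $\dim\Ch(\calI|_U)\in\{0,1\}$, it suffices to prove that, at the point $p_0$, the condition $d\Psi\equiv 0\mod\omega_0,\Psi,d\omega_0$ is equivalent to the displayed system of two equations; the assertion about $\dim\Ch(\calI)=0$ is then exactly its negation. Accordingly, by Remark~\ref{rem:cauchy-dimension} I only need to compute $d\Psi$ modulo $\omega_0,\Psi,d\omega_0$ and read off when it vanishes. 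I would establish case (1) in full and deduce the remaining three from it.

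For case (1), write a representative $\Psi=Adp+Bdq+Cdx+Ddy+E\omega_0$; differentiating and discarding the terms $dE\wedge\omega_0$ and $E\,d\omega_0$ gives
\[
	d\Psi\equiv dA\wedge dp+dB\wedge dq+dC\wedge dx+dD\wedge dy\mod\omega_0,\Psi,d\omega_0,
\]
so the choice of representative is irrelevant. Using Lemma~\ref{lem:calculate-df}(1) to rewrite each of $dA,dB,dC,dD$ in terms of $dx,dy,dq$ and using $dp\equiv-(1/A)(Bdq+Cdx+Ddy)\mod\omega_0,\Psi$ to eliminate $dp$, I would expand the four wedge products and collect the coefficients of $dx\wedge dy$, $dx\wedge dq$ and $dy\wedge dq$. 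This gives $d\Psi\equiv P\,dx\wedge dy+Q\,dx\wedge dq+R\,dy\wedge dq$, where
\[
	P=D_{x,A}-C_{y,A}-\frac1A(A_{x,A}D-A_{y,A}C),\quad Q=B_{x,A}-C_{q,A}-\frac1A(A_{x,A}B-CA_{q,A}),
\]
\[
	R=B_{y,A}-D_{q,A}-\frac1A(A_{y,A}B-DA_{q,A}).
\]

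It remains to reduce modulo $d\omega_0$. From Lemma~\ref{lem:factor}(1) we have $d\omega_0\equiv(dy-(B/A)dx)\wedge(dq+(D/A)dx)\mod\omega_0,\Psi$, and expanding the right-hand side yields the single relation $dy\wedge dq\equiv(D/A)dx\wedge dy+(B/A)dx\wedge dq\mod\omega_0,\Psi,d\omega_0$. Substituting this into the expression for $d\Psi$ leaves
\[
	d\Psi\equiv\Big(P+\frac{D}{A}R\Big)dx\wedge dy+\Big(Q+\frac{B}{A}R\Big)dx\wedge dq\mod\omega_0,\Psi,d\omega_0.
\]
Since $dx\wedge dy$ and $dx\wedge dq$ are linearly independent in the relevant quotient, $d\Psi\equiv 0$ holds at $p_0$ precisely when $Q+(B/A)R=0$ and $P+(D/A)R=0$; substituting the expressions for $P,Q,R$ reproduces exactly the two equations of case (1). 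Together with the first paragraph this settles case (1).

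For cases (2)--(4) I would proceed as in the proof of Lemma~\ref{lem:calculate-df}. Case (2) follows from case (1) under the contact transformation $(x,y,z,p,q)\mapsto(y,x,z,q,p)$, which sends $(A,B,C,D)$ to $(B,A,D,C)$ and transforms the operators of Notation~\ref{notation:derivative} by $f_{x,A}\mapsto f_{y,B}$, $f_{y,A}\mapsto f_{x,B}$, $f_{q,A}\mapsto f_{p,B}$; applying it to the two equations of case (1) reproduces those of case (2). Cases (3) and (4) follow in the same way from a partial Legendre-type contact transformation interchanging the roles of $dx$ and $dp$ (resp.\ $dy$ and $dq$), which relates case (1) to case (3) (sending $(A,B,C,D)$ to $(-C,B,A,D)$) and case (2) to case (4); alternatively they are obtained by the identical computation starting from Lemma~\ref{lem:calculate-df}(3),(4) and Lemma~\ref{lem:factor}(3),(4), now eliminating $dx$ (resp.\ $dy$) and using the $\eta_1\wedge\eta_2$ relation to remove the surviving mixed $2$-form. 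The only real obstacle is computational: controlling the signs in the wedge expansion of $d\Psi$ and checking that the $d\omega_0$-relation recombines the $dy\wedge dq$ coefficient $R$ with exactly the factors $B/A$ and $D/A$; once case (1) is confirmed, the transformation dictionary makes the bookkeeping for the other three cases routine.
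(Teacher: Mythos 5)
Your proposal is correct and takes essentially the same route as the paper's proof: both reduce the problem via Remark~\ref{rem:cauchy-dimension} to the condition $d\Psi\equiv 0 \mod \omega_{0},\Psi,d\omega_{0}$, expand $d\Psi$ in case (1) using Lemma~\ref{lem:calculate-df}(1) together with $dp\equiv-(1/A)(Bdq+Cdx+Ddy)$, use the $d\omega_{0}$-relation from Lemma~\ref{lem:factor}(1) to recombine the $dy\wedge dq$ coefficient with the factors $B/A$ and $D/A$, and read off the two vanishing conditions (your $P,Q,R$ match the paper's expansion exactly), with cases (2)--(4) handled by the swap $(x,y,z,p,q)\mapsto(y,x,z,q,p)$ and analogous computation just as the paper indicates. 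Your extra touches --- verifying independence of the representative of $\Psi$ and spelling out the transformation dictionary (including the Legendre-type reduction for case (3)) --- are harmless refinements of, not departures from, the paper's argument.
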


\begin{proof}
	As in the case of Lemma~\ref{lem:calculate-df}, we show only the case of (1). Let us assume $\dim\Ch(\calI|_{U}) =1$. By our assumption and Remark~\ref{rem:cauchy-dimension}, we have
	\begin{flalign}\label{dPsi-0}
		d\Psi \equiv dA \wedge dp + dB\wedge dq + dC\wedge dx + dD\wedge dy \equiv 0 \mod \omega_{0}, \Psi, d\omega_{0}.
	\end{flalign}
	Then one has
	\begin{flalign}
		\label{dp}dp &\equiv -(1/A)(Bdq+Cdx+Ddy) \mod \Psi.
	\end{flalign}
	By using Lemma~\ref{lem:calculate-df} (1), we obtain
	\begin{flalign}
		\label{dA}dA &\equiv A_{x,A}dx + A_{y,A}dy+A_{q,A}dq \mod \omega_{0},\Psi,\\
		\label{dB}dB &\equiv B_{x,A}dx + B_{y,A}dy+B_{q,A}dq \mod \omega_{0},\Psi,\\
		\label{dC}dC &\equiv C_{x,A}dx + C_{y,A}dy+C_{q,A}dq \mod \omega_{0},\Psi,\\
	\label{dD}dD &\equiv D_{x,A}dx + D_{y,A}dy+D_{q,A}dq \mod \omega_{0},\Psi.
	\end{flalign}
	Therefore, by substituting $(\ref{dp}),\ldots, (\ref{dD})$ into (\ref{dPsi-0}), we have
	\begin{flalign*}
		0 &\equiv d\Psi \mod \omega_{0},d\omega_{0},\Psi\\
		 &\equiv  (A_{x,A}dx + A_{y,A}dy+A_{q,A}dq) \wedge dp + (B_{x,A}dx + B_{y,A}dy) \wedge dq \\
		&\hspace{4mm}+( C_{y,A}dy+C_{q,A}dq) \wedge dx + (D_{x,A}dx +D_{q,A}dq) \wedge dy \mod \omega_{0},\Psi,d\omega_{0} \\
		&\equiv (-(1/A)(A_{x,A}B-CA_{q,A})+B_{x,A}-C_{q,A}) dx \wedge dq\\
		&\hspace{4mm}+(-(1/A)(A_{y,A}B-DA_{q,A})+B_{y,A}-D_{q,A})dy\wedge dq \\
		&\hspace{4mm}+(-(1/A)(A_{x,A}D-A_{y,A}C)-C_{y,A}+D_{x,A})dx\wedge dy \mod \omega_{0},\Psi,d\omega_{0}\\
		&\equiv \mbox{$(B_{x,A}-C_{q,A}-(1/A)(A_{x,A}B-CA_{q,A})$}\\
		&\hspace{4mm}\mbox{$+(B/A)\left(B_{y,A}-D_{q,A}-(1/A)(A_{y,A}B-DA_{q,A})\right))dx\wedge dq$} \\
		&\hspace{7mm}\mbox{$+(D_{x,A}-C_{y,A}-(1/A)(A_{x,A}D-A_{y,A}C)$} \\
		&\hspace{5mm}\mbox{$+(D/A)\left(B_{y,A}-D_{q,A}-(1/A)(A_{y,A}B-DA_{q,A})\right)$})dx\wedge dy \mod \omega_{0},\Psi,d\omega_{0}.
	\end{flalign*}
	Since $\omega_{0}\wedge \Psi,d\omega_{0},dx\wedge dq,dx\wedge dy$ are linearly independent, this yields that
	{\small
			\begin{flalign*}
				\left\{
					\begin{array}{l}
						\displaystyle B_{x,A}-C_{q,A}-\frac{1}{A}(A_{x,A}B-CA_{q,A})+\frac{B}{A}\left(B_{y,A}-D_{q,A}-\frac{1}{A}(A_{y,A}B-DA_{q,A})\right)\\
						\hspace{110mm}=0\\
						\displaystyle D_{x,A}-C_{y,A}-\frac{1}{A}(A_{x,A}D-A_{y,A}C)+\frac{D}{A}\left(B_{y,A}-D_{q,A}-\frac{1}{A}(A_{y,A}B-DA_{q,A})\right)\\ 
						\hspace{110mm}=0.
					\end{array}
				\right.
			\end{flalign*}
			}
	The converse is also shown by direct calculations, which completes the proof.
\end{proof}

\section{Properties of the GMAE of involutive homogeneous type}
In previous section, we consider some properties of the GMAE (\ref{GMAE-1}) from the view point of exterior differential systems. Among others, the equations (\ref{GMAE-1}) which the second order terms do not vanish especially are important. Therefore, we consider the following GMAE:
\begin{flalign}\label{ODS1}
	\left\{
		\begin{array}{l}
			z_{xx}=\alpha(x,y,z,z_{x},z_{y}) z_{xy}\\
			z_{xy}=\alpha(x,y,z,z_{x},z_{y})z_{yy}.
			\end{array}
	\right.
\end{flalign}
Let $(x,y,z,p,q)$ be a canonical coordinate system around some point of $J^{1}(2,1)$. Then, the GMAS $\calI$ corresponding to (\ref{ODS1}) is generated by the following 1-forms on that coordinate:
	\[
		\omega_{0}:=dz-pdx-qdy,\hspace{2mm}\Psi\equiv dp-\alpha(x,y,z,p,q) dq \mod \omega_{0}.
	\]
Hereafter, we consider some geometric properties of this GMAS $\calI$. Firstly, we give a vanishing condition for $\Ch(\calI)$.

\begin{prop}\label{prop:cauchy-alpha}
	For the GMAS $\calI$ and $p_{0} \in J^{1}(2,1)$, the followings hold:
	\begin{enumerate}[\normalfont (1)]
		\item the dimension of $\Ch(\calI)$ is one at $p_{0}$ if and only if the function $\alpha$ satisfies
		\[
			\alpha_{x}+p\alpha_{z}-\alpha(\alpha_{y}+q\alpha_{z})=0 \hspace{4mm}\text{at $p_{0}$}.
		\]
		\item the dimension of $\Ch(\calI)$ is zero at $p_{0}$ if and only if the function $\alpha$ satisfies
		\[
			\alpha_{x}+p\alpha_{z}-\alpha(\alpha_{y}+q\alpha_{z}) \ne 0 \hspace{4mm}\text{at $p_{0}$}.
		\]
	\end{enumerate}
\end{prop}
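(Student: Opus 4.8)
The plan is to recognize (\ref{ODS1}) as the special case of (\ref{GMAE-1}) in which $A\equiv 1$, $B=-\alpha$ and $C=D=0$. Since $A\equiv 1\neq 0$ everywhere, Lemma~\ref{lem:factor}(1) applies with $U=J^{1}(2,1)$, and we may take $\eta_{1}=dy+\alpha\,dx$ and $\eta_{2}=dq$. By Lemma~\ref{lem:dimension-cauchy} the dimension of $\Ch(\calI)$ is always $0$ or $1$, so statements (1) and (2) are complementary and it suffices to establish (1). By Remark~\ref{rem:cauchy-dimension}, $\dim\Ch(\calI)=1$ at $p_{0}$ is equivalent to $d\Psi\equiv 0 \mod \omega_{0},\Psi,d\omega_{0}$ at $p_{0}$, so the whole statement reduces to computing $d\Psi$ modulo this ideal.

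Next I would carry out that computation directly. Taking $\Psi=dp-\alpha\,dq$ as a representative, we get $d\Psi=-d\alpha\wedge dq$. Using $dz\equiv p\,dx+q\,dy \mod \omega_{0}$ and $dp\equiv \alpha\,dq \mod \Psi$ to reduce $d\alpha=\alpha_{x}dx+\alpha_{y}dy+\alpha_{z}dz+\alpha_{p}dp+\alpha_{q}dq$, every $dq\wedge dq$ term drops out and one obtains
\[
	d\Psi\equiv -\bigl((\alpha_{x}+p\alpha_{z})\,dx+(\alpha_{y}+q\alpha_{z})\,dy\bigr)\wedge dq \mod \omega_{0},\Psi.
\]
Then I would reduce further modulo $d\omega_{0}$. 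Since $d\omega_{0}\equiv \eta_{1}\wedge\eta_{2}=(dy+\alpha\,dx)\wedge dq \mod \omega_{0},\Psi$, we have $dy\wedge dq\equiv -\alpha\,dx\wedge dq \mod \omega_{0},\Psi,d\omega_{0}$, whence
\[
	d\Psi\equiv \bigl(\alpha(\alpha_{y}+q\alpha_{z})-(\alpha_{x}+p\alpha_{z})\bigr)\,dx\wedge dq \mod \omega_{0},\Psi,d\omega_{0}.
\]

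The final step is to observe that $dx\wedge dq$ is nonzero in the relevant quotient, so that the vanishing of $d\Psi$ is exactly the vanishing of its coefficient. Completing $\{\omega_{0},\Psi,\eta_{1},\eta_{2}\}$ to a coframe by $\theta:=dx$, the space of $2$-forms modulo $\omega_{0},\Psi,d\omega_{0}$ is spanned by $\eta_{1}\wedge\theta$ and $\eta_{2}\wedge\theta$, and $dx\wedge dq=-\eta_{2}\wedge\theta$ is one of these basis elements. Hence $d\Psi\equiv 0 \mod \omega_{0},\Psi,d\omega_{0}$ holds at $p_{0}$ if and only if $\alpha_{x}+p\alpha_{z}-\alpha(\alpha_{y}+q\alpha_{z})=0$ at $p_{0}$, which proves (1); statement (2) then follows from Lemma~\ref{lem:dimension-cauchy}. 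The only delicate point is this bookkeeping of the successive reductions modulo $\omega_{0},\Psi,d\omega_{0}$ and confirming that $dx\wedge dq$ survives in the quotient; everything else is routine. As a cross-check, the same result drops out of Proposition~\ref{prop:Cauchy-dim1-criterion}(1) upon substituting $A=1$, $B=-\alpha$, $C=D=0$, where the second equation becomes trivially $0=0$ and the first reduces to the displayed condition.
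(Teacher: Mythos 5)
Your proposal is correct, but your main line of argument differs from the paper's. The paper's entire proof of this proposition is a one-line specialization: it substitutes $A=1$, $B=-\alpha$, $C=D=0$ into the general criterion of Proposition~\ref{prop:Cauchy-dim1-criterion}(1), using Notation~\ref{notation:derivative}(1) to unwind the expressions (the second equation of the system degenerates to $0=0$, exactly as you note). What you present as a ``cross-check'' at the end is in fact the paper's whole proof. Your primary route instead redoes, in this special case, the computation underlying Proposition~\ref{prop:Cauchy-dim1-criterion}: invoking Remark~\ref{rem:cauchy-dimension} to reduce the statement to $d\Psi\equiv 0 \mod \omega_{0},\Psi,d\omega_{0}$, computing $d\Psi=-d\alpha\wedge dq$ with the reductions $dz\equiv p\,dx+q\,dy$ and $dp\equiv\alpha\,dq$, and eliminating $dy\wedge dq$ via $d\omega_{0}\equiv(dy+\alpha\,dx)\wedge dq$. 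I checked the computation and it is right, including the step you correctly flag as delicate: with $\theta=dx$ completing the coframe $\{\omega_{0},\Psi,\eta_{1},\eta_{2},\theta\}$, the degree-two part of the quotient is spanned by $\eta_{1}\wedge\theta$ and $\eta_{2}\wedge\theta$ (this is precisely the structure equation (\ref{dPsi-structure}) in the paper), and $dx\wedge dq=-\eta_{2}\wedge\theta$ is indeed a basis element, so vanishing of $d\Psi$ in the quotient is equivalent to vanishing of the coefficient $\alpha_{x}+p\alpha_{z}-\alpha(\alpha_{y}+q\alpha_{z})$. Your appeal to Lemma~\ref{lem:dimension-cauchy} to make (1) and (2) complementary (valid here since $A\equiv 1$ forces $U=J^{1}(2,1)$) matches the paper's implicit logic. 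What each approach buys: the paper's substitution is shorter once the heavy general Proposition~\ref{prop:Cauchy-dim1-criterion} is in hand, while your direct computation is self-contained, avoids unwinding the cumbersome notation $f_{x,A}$, $f_{y,A}$, $f_{q,A}$, and makes visible where each term of the condition comes from.
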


\begin{proof}
	By $A=1, B=-\alpha, C=D=0$, Proposition~\ref{prop:Cauchy-dim1-criterion} (1) and Notation~\ref{notation:derivative} (1), this proposition is proved immediately.
	\end{proof}

\begin{rem}
	There exists a function which satisfies the following at single point, but does not satisfy locally:
	\begin{flalign}\label{condition-alpha1}
		\alpha_{x}+p\alpha_{z}-\alpha(\alpha_{y}+q\alpha_{z})=0.
	\end{flalign}
	In the present paper, we do not deal with such functions (cf. Remark~\ref{rem:cauchy}).
\end{rem}

\begin{Def}
	The GMAE and GMAS determined by the function $\alpha$ satisfying (\ref{condition-alpha1}) locally are called \textit{involutive type}.
\end{Def}

\begin{ex}\label{ex:involutive-function}
	Let $f$ be a function of three independent variables, $g$ be a function of two independent variables and $C$ be a constant number. If $\alpha$ is one of the following three functions
	\[
		f(z-px-qy,p,q),\hspace{4mm}\frac{g(p,q)-y}{x},\hspace{4mm} -\frac{z}{qx} + \frac{C}{x} + \frac{p}{q},
	\]
	then the GMAS $\calI$ is of involutive type, that is, these functions are solutions of (\ref{condition-alpha1}).
\end{ex}

\subsection{First integrals of Cauchy characteristics}
In this subsection, we find first integrals of $\Ch(\calI)$. 

\begin{lem}\label{lem:definition-form-Ch}
	For the Cauchy characteristic system $\Ch(\calI)$, if $\dim \Ch(\calI)=1$ then the following holds:
	\[
		\Ch(\calI)=\{dy+\alpha dx=\omega_{0}=\Psi=dq=0\}.
	\]
\end{lem}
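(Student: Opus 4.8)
The plan is to compute $\Ch(\calI)$ explicitly by specializing the general analysis of Lemma~\ref{lem:dimension-cauchy} to the present GMAS, where $A=1$, $B=-\alpha$, $C=D=0$. First I would recall from Lemma~\ref{lem:factor}~(1) that, since $A=1\ne 0$, we may take the factorization $d\omega_{0}\equiv \eta_{1}\wedge\eta_{2}\bmod \omega_{0},\Psi$ with
\[
\eta_{1}=dy-(B/A)dx=dy+\alpha\,dx,\qquad \eta_{2}=dq+(D/A)dx=dq.
\]
Thus the four $1$-forms $\omega_{0}$, $\Psi$, $\eta_{1}=dy+\alpha\,dx$ and $\eta_{2}=dq$ are linearly independent on $U=J^{1}(2,1)$ (here $U$ is all of $J^{1}(2,1)$ because $A\equiv 1$), and I may complete them to a coframe by a fifth $1$-form $\theta$ exactly as in the proof of Lemma~\ref{lem:dimension-cauchy}.

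The heart of the argument reuses that proof almost verbatim. Taking the dual frame $\{X,Y,Z_{1},Z_{2},W\}$ and writing an arbitrary $V\in\Ch(\calI)$ as $V=aX+bY+c_{1}Z_{1}+c_{2}Z_{2}+eW$, the conditions $\iota_{V}(\omega_{0})=\iota_{V}(\Psi)=0$ force $a=b=0$, and the condition $\iota_{V}(d\omega_{0})\equiv c_{1}\eta_{2}-c_{2}\eta_{1}\equiv 0\bmod\omega_{0},\Psi$ forces $c_{1}=c_{2}=0$, so that $\Ch(\calI)\subset\Span\{W\}$. Under the standing hypothesis $\dim\Ch(\calI)=1$ this inclusion is an equality, i.e.\ $\Ch(\calI)=\Span\{W\}$. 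Since $W$ is by construction the dual vector to $\theta$ in the coframe $\{\omega_{0},\Psi,\eta_{1},\eta_{2},\theta\}$, it is annihilated by each of $\omega_{0},\Psi,\eta_{1}=dy+\alpha\,dx$ and $\eta_{2}=dq$; conversely a vector annihilated by all four of these independent $1$-forms is a scalar multiple of $W$. Hence
\[
\Ch(\calI)=\{\,V\mid \omega_{0}(V)=\Psi(V)=(dy+\alpha\,dx)(V)=dq(V)=0\,\},
\]
which is exactly the claimed description $\Ch(\calI)=\{dy+\alpha\,dx=\omega_{0}=\Psi=dq=0\}$.

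The only point needing a little care, rather than genuine difficulty, is the translation between the intrinsic characterization (a line field in $TJ^{1}(2,1)$) and the annihilator description in the statement: one must check that the line $\Span\{W\}$ coincides with the common kernel of the four $1$-forms, which is immediate once $\{\omega_{0},\Psi,\eta_{1},\eta_{2}\}$ are known to be linearly independent. I would also remark that Proposition~\ref{prop:cauchy-alpha} guarantees the hypothesis $\dim\Ch(\calI)=1$ holds precisely when $\alpha_{x}+p\alpha_{z}-\alpha(\alpha_{y}+q\alpha_{z})=0$, so the lemma applies exactly on the involutive-type locus; no separate nondegeneracy computation is required here, since it was already carried out in the cited results.
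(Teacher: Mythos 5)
Your proposal is correct, and it shares the paper's main scaffolding: the factorization $d\omega_{0}\equiv(dy+\alpha\,dx)\wedge dq \bmod \omega_{0},\Psi$ from Lemma~\ref{lem:factor}~(1) with $A=1$, $B=-\alpha$, $C=D=0$, the completion to a coframe $\{\omega_{0},\Psi,\eta_{1},\eta_{2},\theta\}$, and the dual vector $W$. Where you genuinely diverge is in which inclusion you establish. The paper proves $W\in\Ch(\calI)$ directly, and for that it must first invoke Remark~\ref{rem:cauchy-dimension} to get $d\Psi\equiv 0 \bmod \omega_{0},\Psi,d\omega_{0}$, so that $\calI=\{\omega_{0},\Psi,d\omega_{0}\}_{\mathrm{alg}}$ and the condition $\iota_{W}(d\Psi)\in\calI$ comes for free; equality then follows from $\dim\Ch(\calI)=1$. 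You instead rerun the computation from Lemma~\ref{lem:dimension-cauchy} to get the opposite inclusion $\Ch(\calI)\subset\Span\{W\}$ --- which needs only the interior products with $\omega_{0}$, $\Psi$ and $d\omega_{0}$, never with $d\Psi$ --- and then conclude equality from the dimension hypothesis. Your route is slightly leaner: it uses $\dim\Ch(\calI)=1$ only once and never touches $d\Psi$ or the reduction of $\calI$ to $\{\omega_{0},\Psi,d\omega_{0}\}_{\mathrm{alg}}$; the paper's route, by contrast, makes that reduction explicit, which is consistent with how the involutivity condition is used elsewhere in Section~4. Both arguments are sound, and your closing observation that the annihilator of the four independent $1$-forms is exactly $\Span\{W\}$ correctly supplies the translation between the line field and the description $\{dy+\alpha\,dx=\omega_{0}=\Psi=dq=0\}$ in the statement.
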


\begin{proof}
	Since the dimension of $\Ch(\calI)$ is one and Remark~\ref{rem:cauchy-dimension}, we have $d \Psi \equiv 0 \mod \omega_{0},d\omega_{0},\Psi$. This yields that
	\[
		\calI =\{\omega_{0},\Psi\}_{\mathrm{diff}} =\{\omega_{0},\Psi,d\omega_{0},d\Psi\}_{\mathrm{alg}}=\{\omega_{0},\Psi,d\omega_{0}\}_{\mathrm{alg}}.
	\]
	By Lemma~\ref{lem:factor} (1), we have
	\[
		d \omega_{0} \equiv (dy + \alpha dx) \wedge dq  \mod \omega_{0},\Psi,
	\]
	and $\omega_{0},\Psi,dy+\alpha dx$ and $dq$ are linearly independent. Then there exists a 1-form $\theta$ such that $\{\omega_{0},\Psi,dy+\alpha dx,dq,\theta\}$ is a coframe of $T^{\ast}J^{1}(2,1)$. Here we denote the dual frame of this coframe by $\{X,Y,Z_{1},Z_{2},W\}$. Then, since $W$ satisfies
	\[
		\omega_{0}(W)=\Psi(W)=(dy+\alpha dx)(W)=dq(W)=0,
	\] 
	we have $W \in \Ch(\calI)$. Hence, by $\dim \Ch(\calI)=1$, one has
	\[
		\Ch(\calI)=\Span\{W\}=\{ dy+\alpha dx =\omega_{0}=\Psi = dq =0\},
	\]
	which completes the proof.
\end{proof}

We next consider independent first integrals of $\Ch(\calI)$

\begin{prop}\label{prop:structure-Ch}
	For the Cauchy characteristics of $\calI$ and $p_{0} \in J^{1}(2,1)$, the followings hold:
	\begin{enumerate}[\normalfont (1)]
		\setlength{\parskip}{0mm} 
  		\setlength{\itemsep}{0mm} 
		\setlength{\leftskip}{0mm}
		\item \label{case1}
		 if $\alpha$ satisfies $1+x(\alpha_{y}+q\alpha_{z}) \ne0$ at $p_{0}$, then
		\[
			\Ch(\calI)=\{d(y+\alpha x)=d(z-px-qy)=dp=dq=0\} \hspace{4mm} \text{at $p_{0}$}.
		\]
		Namely, $y+\alpha x$, $z-px-qy$, $p$ and $q$ are independent first integrals of $\Ch(\calI)$.
		\item \label{case2}
			if $\alpha$ satisfies $1+x(\alpha_{y}+q\alpha_{z})=0$ at $p_{0}$, then 
		\[
			\Ch(\calI)=\{d\alpha=d(z-px-qy)=dp=dq=0\}\hspace{4mm} \text{at $p_{0}$}.
		\]
		Namely, $\alpha$, $z-px-qy$, $p$ and $q$ are independent first integrals of $\Ch(\calI)$.
	\end{enumerate}
\end{prop}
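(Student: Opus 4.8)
The plan is to build on Lemma~\ref{lem:definition-form-Ch}, which identifies $\Ch(\calI)$ as the common kernel of the four linearly independent $1$-forms $\omega_{0}$, $\Psi$, $dy+\alpha dx$ and $dq$. Since $\dim\Ch(\calI)=1$, writing $W$ for a spanning vector field, a function $f$ is a first integral precisely when $df(W)=0$; as the four forms above are independent and annihilate $W$, this is equivalent to
\[
	df\in\Span\{\omega_{0},\Psi,dy+\alpha dx,dq\}.
\]
Thus the proposition splits into two tasks: checking that each proposed function has differential in this span (the first-integral property), and checking that the four proposed differentials are linearly independent at $p_{0}$ (the independence property). Throughout I would invoke the involutive-type hypothesis in the form $\alpha_{x}+p\alpha_{z}=\alpha(\alpha_{y}+q\alpha_{z})$, which is available because $\dim\Ch(\calI)=1$ (Proposition~\ref{prop:cauchy-alpha}).

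For the first-integral property, the functions $z-px-qy$, $p$ and $q$ are immediate: from $d(z-px-qy)=\omega_{0}-x\,dp-y\,dq$ together with $dp\equiv\alpha\,dq\mod\omega_{0},\Psi$, all three differentials manifestly lie in $\Span\{\omega_{0},\Psi,dq\}$. The two remaining candidates require the key computation. Reducing $d\alpha$ modulo $\omega_{0}$ (via $dz\equiv p\,dx+q\,dy$) and modulo $\Psi$ (via $dp\equiv\alpha\,dq$), then applying the involutive condition, I expect
\[
	d\alpha\equiv(\alpha_{y}+q\alpha_{z})(dy+\alpha dx)\mod\omega_{0},\Psi,dq,
\]
so that $d(y+\alpha x)=dy+\alpha dx+x\,d\alpha$ gives
\[
	d(y+\alpha x)\equiv\bigl(1+x(\alpha_{y}+q\alpha_{z})\bigr)(dy+\alpha dx)\mod\omega_{0},\Psi,dq.
\]
Since $dy+\alpha dx$ lies in the span, both $\alpha$ and $y+\alpha x$ are first integrals in either case.

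The crux is independence, and this is exactly where the dichotomy enters. I would express the four differentials in the basis $dx,dy,dz,dp,dq$ and row-reduce: clear the $dp,dq$ columns using $dp$ and $dq$, then clear the $dz$ column using $d(z-px-qy)$, whose $dz$-coefficient is $1$. In case~(\ref{case1}), after the involutive substitution the reduced $(dx,dy)$-part of $d(y+\alpha x)$ becomes the multiple $\bigl(1+x(\alpha_{y}+q\alpha_{z})\bigr)(\alpha,1,0)$, which is nonzero exactly under the case hypothesis, so the four differentials are independent. In case~(\ref{case2}) this same factor vanishes and $d(y+\alpha x)$ collapses, so $y+\alpha x$ must be replaced by $\alpha$; the analogous reduction of $d\alpha$ produces $(\alpha_{y}+q\alpha_{z})(\alpha,1,0)$, and independence now needs $\alpha_{y}+q\alpha_{z}\ne0$. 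The pleasantly self-consistent final point is that the hypothesis $1+x(\alpha_{y}+q\alpha_{z})=0$ forces $\alpha_{y}+q\alpha_{z}\ne0$ (and $x\ne0$), guaranteeing the substitution succeeds. The main obstacle is organizing this reduction cleanly enough that the single scalar $1+x(\alpha_{y}+q\alpha_{z})$ transparently dictates which of the two systems is the independent one.
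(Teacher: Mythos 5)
Your proposal is correct and takes essentially the same route as the paper: the paper's proof likewise starts from Lemma~\ref{lem:definition-form-Ch}, extracts $z-px-qy$, $p$, $q$ from the identities $\Psi\equiv dp \bmod dq$ and $d(z-px-qy)=\omega_{0}-x\,dp-y\,dq$, and then uses the involutive identity $\alpha_{x}+p\alpha_{z}=\alpha(\alpha_{y}+q\alpha_{z})$ to expand $d(y+\alpha x)=(1+x(\alpha_{y}+q\alpha_{z}))(dy+\alpha dx)+x\alpha_{z}\omega_{0}+x\alpha_{p}dp+x\alpha_{q}dq$ (resp.\ $d\alpha=-(1/x)(dy+\alpha dx)+\alpha_{z}\omega_{0}+\alpha_{p}dp+\alpha_{q}dq$, using $x\ne 0$ and $\alpha_{y}+q\alpha_{z}=-1/x$ in the non-generic case), concluding via the regularity of the triangular matrices in (\ref{matrix-transform1}) and (\ref{matrix-transform2}). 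Your congruence modulo $\{\omega_{0},\Psi,dq\}$ with pivot $1+x(\alpha_{y}+q\alpha_{z})$ (resp.\ $\alpha_{y}+q\alpha_{z}$) is exactly that matrix argument in row-reduced form.
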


\begin{proof}
	For the both cases, we show that three independent first integrals can be taken as $z-px-qy$, $p$ and $q$. By Lemma~\ref{lem:definition-form-Ch}, we have
	\[
		\Ch(\calI) = \{ dy+\alpha dx =\omega_{0}=\Psi = dq =0\}.
	\]
	Therefore one can see that
	\[
		\Psi=dp-\alpha dq \equiv dp \mod dq,
	\]
	and thus we can take two independent first integrals of $\Ch(\calI)$ as $p$ and $q$. Moreover, we obtain
	\begin{flalign*}
		\omega_{0} &=dz-pdx-qdy = dz-d(px)+xdp -d(qy) +ydq \\
		&=d(z-px-qy)+xdp + ydq ,
	\end{flalign*}
	and thus, one has
	\[
		d(z-px-qy)=\omega_{0} -xdp - ydq.
	\]
	Hence, from the above discussion, the functions $z-px-qy$, $p$ and $q$ are independent first integrals of $\Ch(\calI)$.
	
	Firstly, we show (1). We assume that the function $\alpha$ satisfies $1+x(\alpha_{y}+q\alpha_{z}) \ne0$ at $p_{0}$. We put $\omega:=dy+\alpha dx$, then 
	\begin{flalign*}
		\omega&=dy+d(\alpha x) -xd\alpha \\
		&= d(y+\alpha x) -x(\alpha_{x}dx +\alpha_{y}dy + \alpha_{z}dz)-x\alpha_{p}dp -x\alpha_{q}dq \\
		&=d(y+\alpha x) -x(\alpha_{y}+q\alpha_{z})\omega-x\alpha_{z}\omega_{0}-x\alpha_{p}dp -x\alpha_{q}dq.
	\end{flalign*}
	Hence, we obtain
	\[
		d(y+\alpha x) = (1+x(\alpha_{y}+q\alpha_{z}))\omega + x\alpha_{z}\omega_{0}+x\alpha_{p}dp +x\alpha_{q}dq.
	\]
	Therefore, at the point $p_{0}$, the following holds:
	\begin{flalign}\label{matrix-transform1}
		\left(
			\begin{array}{c}
				d(y+\alpha x) \\
				d(z-px-qy) \\
				dp \\
				dq \\
			\end{array}
		\right) = \left(
			\begin{array}{cccc}
				1+x(\alpha_{y}+q\alpha_{z})& x\alpha_{z} &x\alpha_{p} & x\alpha_{q}\\
				0 & 1&-x & -y\\
				0& 0& 1& 0\\
				0& 0& 0& 1\\
			\end{array}
		\right)
		\left(
			\begin{array}{c}
				\omega \\
				\omega_{0} \\
				dp \\
				dq\\
			\end{array}
		\right).
	\end{flalign}
	Since $\alpha$ satisfies $1+x(\alpha_{y}+q\alpha_{z}) \ne 0$, the coefficient matrix of the right-hand side of (\ref{matrix-transform1}) is regular. Then, this yields that
	\[
		\Ch(\calI)=\{d(y+\alpha x)=d(z-px-qy)=dp=dq=0\} \hspace{4mm} \text{at $p_{0}$},
	\]
	that is, $y+\alpha x$, $z-px-qy$, $p$ and $q$ are independent first integrals of $\Ch(\calI)$.

	We next show (2). We assume that the function $\alpha$ satisfies 
	\begin{flalign}\label{condition-PDE}
		1+x(\alpha_{y}+q\alpha_{z})=0 \hspace{4mm} \text{at $p_{0}$}.
	\end{flalign}
	Then $\alpha$ also satisfies (\ref{condition-alpha1}), we have
	\[
		\alpha = -x(\alpha_{x}+p \alpha_{z}).
	\]
	The same as the proof of (1), we put $\omega:=dy + \alpha dx$ and one has
	\begin{flalign*}
		\omega &= dy - x(\alpha_{x}+p\alpha_{z})dx \\
			   &= dy-x(d\alpha -(\alpha_{y}+q\alpha_{z})dy -\alpha_{z}\omega_{0} -\alpha_{p}dp-\alpha_{q}dq) \\
			   &= -x d\alpha +x \alpha_{z}\omega_{0} + x\alpha_{p}dp + x\alpha_{q}dq.
	\end{flalign*}
	By the condition (\ref{condition-PDE}), it must be $x \ne 0$. Hence we obtain
	\[
		d\alpha = -(1/x) \omega +\alpha_{z}\omega_{0} + \alpha_{p}dp + \alpha_{q}dq.
	\]
	Therefore, we have
	\begin{flalign}\label{matrix-transform2}
		\left(
			\begin{array}{c}
				d\alpha \\
				d(z-px-qy) \\
				dp \\
				dq \\
			\end{array}
		\right) = \left(
			\begin{array}{cccc}
				-(1/x)&\alpha_{z} &\alpha_{p} &\alpha_{q} \\
				0 & 1&-x & -y\\
				0& 0& 1& 0\\
				0& 0& 0& 1\\
			\end{array}
		\right)
		\left(
			\begin{array}{c}
				\omega \\
				\omega_{0} \\
				dp \\
				dq\\
			\end{array}
		\right).
	\end{flalign}
	The coefficient matrix of the right-hand side of (\ref{matrix-transform2}) is regular. This yields that
	\[
		\Ch(\calI)=\{d\alpha=d(z-px-qy)=dp=dq=0\}.
	\]
	Hence, the functions $\alpha$, $z-px-qy$, $p$ and $q$ are independent first integrals. This completes the proof.
\end{proof}

\begin{rem}
	We remark that Proposition~\ref{prop:structure-Ch} holds even if $\alpha$ satisfies the assumptions at one point. Of course, if $\alpha$ locally satisfies the assumptions, then Proposition~\ref{prop:structure-Ch} holds locally.
\end{rem}

\begin{rem}
	There exists a function $\alpha$ which satisfies $1 + x(\alpha_{y}+q\alpha_{z})=0$ at a point, but not locally. We do not deal with such wild functions in this paper.
\end{rem}

\begin{Def}
	The function $\alpha$ which satisfies $1+x(\alpha_{y}+q\alpha_{z}) \ne 0$ is called \textit{generic type}. On the other hand, the function $\alpha$ which satisfies $1+x(\alpha_{y}+q\alpha_{z}) = 0$ locally is called \textit{non-generic type}.
\end{Def}

By Remark~\ref{rem:cauchy}, $\Ch(\calI)$ is completely integrable. This gives a one dimensional foliation of $J^{1}(2,1)$. We denote the leaf space with respect to this foliation by $J^{1}(2,1)/\Ch(\calI)$, and remark that $J^{1}(2,1)/\Ch(\calI)$ is locally a four dimensional manifold.

If $\alpha$ is of generic type, we put
\[
	x_{1}:=y+\alpha x,\ x_{2}:=z-px-qy,\ x_{3}:=p\ x_{4}:=q.
\]
On the other hand, if $\alpha$ is of non-generic type, we put
\[
	x_{1}:=\alpha,\ x_{2}:=z-px-qy,\ x_{3}:=p,\ x_{4}:=q.
\]
Thus $(x_{1},x_{2},x_{3},x_{4})$ is a local coordinate of $J^{1}(2,1)/\Ch(\calI)$, and the following map
\[
	\pi \colon J^{1}(2,1)\to J^{1}(2,1)/\Ch(\calI);\ (x,y,z,p,q) \mapsto  (x_{1},x_{2},x_{3},x_{4})
\]
gives the natural fibration.

\subsection{Generator of the reduced GMAS in generic type}
Throughout this subsection, we assume that the function $\alpha$ is of generic type. From a general theory of exterior differential systems, it is known that the following theorem:
\begin{thm}[\cite{BCG3,Cfb}]\label{thm:reduction-EDS}
	Let $\mathcal{J}$ be an EDS defined on a manifold $M$. If the leaf space $M/\Ch(\mathcal{J})$ defined by $\Ch(\mathcal{J})$ is a manifold, then there exists an EDS $\tilde{\mathcal{J}}$ defined on $M/\Ch(\mathcal{J})$ such that $\mathcal{J} = \{\pi^{\ast}\tilde{\mathcal{J}}\}_{\mathrm{alg}}$, where $\pi \colon M \to M/\Ch(\mathcal{J})$ be the natural projection.
\end{thm}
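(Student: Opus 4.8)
Since this is the classical Cartan reduction of an exterior differential system by its Cauchy characteristics, the plan is to build the reduced system by descent along the characteristic foliation and then to reduce the whole claim to a single inclusion. By Remark~\ref{rem:cauchy}, $\Ch(\mathcal{J})$ is a completely integrable subbundle, so the Frobenius theorem furnishes, near each point, adapted coordinates $(x^{1},\ldots,x^{m},t^{1},\ldots,t^{s})$ in which $\Ch(\mathcal{J})$ is spanned by $\partial/\partial t^{1},\ldots,\partial/\partial t^{s}$ and the leaves are the common level sets of the $x^{i}$; thus $\pi$ is locally $(x,t)\mapsto(x)$ and the $x^{i}$ are first integrals of $\Ch(\mathcal{J})$. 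I would define the candidate system on $N:=M/\Ch(\mathcal{J})$ by
\[
	\tilde{\mathcal{J}}:=\{\tilde\omega\in\Omega^{\ast}(N)\ |\ \pi^{\ast}\tilde\omega\in\mathcal{J}\}.
\]
It is immediate that $\tilde{\mathcal{J}}$ is an EDS on $N$: it is a homogeneous ideal because $\pi^{\ast}$ preserves degree and is multiplicative and injective while $\mathcal{J}$ is an ideal, and it is closed under $d$ because $\pi^{\ast}(d\tilde\omega)=d(\pi^{\ast}\tilde\omega)\in\mathcal{J}$ and $\pi^{\ast}$ is injective. With this definition the inclusion $\{\pi^{\ast}\tilde{\mathcal{J}}\}_{\mathrm{alg}}\subseteq\mathcal{J}$ holds automatically, since each generator $\pi^{\ast}\tilde\omega$ lies in $\mathcal{J}$ and $\mathcal{J}$ is an ideal. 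Hence everything reduces to the reverse inclusion $\mathcal{J}\subseteq\{\pi^{\ast}\tilde{\mathcal{J}}\}_{\mathrm{alg}}$.

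Next I would record the two invariance properties of $\mathcal{J}$ along the characteristic directions, which feed the descent. For any $Y\in\Ch(\mathcal{J})$ one has $\iota_{Y}\mathcal{J}\subseteq\mathcal{J}$ by definition, and in addition $\mathcal{L}_{Y}\mathcal{J}\subseteq\mathcal{J}$: indeed, for $\omega\in\mathcal{J}$ Cartan's formula gives $\mathcal{L}_{Y}\omega=\iota_{Y}d\omega+d\iota_{Y}\omega$, where $\iota_{Y}d\omega\in\mathcal{J}$ because $d\omega\in\mathcal{J}$, and $d\iota_{Y}\omega\in\mathcal{J}$ because $\iota_{Y}\omega\in\mathcal{J}$ and $\mathcal{J}$ is closed under $d$. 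I would combine these with the standard characterization of basic forms: a differential form on $M$ is a pullback $\pi^{\ast}\tilde\omega$ (for connected fibers) if and only if it is both semibasic, $\iota_{Y}(\cdot)=0$, and invariant, $\mathcal{L}_{Y}(\cdot)=0$, for every $Y\in\Ch(\mathcal{J})$; in the adapted coordinates this means precisely that it contains no factor $dt^{a}$ and that its coefficients are independent of the $t^{a}$.

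The reverse inclusion is the heart of the matter, and I expect it to be the main obstacle. It amounts to showing that $\mathcal{J}$ is generated, as an algebraic ideal, by those of its elements that are basic. The difficulty is that a general $\omega\in\mathcal{J}$ only satisfies $\iota_{Y}\omega\in\mathcal{J}$ and $\mathcal{L}_{Y}\omega\in\mathcal{J}$, rather than $\iota_{Y}\omega=0$ and $\mathcal{L}_{Y}\omega=0$, so no individual element of $\mathcal{J}$ need be basic. I would attack this locally in the adapted coordinates by eliminating the vertical data in two stages. First, writing $\omega=\sum_{a}dt^{a}\wedge\beta_{a}+\gamma$ with $\beta_{a}$ and $\gamma$ free of $dt$, the contractions $\beta_{a}=\iota_{\partial/\partial t^{a}}\omega$ again lie in $\mathcal{J}$ but carry strictly fewer $dt$-factors, so a downward induction on the number of $dt$-factors reduces the problem to semibasic $\omega$. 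Second, for a semibasic $\omega=\sum_{I}f_{I}(x,t)\,dx^{I}\in\mathcal{J}$, the memberships $\mathcal{L}_{\partial/\partial t^{a}}\omega\in\mathcal{J}$ say that every $t$-derivative of $\omega$ remains in $\mathcal{J}$; using the flow-invariance $\phi^{\ast}\mathcal{J}=\mathcal{J}$ of the characteristic flow $\phi$ and the connectedness of the fibers, one extracts from $\omega$ a genuinely basic element of $\mathcal{J}$ and absorbs the residual $t$-dependence into the algebraic ideal generated by the basic elements already produced. This is exactly the delicate elimination step in Cartan's reduction and is where the real work lies.

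Finally, assembling such a basic generating set locally yields $\mathcal{J}\subseteq\{\pi^{\ast}\tilde{\mathcal{J}}\}_{\mathrm{alg}}$, and together with the trivial inclusion this gives the desired equality $\mathcal{J}=\{\pi^{\ast}\tilde{\mathcal{J}}\}_{\mathrm{alg}}$. I note that in the setting of this paper the argument is far lighter: by Lemma~\ref{lem:dimension-cauchy} the rank of $\Ch(\mathcal{J})$ is $s=1$, and Proposition~\ref{prop:structure-Ch} provides explicit independent first integrals, so the adapted coordinates and hence the descent to $N$ are completely explicit, and the elimination above collapses to the single vertical coordinate.
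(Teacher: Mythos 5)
First, a point of comparison: the paper does not prove Theorem~\ref{thm:reduction-EDS} at all --- it is quoted from \cite{BCG3,Cfb} (Cartan's reduction of an EDS by its Cauchy characteristic system), and the paper only ever uses it through the explicit first integrals of Proposition~\ref{prop:structure-Ch} and the hand-computed generators of Propositions~\ref{prop:form-I/ChI-1} and~\ref{prop:form-I/ChI-2}. So your attempt must be measured against the classical proof in those references, and your skeleton does follow it faithfully: defining $\tilde{\mathcal{J}}:=\{\tilde\omega\in\Omega^{\ast}(M/\Ch(\mathcal{J}))\ |\ \pi^{\ast}\tilde\omega\in\mathcal{J}\}$, checking it is an EDS, noting the trivial inclusion $\{\pi^{\ast}\tilde{\mathcal{J}}\}_{\mathrm{alg}}\subseteq\mathcal{J}$, deriving $\mathcal{L}_{Y}\mathcal{J}\subseteq\mathcal{J}$ from Cartan's formula, invoking the characterization of basic forms as semibasic plus invariant, and carrying out the first elimination stage (downward induction on the number of $dt^{a}$-factors via $\iota_{\partial/\partial t^{a}}$) are all correct and complete.

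The genuine gap is the second stage, which you yourself flag as ``where the real work lies'' and then leave as a description rather than an argument; as written it does not close. Two concrete problems. (i) The flow-invariance $\phi^{\ast}\mathcal{J}=\mathcal{J}$ is not a formal consequence of $\mathcal{L}_{Y}\mathcal{J}\subseteq\mathcal{J}$: to propagate ideal membership along the characteristic flow one must pass to a finite local generating set $\varphi^{1},\dots,\varphi^{r}$, write $\mathcal{L}_{Y}\varphi^{a}=\sum_{b}A^{a}_{\ b}\varphi^{b}$ modulo appropriate terms, and run a linear-ODE (Gronwall-type) argument; none of this appears in your text. (ii) More importantly, ``one extracts from $\omega$ a genuinely basic element of $\mathcal{J}$'' is exactly the assertion to be proved, and the operation you gesture at fails for a single form: for semibasic $\omega=\sum_{I}f_{I}(x,t)\,dx^{I}\in\mathcal{J}$, knowing that all iterated Lie derivatives $\mathcal{L}^{k}_{\partial/\partial t}\omega$ stay in $\mathcal{J}$ does not permit you to freeze the $t$-dependence of the coefficients and remain in $\mathcal{J}$, because ideal membership is a module condition over $C^{\infty}(M)$, not a pointwise one on leaf slices. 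The classical repair, which is the actual content of the theorem in \cite{BCG3}, normalizes a whole semibasic generating set at once: one solves the matrix ODE $\partial g/\partial t=-gA$ for an invertible $g(x,t)$ and replaces $(\varphi^{a})$ by $(\sum_{b}g^{a}_{\ b}\varphi^{b})$, which satisfies $\mathcal{L}_{\partial/\partial t}(g\varphi)=0$, hence consists of basic forms, and generates the same ideal since $g$ is invertible; induction on $\rank\Ch(\mathcal{J})$ then treats $s>1$. Your closing remark is nevertheless accurate for this paper: with $s=1$ and the explicit first integrals of Proposition~\ref{prop:structure-Ch}, the reduced generators are exhibited directly in Propositions~\ref{prop:form-I/ChI-1} and~\ref{prop:form-I/ChI-2}, so nothing downstream depends on completing your general argument.
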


\noindent
Since the GMAS $\calI$ is generated by two 1-forms, we expect that there exist 1-forms $\xi_{1}$ and $\xi_{2}$ defined on $J^{1}(2,1)/\Ch(\calI)$ such that
\[
	\calI=\{\pi^{\ast}\xi_{1}, \pi^{\ast}\xi_{2},\pi^{\ast}(d\xi_{1}),\pi^{\ast}(d\xi_{2})\}_{\mathrm{alg}} = \{\pi^{\ast}\xi_{1}, \pi^{\ast}\xi_{2}\}_{\mathrm{diff}}.
\]
In this subsection, for a GMAS $\calI$ of generic type, we find such 1-forms $\xi_{1}$ and $\xi_{2}$ defined on $J^{1}(2,1)/\Ch(\calI)$.

We recall that $J^{1}(2,1)/\Ch(\calI)$ has a local coordinate $(U_{1};x_{1}, x_{2},x_{3},x_{4})$
defined by
\[
	x_{1}:=y+\alpha x,\ x_{2}:=z-px-qy,\ x_{3}:=p,\ x_{4}:=q.
\]
Then, for computational convenience, we decompose the natural fibration $\pi \colon J^{1}(2,1)$ $\to U_{1}$ as follows: we define the map  $\Phi_{1}\colon J^{1}(2,1) \to \R \times U_{1}$ by
\begin{flalign}\label{def-Phi1}
	x_{0}:=x,x_{1}:=y+\alpha x,\ x_{2}:=z-px-qy,\ x_{3}:=p,\ x_{4}:=q.
\end{flalign}
Then one can easily see that the following diagram commute:
\begin{center}
		\begin{tikzpicture}[auto]
			\node (J1) at (0, 0) {$J^{1}(2,1)$}; 
			\node (RU1) at (2.5,0) {$\R\times U_{1}$};
			\node (U1) at (2.5, -1.5) {$U_{1}$};
			\draw[->] (J1) to node {$\scriptstyle \Phi_{1}$} (RU1);
			\draw[->] (RU1) to node {$\scriptstyle \pr$} (U1);
			\draw[->] (J1) to node[swap] {$\scriptstyle \pi$} (U1);
			\end{tikzpicture}
		\end{center}
where $\pr$ is the natural projection.
Let $J_{\Phi_{1}}$ be the Jacobian of $\Phi_{1}$. By direct calculations, we have
\begin{flalign*}
	J_{\Phi_{1}}=\left(
		\begin{array}{ccccc}
			1& 0& 0& 0&0 \\
			\alpha_{x}x+\alpha& 1+\alpha_{y}x& \alpha_{z}x&\alpha_{p}x &\alpha_{q}x \\
			-p&-q &1 &-x &-y \\
			0&0 &0 &1 &0 \\
			0& 0&0 &0 &1 \\
		\end{array}
	\right),
\end{flalign*}
and thus, one has
\[
	\det J_{\Phi_{1}} = -(1+x(\alpha_{y}+q\alpha_{z})) \ne 0. 
\]
Therefore, by the inverse map theorem, there exists the locally inverse map $\Phi_{1}^{-1}$ defined on some open subset $V_{1}$ of $\R \times U_{1}$. 
\begin{nota}\label{notation:function-bar}
	In the following, we consider differential forms defined on $J^{1}(2,1)/$ $\Ch(\calI)$ corresponding to $\omega_{0}$ and $\Psi$. Then, we need to consider the functions which appear in the coefficients of $\omega_{0}$ and $\Psi$. Therefore, for this technical reasons, we introduce the following notation: let $f$ be a function defined on $\Phi^{-1}_{1}(V_{1})$. Then we denote the function on $V_{1}$ induced by $f$ as
	\[
		\overline{f}:=f \circ \Phi_{1}^{-1} \colon V_{1} \to \R.
	\]
\end{nota}

Here, we remark the following properties of this notation.

\begin{lem}
	 Let $f_{1}$ and $f_{2}$ be functions defined on $\Phi^{-1}_{1}(V_{1})$. Then the following holds:
	\begin{flalign}\label{property-bar1}
		\overline{f_{1}+f_{2}}=\overline{f_{1}}+\overline{f_{2}},\hspace{5mm}\overline{f_{1}f_{2}}=(\overline{f_{1}})(\overline{f_{2}}).
	\end{flalign}
\end{lem}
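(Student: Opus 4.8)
The entire statement is a formal consequence of the fact that $\overline{\,\cdot\,}$ is precisely precomposition with the fixed map $\Phi_1^{-1}$, together with the pointwise definition of sums and products of real-valued functions. The plan is therefore to unwind the definition $\overline{f} = f \circ \Phi_1^{-1}$ and evaluate both sides of each identity at an arbitrary point of $V_1$; no geometric input beyond the existence of $\Phi_1^{-1}$ (guaranteed by the inverse map theorem on $V_1$, as established above) is needed.

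First I would fix an arbitrary point $v \in V_1$ and set $\Phi_1^{-1}(v) \in \Phi_1^{-1}(V_1)$, so that $f_1$ and $f_2$ are indeed defined there. For the additive identity, I would compute
\[
	\overline{f_1+f_2}(v) = (f_1+f_2)\bigl(\Phi_1^{-1}(v)\bigr) = f_1\bigl(\Phi_1^{-1}(v)\bigr) + f_2\bigl(\Phi_1^{-1}(v)\bigr) = \overline{f_1}(v) + \overline{f_2}(v),
\]
where the middle equality is just the pointwise definition of the sum $f_1+f_2$ on $\Phi_1^{-1}(V_1)$. Since $v$ was arbitrary, this gives $\overline{f_1+f_2} = \overline{f_1}+\overline{f_2}$ as functions on $V_1$.

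For the multiplicative identity the argument is identical, replacing the sum by the product: evaluating at $v$ and using the pointwise definition of $f_1 f_2$ yields
\[
	\overline{f_1 f_2}(v) = (f_1 f_2)\bigl(\Phi_1^{-1}(v)\bigr) = f_1\bigl(\Phi_1^{-1}(v)\bigr)\, f_2\bigl(\Phi_1^{-1}(v)\bigr) = \overline{f_1}(v)\,\overline{f_2}(v),
\]
so $\overline{f_1 f_2} = (\overline{f_1})(\overline{f_2})$ on $V_1$. This completes the verification of both identities in (\ref{property-bar1}).

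There is no genuine obstacle here: the lemma records the elementary (but for later computations convenient) fact that precomposition by a single map is a ring homomorphism on functions. The only point requiring a word of care is domain-matching, namely that $f_1$ and $f_2$ are defined on all of $\Phi_1^{-1}(V_1)$ so that $\overline{f_1}$, $\overline{f_2}$, $\overline{f_1+f_2}$ and $\overline{f_1 f_2}$ all make sense on the common domain $V_1$; this is exactly the hypothesis of the lemma.
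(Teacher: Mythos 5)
Your proof is correct and is exactly the paper's argument spelled out in detail: the paper simply remarks that the identities are an obvious property of composition of mappings, and your pointwise evaluation at an arbitrary $v \in V_{1}$ is the fully written-out version of that observation. No gap and no difference in approach.
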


\begin{proof}
	This is obvious because it is a property of the composition of mappings. 
\end{proof}

The Jacobian of $\Phi^{-1}_{1}$ coincides with the inverse matrix of $J_{\Phi_{1}}$. Therefore, by using the Gaussian elimination, we have
{\small
\begin{flalign}\label{Jacobian1}
	J_{\Phi_{1}^{-1}}=
	\left(
		\begin{array}{ccccc}
			 \overline{1}& 0& 0&0 &0 \\
			-\overline{\alpha}& \overline{\rho}&-\overline{\rho x \alpha_{z}} & -\overline{\rho x (\alpha_{p}+x\alpha_{z})}&-\overline{\rho x (\alpha_{q}+y\alpha_{z})} \\
			\overline{p-q\alpha}& \overline{\rho q}&\overline{1-\rho q x\alpha_{z} }&\overline{x- \rho qx (\alpha_{p}+x\alpha_{z})}& \overline{y-\rho qx (\alpha_{q}+y\alpha_{z})} \\
			0&0 & 0&\overline{1} &0 \\
			0&0 &0 &0 &\overline{1} \\
		\end{array}
	\right),
\end{flalign}
}

\noindent
where $\rho:=1/(1+x(\alpha_{y}+q\alpha_{z}))$. Let the second and third components of $\Phi^{-1}_{1}$ be $\varphi_{1}$ and $\psi_{1}$, respectively. Then, by (\ref{Jacobian1}), we have
	{\small
	\begin{flalign}
		\label{dphi1}d\varphi_{1} &= -\overline{\alpha}dx_{0}+ \overline{\rho}dx_{1} -\overline{\rho x \alpha_{z}}dx_{2} - \overline{\rho x(\alpha_{p}+x\alpha_{z}}) dx_{3} - \overline{\rho x(\alpha_{q}+\varphi_{1}\alpha_{z}})dx_{4}, \\
		\label{dpsi1}
			\hspace{-2mm}d\psi_{1}&=\overline{p-\alpha q}dx_{0}+\overline{\rho q}dx_{1} + \overline{1-\rho q x\alpha_{z}}dx_{2} +\overline{x- \rho qx (\alpha_{p}+x\alpha_{z})}dx_{3}\\
			&\notag \hspace{73mm}+\overline{y-\rho qx (\alpha_{q}+y\alpha_{z})}dx_{4}.
	\end{flalign}
	}
We next show that the function $\overline{\alpha}$ is defined on $J^{1}(2,1)/\Ch(\calI)$.

\begin{lem}\label{lem:alpha-function}
	The partial derivative $\overline{\alpha}$ by $x_{0}$ is zero. Namely, $\overline{\alpha}$ is a function defined on $U_{1} ( \subset J^{1}(2,1)/\Ch(\calI))$.
\end{lem}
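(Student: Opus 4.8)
The plan is to verify the claim by a direct chain-rule computation: I would compute $\partial \overline{\alpha}/\partial x_0$ explicitly and show that the resulting expression is precisely the left-hand side of the involutive-type condition (\ref{condition-alpha1}), which vanishes by hypothesis.

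First I would write $\overline{\alpha}=\alpha\circ\Phi_1^{-1}$ and differentiate in the fiber direction $x_0$. Viewing the components of $\Phi_1^{-1}$ as the functions $(x,y,z,p,q)$ of $(x_0,x_1,x_2,x_3,x_4)$, the chain rule expresses $\partial\overline{\alpha}/\partial x_0$ as the sum of $\overline{\alpha_x},\overline{\alpha_y},\overline{\alpha_z},\overline{\alpha_p},\overline{\alpha_q}$ weighted by the entries of the first column of the Jacobian $J_{\Phi_1^{-1}}$. Reading that first column off from (\ref{Jacobian1}), namely $(\overline{1},\,-\overline{\alpha},\,\overline{p-q\alpha},\,0,\,0)^{T}$, only three terms survive and I obtain
\[
	\frac{\partial\overline{\alpha}}{\partial x_0}=\overline{\alpha_x}-\overline{\alpha}\,\overline{\alpha_y}+\overline{\alpha_z}\,\overline{p-q\alpha}.
\]
Then, using the multiplicativity of the bar operation from (\ref{property-bar1}), I would collapse the right-hand side into a single bar:
\[
	\frac{\partial\overline{\alpha}}{\partial x_0}=\overline{\alpha_x+p\alpha_z-\alpha(\alpha_y+q\alpha_z)}.
\]
Since $\alpha$ is of involutive type, the argument inside the bar is identically zero by (\ref{condition-alpha1}), so $\partial\overline{\alpha}/\partial x_0=0$; that is, $\overline{\alpha}$ does not depend on $x_0$ and hence descends to a function on $U_1\subset J^{1}(2,1)/\Ch(\calI)$.

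The computation is routine once the correct column of $J_{\Phi_1^{-1}}$ is in hand; the only point that needs care is pairing each $\overline{\alpha_{\bullet}}$ with the right Jacobian entry and recognizing that the three surviving terms reassemble exactly into the involutive-type expression. As a conceptual check I would note that the vanishing of (\ref{condition-alpha1}) is equivalent to $W(\alpha)=0$ for the generator $W$ of $\Ch(\calI)$ produced in Lemma~\ref{lem:definition-form-Ch}: since $x_1,x_2,x_3,x_4$ are first integrals, $\Phi_1$ pushes $W$ forward to a nonzero multiple of $\partial/\partial x_0$, and $W(\alpha)=0$ then forces $\partial\overline{\alpha}/\partial x_0=0$, which is an independent confirmation of the same conclusion.
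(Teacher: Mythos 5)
Your proposal is correct and follows essentially the same route as the paper's own proof: the identical chain-rule computation in the fiber direction $x_{0}$, pairing $\overline{\alpha_{x}},\overline{\alpha_{y}},\overline{\alpha_{z}}$ with the first-column entries $\overline{1},-\overline{\alpha},\overline{p-q\alpha}$ of $J_{\Phi_{1}^{-1}}$ (the $\alpha_{p},\alpha_{q}$ terms dropping out since $p=x_{3}$, $q=x_{4}$ are independent of $x_{0}$), then collapsing via the multiplicativity of the bar operation to $\overline{\alpha_{x}+p\alpha_{z}-\alpha(\alpha_{y}+q\alpha_{z})}$, which vanishes by the involutive-type condition (\ref{condition-alpha1}). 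Your closing observation that this expression equals $W(\alpha)$ for the generator $W$ of $\Ch(\calI)$, pushed forward by $\Phi_{1}$ to a multiple of $\partial/\partial x_{0}$, is a valid extra consistency check not in the paper but changes nothing in the argument.
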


\begin{proof}
	Remark that 
	\[
	\overline{\alpha}(x_{0},x_{1},x_{2},x_{3},x_{4})=\alpha \circ \Phi^{-1}_{1}(x_{0},x_{1},x_{2},x_{3},x_{4}).
	\] 
	Then, by the chain rule and (\ref{property-bar1}), we have
	\begin{flalign*}
		(\overline{\alpha})_{x_{0}}&=\overline{\alpha_{x}}(x_{0})_{x_{0}} + \overline{\alpha_{y}}(\varphi_{1})_{x_{0}} + \overline{\alpha_{z}}(\psi_{1})_{x_{0}}\\
		&=\overline{\alpha_{x}}+\overline{\alpha_{y}}(\overline{-\alpha})+\overline{\alpha_{z}}(\overline{p-q\alpha})\\
		&=\overline{\alpha_{x} +p\alpha_{z} - \alpha (\alpha_{y}+q\alpha_{z})}.
	\end{flalign*}
	Since $\alpha$ satisfies the condition (\ref{condition-alpha1}), it must be $(\overline{\alpha})_{x_{0}} =0$. This completes the proof.
\end{proof}

\begin{prop}\label{prop:form-I/ChI-1}
	If the function $\alpha$ is of generic type, then the following holds:
	\[
		\calI=\left\{
			\pi^{\ast}(dx_{2}+x_{1}dx_{4}),\pi^{\ast}(dx_{3}-\overline{\alpha} dx_{4})
		\right\}_{\mathrm{diff}},
	\]	
	and thus we obtain
	\[
		\tilde{\calI}=\{dx_{2}+x_{1}dx_{4},dx_{3}-\overline{\alpha} dx_{4}\}_{\mathrm{diff}}.
	\]
\end{prop}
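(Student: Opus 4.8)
The plan is to pull the two proposed generators back to $J^{1}(2,1)$ along $\pi$, show that together with their exterior derivatives they generate exactly the same differential ideal as $\omega_{0}$ and $\Psi$, and then transfer this identity to the leaf space via the reduction theorem. Write $\xi_{1}:=dx_{2}+x_{1}\,dx_{4}$ and $\xi_{2}:=dx_{3}-\overline{\alpha}\,dx_{4}$, so that the two displayed equalities are $\calI=\{\pi^{\ast}\xi_{1},\pi^{\ast}\xi_{2}\}_{\mathrm{diff}}$ and $\tilde{\calI}=\{\xi_{1},\xi_{2}\}_{\mathrm{diff}}$.

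First I would compute the pullbacks explicitly. Using $\pi^{\ast}x_{1}=y+\alpha x$, $\pi^{\ast}x_{2}=z-px-qy$, $\pi^{\ast}x_{3}=p$, $\pi^{\ast}x_{4}=q$ and $\pi^{\ast}\circ d=d\circ\pi^{\ast}$, I get $\pi^{\ast}\xi_{1}=d(z-px-qy)+(y+\alpha x)\,dq$ and $\pi^{\ast}\xi_{2}=dp-(\pi^{\ast}\overline{\alpha})\,dq$. The one genuinely non-formal input is the identity $\pi^{\ast}\overline{\alpha}=\alpha$: this follows from Lemma~\ref{lem:alpha-function}, which ensures $\overline{\alpha}$ is independent of $x_{0}$ and hence descends to $J^{1}(2,1)/\Ch(\calI)$, combined with the commutativity of the diagram relating $\pi$ and $\Phi_{1}$, giving $\overline{\alpha}\circ\pi=\alpha\circ\Phi_{1}^{-1}\circ\Phi_{1}=\alpha$. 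I expect this to be the main obstacle, together with the careful handling of the ``$\mod\omega_{0}$'' ambiguity in $\Psi$, for which I would fix the representative $\Psi=dp-\alpha\,dq$.

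Next I would rewrite these purely in terms of $\omega_{0}$ and $\Psi$. From the computation in the proof of Proposition~\ref{prop:structure-Ch} one has $\omega_{0}=d(z-px-qy)+x\,dp+y\,dq$, and substituting $d(z-px-qy)=\omega_{0}-x\,dp-y\,dq$ yields the clean relations
\[
	\pi^{\ast}\xi_{1}=\omega_{0}-x\Psi,\qquad \pi^{\ast}\xi_{2}=\Psi.
\]
Thus the two pairs of $1$-forms are related by the unipotent matrix $\left(\begin{smallmatrix}1&-x\\0&1\end{smallmatrix}\right)$, which is invertible, so they generate the same algebraic ideal of $1$-forms on $U$.

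Finally I would upgrade this to an identity of differential ideals. Since $\alpha$ is of generic type we are in the involutive setting, so $\dim\Ch(\calI)=1$ and, by Remark~\ref{rem:cauchy-dimension}, $d\Psi\equiv 0\mod\omega_{0},\Psi,d\omega_{0}$ with $\calI=\{\omega_{0},\Psi\}_{\mathrm{diff}}$. Differentiating the relations above gives $d(\omega_{0}-x\Psi)=d\omega_{0}-dx\wedge\Psi-x\,d\Psi\in\{\omega_{0},\Psi\}_{\mathrm{diff}}$, and conversely $d\omega_{0}=d(\omega_{0}-x\Psi)+dx\wedge\Psi+x\,d\Psi\in\{\pi^{\ast}\xi_{1},\pi^{\ast}\xi_{2}\}_{\mathrm{diff}}$; carrying out this elementary bookkeeping in both directions establishes $\calI=\{\pi^{\ast}\xi_{1},\pi^{\ast}\xi_{2}\}_{\mathrm{diff}}$. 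For the second equality, note that $\{\pi^{\ast}(\{\xi_{1},\xi_{2}\}_{\mathrm{diff}})\}_{\mathrm{alg}}=\{\pi^{\ast}\xi_{1},\pi^{\ast}\xi_{2}\}_{\mathrm{diff}}=\calI$, so $\{\xi_{1},\xi_{2}\}_{\mathrm{diff}}$ satisfies the defining property of the reduced EDS in Theorem~\ref{thm:reduction-EDS}; since $\pi$ is a submersion, $\pi^{\ast}$ is injective on forms on the leaf space, which identifies $\tilde{\calI}=\{\xi_{1},\xi_{2}\}_{\mathrm{diff}}=\{dx_{2}+x_{1}\,dx_{4},\,dx_{3}-\overline{\alpha}\,dx_{4}\}_{\mathrm{diff}}$, as claimed.
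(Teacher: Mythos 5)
Your proof is correct and takes essentially the same route as the paper: both rest on the identities $\pi^{\ast}(dx_{2}+x_{1}\,dx_{4})=\omega_{0}-x\Psi$ and $\pi^{\ast}(dx_{3}-\overline{\alpha}\,dx_{4})=\Psi$, established via Lemma~\ref{lem:alpha-function} and the factorization $\pi=\pr\circ\Phi_{1}$ — the paper computes through $(\Phi_{1}^{-1})^{\ast}$ and eliminates the fiber coordinate $x_{0}$, while you pull back along $\pi$ directly, which is the same computation read in the opposite direction. Your additional bookkeeping (the unipotent change of generators for the differential ideals, and injectivity of $\pi^{\ast}$ to pin down $\tilde{\calI}$) only makes explicit what the paper leaves implicit; note that your appeal to $d\Psi\equiv 0 \bmod \omega_{0},\Psi,d\omega_{0}$ is superfluous there, since $d\Psi\in\{\omega_{0},\Psi\}_{\mathrm{diff}}$ by definition.
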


\begin{proof}
	In order to find the differential forms which generate $\tilde{\calI}$, we pull back $\omega_{0}$ and $\Psi$ to $V_{1}(\subset \R \times J^{1}(2,1)/\Ch(\calI))$. By (\ref{dphi1}) and (\ref{dpsi1}), we obtain
	\begin{flalign}
		\label{righthandside-defform1} (\Phi_{1}^{-1})^{\ast}\omega_{0}&=d\psi_{1} -x_{3}dx_{0}-x_{4}d\varphi_{1}=dx_{2}+x_{0}dx_{3}+\overline{y}dx_{4},\\
		\label{phi-inverse-form1}(\Phi_{1}^{-1})^{\ast}\Psi &= dx_{3}-\overline{\alpha}dx_{4}.
	\end{flalign}
	Then, by Lemma~\ref{lem:alpha-function}, the function $\overline{\alpha}$ is defined on $U_{1}$. Therefore, the right-hand side of (\ref{righthandside-defform1}) is a differential 1-form on $U_{1}$.
	On the other hand, since the right-hand side of (\ref{righthandside-defform1}) has a fiber coordinate $x_{0}$, it is not a 1-form on $U_{1}$. To eliminate $x_{0}$, we  perform the following calculation:
	\begin{flalign}
		\notag(\Phi_{1}^{-1})^{\ast}(\omega_{0}-x\Psi)&=dx_{2}+x_{0}dx_{3}+\overline{y}dx_{4}-x_{0}(dx_{3}-\overline{\alpha}dx_{4})\\
		\notag&=dx_{2}+\overline{y+\alpha x}dx_{4} \\
		&\label{defform-omega0-xPsi1}=dx_{2}+x_{1}dx_{4}.
	\end{flalign}
	Here, the right-hand side of (\ref{defform-omega0-xPsi1}) is a 1-form defined on $U_{1}$. 
	
	From the above discussion, we find a candidate for a generator of $\tilde{\calI}$. We next show that these are the desired 1-forms. Here, we remark that $\pi= \pr|_{V_{1}} \circ \Phi_{1}$. Then we have
	\begin{flalign*}
		\pi^{\ast}(dx_{2}+x_{1}dx_{4})&=(\Phi_{1}^{\ast}\circ (\pr|_{V_{1}})^{\ast})(dx_{2}+x_{1}dx_{4})=\Phi_{1}^{\ast} (\Phi_{1}^{-1})^{\ast}(\omega_{0}-x\Psi) \\
		&=\omega_{0}-x\Psi,\\
		\pi^{\ast}(dx_{3}-\overline{\alpha}dx_{4})&=(\Phi_{1}^{\ast}\circ (\pr|_{V_{1}})^{\ast})(dx_{3}-\overline{\alpha}dx_{4})=\Phi_{1}^{\ast}(\Phi_{1}^{-1})^{\ast}(\Psi)=\Psi.
	\end{flalign*}
	Therefore, one has
	\[
		\calI=\{\pi^{\ast}(dx_{2}+x_{1}dx_{4}),\pi^{\ast}(dx_{3}-\overline{\alpha}dx_{4})\}_{\mathrm{diff}},
	\]
	and thus
	\[
		\tilde{\calI}=\{dx_{2}+x_{1}dx_{4},dx_{3}-\overline{\alpha}dx_{4}\}_{\mathrm{diff}}.
	\]
	This completes the proof.
\end{proof}

\subsection{Generator of the reduced GMAS in non-generic type}
Throughout this subsection, we assume that the function $\alpha$ is of non-generic type. As in the previous subsection, for the GMAS $\calI$ of non-generic type, we find generators $\xi_{1}$ and $\xi_{2}$ of the reduced GMAS $\tilde{\calI}$. We recall that $J^{1}(2,1)/\Ch(\calI)$ has a local coordinate $(U_{2};x_{1},x_{2},x_{3},x_{4})$
defined by
\[
	x_{1}:=\alpha,\ x_{2}:=z-px-qy,\ x_{3}:=p\ x_{4}:=q.
\]
By the same reason as in the previous subsection, we decompose the natural fibration $\pi \colon J^{1}(2,1) \to U_{2}$ as follows: we define the map  $\Phi_{2}\colon J^{1}(2,1) \to \R \times U_{2}$ by
\begin{flalign}\label{def-Phi2}
	x_{0}:=x,\ x_{1}:=\alpha,\ x_{2}:=z-px-qy,\ x_{3}:=p,\ x_{4}:=q.
\end{flalign}
Then one can easily see that $\pi = \pr \circ \Phi_{2}$,
where $\pr \colon \R \times U_{2} \to U_{2}$ is the natural projection.
Let $J_{\Phi_{2}}$ be the Jacobian of $\Phi_{2}$. Then, by direct calculations, we have
\[
	J_{\Phi_{2}}=\left(
		\begin{array}{ccccc}
			1& 0& 0& 0&0 \\
			\alpha_{x}& \alpha_{y}& \alpha_{z}&\alpha_{p} &\alpha_{q} \\
			-p& -q& 1&-x & -y\\
			0& 0& 0&1 &0 \\
			0&0 &0 &0 &1 \\
		\end{array}
	\right).
\]
Since $\alpha$ satisfies $1 + x(\alpha_{y}+q\alpha_{z})=0$, one has
\[
	\det J_{\Phi_{2}} = \alpha_{y} + q\alpha_{z} = -(1/x) \ne 0.
\]
Therefore, by the inverse map theorem, there exists the locally inverse map $\Phi^{-1}_{2}$ defined on some open subset $V_{2}$ in $\R \times U_{2}$. 
\begin{nota}
	For the same technical reasons in Notation~\ref{notation:function-bar}, we introduce the following notation: let $f$ be a function defined on $\Phi^{-1}_{2}(V_{2})$. Then we denote the function on $V_{2}$ induced by $f$ as
	\[
		\overline{f}:=f \circ \Phi_{2}^{-1} \colon V_{2} \to \R.
	\]
\end{nota}
	Here, we remark the following properties of this notation.
\begin{lem}\label{lem:property-bar2}
	 Let $f_{1}$ and $f_{2}$ be functions defined on $\Phi^{-1}_{2}(V_{2})$. Then the following holds:
	 \begin{flalign}\label{property-bar1}
		\overline{f_{1}+f_{2}}=\overline{f_{1}}+\overline{f_{2}},\hspace{3mm}\overline{f_{1}f_{2}}=(\overline{f_{1}})(\overline{f_{2}}).
	\end{flalign}
\end{lem}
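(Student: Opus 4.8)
The final statement to prove is Lemma~\ref{lem:property-bar2}, which asserts that the bar operation $\overline{f} := f \circ \Phi_2^{-1}$ is additive and multiplicative, exactly as in the generic case (the unnumbered lemma preceding Proposition~\ref{prop:form-I/ChI-1}). The plan is to observe that this is a purely formal property of precomposition with a fixed map. Since $\Phi_2^{-1} \colon V_2 \to \Phi_2^{-1}(V_2)$ is a single well-defined smooth map, evaluating $(f_1 + f_2) \circ \Phi_2^{-1}$ at any point $v \in V_2$ reduces to evaluating $f_1 + f_2$ at the single point $\Phi_2^{-1}(v)$, and the definition of the pointwise sum of functions does the rest.

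More concretely, I would write, for an arbitrary point $v \in V_2$,
\[
	\overline{f_1+f_2}(v) = (f_1+f_2)(\Phi_2^{-1}(v)) = f_1(\Phi_2^{-1}(v)) + f_2(\Phi_2^{-1}(v)) = \overline{f_1}(v) + \overline{f_2}(v),
\]
where the middle equality is just the definition of the sum of two real-valued functions evaluated at the point $\Phi_2^{-1}(v)$. Since $v$ was arbitrary, this gives $\overline{f_1+f_2} = \overline{f_1}+\overline{f_2}$ as functions on $V_2$. The multiplicative identity follows by the identical argument with the product replacing the sum:
\[
	\overline{f_1 f_2}(v) = (f_1 f_2)(\Phi_2^{-1}(v)) = f_1(\Phi_2^{-1}(v)) \, f_2(\Phi_2^{-1}(v)) = \overline{f_1}(v)\,\overline{f_2}(v).
\]

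There is essentially no obstacle here: the statement is the standard fact that pullback by a map is a ring homomorphism on functions, and the proof is the same one-line verification already invoked for the generic-type version. Indeed, the cleanest presentation simply notes that $\overline{(\,\cdot\,)}$ is composition with the fixed mapping $\Phi_2^{-1}$, so the two identities are immediate consequences of how addition and multiplication of functions are defined pointwise; I would mirror the earlier lemma's proof verbatim, remarking only that $\Phi_2^{-1}$ replaces $\Phi_1^{-1}$. The only thing worth stating for rigor is that all functions involved are defined on the common domain $\Phi_2^{-1}(V_2)$, so the compositions make sense and the resulting functions share the common domain $V_2$.
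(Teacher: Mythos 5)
Your proof is correct and is exactly the argument the paper intends: the paper disposes of the generic-type version with the one-line remark that it is a property of composition of mappings, and leaves this non-generic version without further proof, while you simply spell out that same pointwise verification for $\overline{f}=f\circ\Phi_{2}^{-1}$.
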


The Jacobian of $\Phi^{-1}_{2}$ coincides with the inverse matrix $J_{\Phi_{2}}$. Therefore, by using the Gaussian elimination, we have
{\small
\begin{flalign}\label{Jacobian2}
	J_{\Phi_{2}^{-1}}=\left(
		\begin{array}{ccccc}
			\overline{1}& 0& 0&0 &0 \\
			\overline{x(\alpha_{x}+p\alpha_{z})} & -\overline{x} &\overline{x\alpha_{z}} &\overline{x(\alpha_{p}+x\alpha_{z})} &\overline{x(\alpha_{q}+y\alpha_{z})}  \\
			\overline{p+qx(\alpha_{x}+p\alpha_{z})}&-\overline{qx} & \overline{1+qx\alpha_{z}}& \overline{x(1+q(\alpha_{p}+x\alpha_{z}))} & \overline{y+qx(\alpha_{q}+y\alpha_{z})}\\
			0&0 &0 &\overline{1} & 0\\
			0&0 &0 &0 &\overline{1} \\
		\end{array}
	\right).
\end{flalign}
}
Let the second and third components of $\Phi^{-1}_{2}$ be $\phi_{2}$ and $\psi_{2}$, respectively. Then, by (\ref{Jacobian2}), we have
{\small
\begin{flalign}
		\label{dphi2}&d\varphi_{2}=\overline{x(\alpha_{x}+p\alpha_{z})}dx_{0}-\overline{x} dx_{1}+\overline{x\alpha_{z}}dx_{2} + \overline{x(\alpha_{p}+x\alpha_{z})}dx_{3} + \overline{x(\alpha_{q}+y\alpha_{z})}dx_{4},  \\
		\label{dpsi2}
				&d\psi_{2}=\overline{p+qx(\alpha_{x}+p\alpha_{z})}dx_{0}-\overline{qx}dx_{1}+ \overline{1+qx\alpha_{z}}dx_{2}+\overline{x(1+q(\alpha_{p}+x\alpha_{z}))}dx_{3}\\
	\notag &\hspace{78mm}+\overline{y+qx(\alpha_{q}+y\alpha_{z})}dx_{4}.
	\end{flalign}
}
We next show that the function $\overline{y+\alpha x}$ is defined on $J^{1}(2,1)/\Ch(\calI)$.

\begin{lem}\label{lem:beta-function}
	The partial derivative $\overline{y+\alpha x}$ by $x_{0}$ is zero. Namely, $\overline{y + \alpha x}$ is a function defined on $U_{2} (\subset J^{1}(2,1)/\Ch(\calI))$.
\end{lem}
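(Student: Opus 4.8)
The plan is to follow the computational pattern of Lemma~\ref{lem:alpha-function}, now tracking the second component $\varphi_{2}$ of $\Phi_{2}^{-1}$ (the one recovering $y$) instead of $\alpha$. The key preliminary observation is that, since $x_{1}$ and $x_{0}$ are by definition the functions $\alpha$ and $x$, one has $\overline{\alpha}=x_{1}$ and $\overline{x}=x_{0}$; together with $\overline{y}=\varphi_{2}$ and the product rule of Lemma~\ref{lem:property-bar2}, this gives
\[
	\overline{y+\alpha x}=\overline{y}+\overline{\alpha}\,\overline{x}=\varphi_{2}+x_{0}x_{1}.
\]
First I would differentiate this identity with respect to the fiber coordinate $x_{0}$, which reduces the entire statement to computing $(\varphi_{2})_{x_{0}}$.

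That derivative can be read off directly from (\ref{dphi2}): the coefficient of $dx_{0}$ in $d\varphi_{2}$ is $\overline{x(\alpha_{x}+p\alpha_{z})}$, so $(\varphi_{2})_{x_{0}}=\overline{x(\alpha_{x}+p\alpha_{z})}$. Hence
\[
	(\overline{y+\alpha x})_{x_{0}}=\overline{x(\alpha_{x}+p\alpha_{z})}+x_{1}=\overline{x(\alpha_{x}+p\alpha_{z})+\alpha},
\]
where in the last step I use $x_{1}=\overline{\alpha}$ together with the additivity and product rules of Lemma~\ref{lem:property-bar2}. The statement therefore amounts to the pointwise identity $x(\alpha_{x}+p\alpha_{z})+\alpha=0$ on $\Phi_{2}^{-1}(V_{2})$.

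Finally I would invoke the standing hypotheses. Since $\alpha$ is of non-generic type it satisfies (\ref{condition-PDE}), and being of involutive type it satisfies (\ref{condition-alpha1}); as already noted just after (\ref{condition-PDE}), these combine to give exactly $\alpha=-x(\alpha_{x}+p\alpha_{z})$, i.e. $x(\alpha_{x}+p\alpha_{z})+\alpha=0$. Thus $(\overline{y+\alpha x})_{x_{0}}=0$, which is the claim. I do not expect a genuine obstacle here: the only point requiring care is the bookkeeping identification $\overline{\alpha}=x_{1}$, $\overline{x}=x_{0}$, which lets one bypass the full chain-rule computation of Lemma~\ref{lem:alpha-function}. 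As a conceptual alternative one could instead verify directly, using Lemma~\ref{lem:definition-form-Ch} and (\ref{condition-alpha1}), that $d(y+\alpha x)=(dy+\alpha dx)+x\,d\alpha$ annihilates $\Ch(\calI)$, so that $y+\alpha x$ is a first integral and hence descends to $U_{2}$; this route uses only the involutive condition.
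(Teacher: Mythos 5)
Your proposal is correct and follows essentially the same route as the paper: the identity $\overline{y+\alpha x}=\varphi_{2}+x_{0}x_{1}$, reading $(\varphi_{2})_{x_{0}}=\overline{x(\alpha_{x}+p\alpha_{z})}$ from (\ref{dphi2}), and combining the involutive and non-generic conditions into $x(\alpha_{x}+p\alpha_{z})=-\alpha$ is exactly the paper's computation. The alternative you sketch via first integrals is a reasonable remark but is not what the paper does, and your main argument needs no repair.
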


\begin{proof}
	Remark that 
	\[
		\overline{y+\alpha x} =(y+\alpha x) \circ \Phi^{-1}_{2}=\varphi_{2} + x_{0}x_{1},
	\]
	and the function $\alpha$ satisfies the following two partial differential equations:
	\begin{flalign*}
		\alpha_{x}+p\alpha_{z}-\alpha(\alpha_{y}+q\alpha_{z})&=0,\\
		1+x(\alpha_{y}+q\alpha_{z})&=0.
	\end{flalign*}
	Thus we obtain
	\begin{flalign}\label{formula-alpha-x}
		x(\alpha_{x}+p\alpha_{z})=-\alpha.
	\end{flalign}
	Therefore, by Lemma~\ref{lem:property-bar2}, (\ref{dphi2}) and (\ref{formula-alpha-x}), one has
	\begin{flalign*}
		(\overline{y+\alpha x})_{x_{0}}=(\varphi_{2})_{x_{0}} + x_{1} = \overline{x(\alpha_{x}+p\alpha_{z})} + \overline{\alpha}
		=-\overline{\alpha}+\overline{\alpha}=0,
	\end{flalign*}
	which completes the proof.
\end{proof}

\begin{prop}\label{prop:form-I/ChI-2}
	If the function $\alpha$ satisfies $1+x(\alpha_{y}+q\alpha_{z}) = 0$ locally, then the following holds:
	\[
		\calI=\{\pi^{\ast}(dx_{2}+(\overline{y+\alpha x}) dx_{4}), \pi^{\ast}(dx_{3}-x_{1}dx_{4})\}_{\mathrm{diff}},
	\]
	and thus we obtain
	\[
		\tilde{\calI}=\left\{dx_{2}+(\overline{y+\alpha x}) dx_{4},dx_{3}-x_{1}dx_{4}\right\}_{\mathrm{diff}}.
	\]	
\end{prop}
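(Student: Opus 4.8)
The plan is to mirror the proof of Proposition~\ref{prop:form-I/ChI-1}, now using the decomposition $\Phi_{2}$ and the coordinates adapted to the non-generic case. Since $\pi = \pr \circ \Phi_{2}$, it suffices to pull the generators $\omega_{0}$ and $\Psi$ of $\calI$ back to $V_{2} \subset \R \times U_{2}$ by $(\Phi_{2}^{-1})^{\ast}$, to recognize them (after eliminating the fibre coordinate $x_{0}$) as forms coming from $U_{2}$, and then to push the resulting identities back down with $\Phi_{2}^{\ast}$.

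First I would record the easy pullback. Because $x_{1}=\alpha$, $x_{3}=p$ and $x_{4}=q$ are among the coordinate functions, the representative $\Psi = dp-\alpha\,dq$ pulls back at once to $(\Phi_{2}^{-1})^{\ast}\Psi = dx_{3}-x_{1}\,dx_{4}$, which is already a $1$-form on $U_{2}$ with no fibre dependence. The main computation is the pullback of $\omega_{0}=dz-p\,dx-q\,dy$. Writing $\varphi_{2},\psi_{2}$ for the $y$- and $z$-components of $\Phi_{2}^{-1}$, one has $(\Phi_{2}^{-1})^{\ast}\omega_{0}=d\psi_{2}-x_{3}\,dx_{0}-x_{4}\,d\varphi_{2}$, and I would substitute the explicit expressions (\ref{dphi2}) and (\ref{dpsi2}). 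The bookkeeping here is the delicate part: using the multiplicativity of the bar-operation in Lemma~\ref{lem:property-bar2} together with $\overline{x}=x_{0}$, $\overline{p}=x_{3}$, $\overline{q}=x_{4}$, I expect the coefficients of $dx_{0}$ and $dx_{1}$ to cancel identically and the remaining coefficients to collapse, giving
\[
	(\Phi_{2}^{-1})^{\ast}\omega_{0} = dx_{2} + x_{0}\,dx_{3} + \overline{y}\,dx_{4}.
\]
This still carries the fibre coordinate $x_{0}$, so, exactly as before, I would eliminate it by forming $\omega_{0}-x\Psi$; since $\overline{x}\,\overline{\alpha}=\overline{\alpha x}$, this yields
\[
	(\Phi_{2}^{-1})^{\ast}(\omega_{0}-x\Psi) = dx_{2} + \overline{y+\alpha x}\,dx_{4}.
\]

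At this point the one genuine input from the non-generic hypothesis enters: Lemma~\ref{lem:beta-function} guarantees that $\overline{y+\alpha x}$ is independent of $x_{0}$, so the right-hand side is a genuine $1$-form on $U_{2}$. Finally, using $\pi = \pr \circ \Phi_{2}$ and the identity $\Phi_{2}^{\ast}(\Phi_{2}^{-1})^{\ast}=\id$, I would read off $\pi^{\ast}(dx_{2}+\overline{y+\alpha x}\,dx_{4}) = \omega_{0}-x\Psi$ and $\pi^{\ast}(dx_{3}-x_{1}\,dx_{4}) = \Psi$. Since $\{\omega_{0},\Psi\}_{\mathrm{diff}} = \{\omega_{0}-x\Psi,\Psi\}_{\mathrm{diff}}$ (the two pairs span the same $C^{\infty}$-module of $1$-forms), this gives $\calI = \{\pi^{\ast}(dx_{2}+\overline{y+\alpha x}\,dx_{4}),\ \pi^{\ast}(dx_{3}-x_{1}\,dx_{4})\}_{\mathrm{diff}}$, and hence the stated form of $\tilde{\calI}$. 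The only real obstacle is the cancellation in the $\omega_{0}$-pullback; everything else is a transcription of the generic argument with the roles of the two ``bar'' lemmas interchanged, since here it is the $\omega_{0}$-coefficient, rather than the $\Psi$-coefficient, that must be shown to be fibre-independent.
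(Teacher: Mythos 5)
Your proposal is correct and follows essentially the same route as the paper's own proof: pulling back $\omega_{0}$ and $\Psi$ via $(\Phi_{2}^{-1})^{\ast}$, obtaining $dx_{2}+x_{0}dx_{3}+\overline{y}\,dx_{4}$ and $dx_{3}-x_{1}dx_{4}$, eliminating the fibre coordinate by forming $\omega_{0}-x\Psi$, invoking Lemma~\ref{lem:beta-function} for fibre-independence of $\overline{y+\alpha x}$, and pushing back down with $\Phi_{2}^{\ast}$ using $\pi=\pr\circ\Phi_{2}$. The cancellation of the $dx_{0}$ and $dx_{1}$ coefficients that you flag does hold exactly as you expect (using $\overline{p}=x_{3}$, $\overline{q}=x_{4}$ in (\ref{dpsi2}) and (\ref{dphi2})), and your explicit remark that $\{\omega_{0},\Psi\}_{\mathrm{diff}}=\{\omega_{0}-x\Psi,\Psi\}_{\mathrm{diff}}$ makes a step the paper leaves implicit.
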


\begin{proof}
	In order to find the differential forms which generate $\tilde{\calI}$, we pull back $\omega_{0}$ and $\Psi$ to $V_{2} (\subset \R \times J^{1}(2,1)/\Ch(\calI))$. By (\ref{dphi2}) and (\ref{dpsi2}), we obtain
	\begin{flalign}
		\notag(\Phi_{2}^{-1})^{\ast}\omega_{0}
		\notag&=d\psi_{2} -x_{3}dx_{0}-x_{4}d\varphi_{2} \\
		\label{righhandside-defform2}&=dx_{2}+x_{0}dx_{3}+\overline{y}dx_{4},
	\end{flalign}
	and 
	\begin{flalign}\label{phi-inverse-form2}
		(\Phi_{2}^{-1})^{\ast}\Psi = dx_{3}-x_{1}dx_{4}.
	\end{flalign}
	Then, the right-hand side of (\ref{phi-inverse-form2}) is a 1-form on $U_{2}$. However, since the right-hand side of (\ref{righhandside-defform2}) has a fiber coordinate $x_{0}$, it is not a 1-form on $U_{2}$. To eliminate $x_{0}$, we perform the following calculation:
	\begin{flalign}
		\notag(\Phi_{2}^{-1})^{\ast}(\omega_{0}-x\Psi)&=dx_{2}+x_{0}dx_{3}+\overline{y}dx_{4}-x_{0}(dx_{3}-\overline{\alpha}dx_{4})\\
		\label{defform-omega0-xPsi2}&=dx_{2}+\overline{y+\alpha x}dx_{4}.
	\end{flalign}
	By Lemma~\ref{lem:beta-function}, the function $\overline{y+\alpha x}$ is on $U_{2}$. Therefore, the 1-form (\ref{defform-omega0-xPsi2}) is on $U_{2}$. 

From the above discussion, we find a candidate for a generator of $\tilde{\calI}$. We next show that these are the desired 1-forms. Then we have
	\begin{flalign*}
		\pi^{\ast}(dx_{2}+\overline{y+\alpha x}dx_{4})&=(\Phi_{2}^{\ast}\circ (\pr|_{V_{2}})^{\ast})(dx_{2}+\overline{y+\alpha x}dx_{4})=\Phi_{2}^{\ast} (\Phi_{2}^{-1})^{\ast}(\omega_{0}-x\Psi)\\
		&=\omega_{0}-x\Psi,\\
		\pi^{\ast}(dx_{3}-x_{1}dx_{4})&=(\Phi_{2}^{\ast}\circ (\pr|_{V_{2}})^{\ast})(dx_{3}-x_{1}dx_{4})=\Phi_{2}^{\ast}(\Phi_{2}^{-1})^{\ast}(\Psi)=\Psi.
	\end{flalign*}
	Therefore, one has 
	\[
		\calI=\{\pi^{\ast}(dx_{2}+\overline{y+\alpha x}dx_{4}),\pi^{\ast}(dx_{3}-x_{1}dx_{4})\}_{\mathrm{diff}}
	\]
	and thus 
	\[
		\tilde{\calI}=\{dx_{2}+\overline{y+\alpha x}dx_{4},dx_{3}-x_{1}dx_{4}\}_{\mathrm{diff}}.
	\]
	This completes the proof.
\end{proof}

\section{Derived system of the reduced GMAS}
In this section, we study derived systems of the reduced GMAS $\tilde{\calI}$. First of all, we define some basic notions from the theory of exterior differential systems. For the details, see \cite{BCG3,Cfb}.

\begin{Def}
	Let $M$ be a manifold, and $J$ be a subbundle of $T^{\ast}M$ with rank $k$. For a local basis $\alpha_{1},\dots,\alpha_{k}$ of $J$ defined on an open set $U$ of $M$, we define
	\[
		\mathcal{J}|_{U}:=\{\alpha_{1},\dots,\alpha_{k}\}_{\mathrm{diff}}.
	\]
	Then the differential ideal $\mathcal{J}$ is called a \textit{Pfaffian system} of rank $k$, and the subbundle $J$ is called the \textit{corresponding subbundle} of $\mathcal{J}$.
\end{Def}

\begin{rem}
	We recall that the GMAS $\calI$ is written by $\calI=\{\omega_{0},\Psi\}_{\mathrm{diff}}$, where
	\[
		\omega_{0}=dz-pdx-qdy, \hspace{5mm}\Psi \equiv dp-\alpha(x,y,z,p,q)dq \mod \omega_{0}.
	\]
	Since these are linearly independent 1-forms, $\calI$ is a Pfaffian system of rank 2.
\end{rem}

We next define derived systems of Pfaffian systems. Let $\mathcal{J}$ be a Pfaffian system on a manifold $M$, and $J$ be the corresponding subbundle of $\mathcal{J}$. Then, the following map
\[
	\delta\colon \Gamma(J) \to \Gamma\left(\bigwedge^{2} (T^{\ast}M/J)\right);\ \theta \mapsto d\theta \bmod J,
\]
induces the bundle map $\delta' \colon J \to \bigwedge^{2} (T^{\ast}M/J)$, since $\delta$ is a $C^{\infty}(M)$-linear map. Then, we put
\[
	J^{(1)}:=\Ker \delta'
\]
and $J^{(1)}$ is called the \textit{first derived system} of $J$. Inductively, if $J^{(k-1)}$ is a subbundle of $T^{\ast}M$, then we define $J^{(k)}:=(J^{(k-1)})^{(1)}$ and $J^{(k)}$ is called the \textit{$k$-th derived system}. 

\begin{rem}
	For derived systems of a Pfaffian system $J$, it is known that there exists $N \in \N$ such that 
\[
	J \supsetneq J^{(1)} \supsetneq J^{(2)} \supsetneq \cdots \supsetneq J^{(N)}=J^{(N+1)}=\cdots.
\]
	The number $N$ is called the \textit{derived length} of $J$.
\end{rem}

\begin{Def}
	Let $\mathcal{J}$ be a Pfaffian system on a manifold $M$, and $N$ be its derived length. We denote the corresponding subbundle of $\mathcal{J}$ by $J$. The pair
	\[
		(\corank J,\corank J^{(1)},\ldots,\corank J^{(N)})
	\] 
	is called the \textit{derived type} of $\mathcal{J}$. Then $\mathcal{J}$ is called of type $(\corank J,\corank J^{(1)},\ldots,$ $\corank J^{(N)})$.
\end{Def}

\subsection{Case of generic type}
In this subsection, for the GMAS $\calI$ which is of generic type, we study the derived systems of $\tilde{\calI}$. The following theorem is the main result in this subsection.
\begin{thm}\label{thm:criterion1}
	For the GMAS $\calI$ and $p_{0} \in J^{1}(2,1)$, the following holds:
	\begin{enumerate}[\normalfont (1)]
		\item $\tilde{\calI}$ is of type $(2,3)$ at $\pi(p_{0})$ if and only if $\alpha$ satisfies the following:
	\begin{flalign}\label{discriminant1-1}
			\left\{
				\begin{array}{l}
					\alpha_{p}(\alpha_{y}+q\alpha_{z})-\alpha(\alpha_{y}+q\alpha_{z})_{p}-(\alpha_{y}+q\alpha_{z})_{q}-\alpha_{z}=0 \\
					\alpha_{yy}+2q\alpha_{yz} +q^{2}\alpha_{zz}=0
				\end{array}
			\right. \hspace{4mm} \text{at $p_{0}$}.
		\end{flalign}
		\item $\tilde{\calI}$ is of type $(2,3,4)$ at $\pi(p_{0})$ if and only if $\alpha$ does not satisfy (\ref{discriminant1-1}) at $p_{0}$.
		\end{enumerate}
\end{thm}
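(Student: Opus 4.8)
The plan is to compute the derived flag of the reduced Pfaffian system $\tilde{\calI}$ explicitly using the generators furnished by Proposition~\ref{prop:form-I/ChI-1}. Recall that in the generic case $\tilde{\calI}=\{\xi_1,\xi_2\}_{\mathrm{diff}}$ with $\xi_1=dx_2+x_1\,dx_4$ and $\xi_2=dx_3-\overline{\alpha}\,dx_4$, where $\overline{\alpha}$ is a function on $U_1$ with coordinates $(x_1,x_2,x_3,x_4)$. Since $\corank J = 2$ automatically (the corresponding subbundle $J$ has rank $2$ in the $4$-dimensional $T^{\ast}U_1$), the whole statement reduces to controlling the first derived system $J^{(1)}$ and, when it drops, the second derived system $J^{(2)}$.

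First I would compute $d\xi_1$ and $d\xi_2$ modulo $J=\Span\{\xi_1,\xi_2\}$, expressing each as a $2$-form in the quotient $\bigwedge^2(T^\ast U_1/J)$. Writing $dx_4$ and one further $1$-form (say $dx_1$ or a suitable combination) as a basis of the quotient, $d\xi_1\equiv dx_1\wedge dx_4$ is already nonzero, so $\xi_1\notin J^{(1)}$ generically; the key is $d\xi_2\equiv -d\overline{\alpha}\wedge dx_4 \bmod J$, and I would expand $d\overline{\alpha}=\overline{\alpha}_{x_1}dx_1+\overline{\alpha}_{x_2}dx_2+\overline{\alpha}_{x_3}dx_3+\overline{\alpha}_{x_4}dx_4$, then reduce $dx_2\equiv -x_1dx_4$ and $dx_3\equiv \overline{\alpha}\,dx_4$ modulo $J$. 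This collapses $d\overline{\alpha}\wedge dx_4$ to a single multiple of $dx_1\wedge dx_4$ (after also reducing $dx_1$; one must be careful which $1$-forms survive in the rank-$2$ quotient). The map $\delta'$ is then described by a $2\times 1$ matrix of functions, and $J^{(1)}$ is its kernel: generically $\delta'$ has rank $2$, giving $J^{(1)}=0$ and type $(2,3,4)$; the first derived system fails to drop to rank $1$ precisely when a combination of the coefficients of $d\xi_1,d\xi_2$ vanishes, yielding $\corank J^{(1)}=3$.

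The main computational step is translating the $\overline{\alpha}$-derivatives back into the original variables. I expect the condition ``$\xi_2-c\,\xi_1$ lies in $J^{(1)}$ for some function $c$'' to produce, after converting $\overline{\alpha}_{x_i}$ into $\alpha$-derivatives via the inverse Jacobian $J_{\Phi_1^{-1}}$ in~(\ref{Jacobian1}) and using $(\overline{\alpha})_{x_0}=0$ from Lemma~\ref{lem:alpha-function}, exactly the first equation of~(\ref{discriminant1-1}), namely $\alpha_p(\alpha_y+q\alpha_z)-\alpha(\alpha_y+q\alpha_z)_p-(\alpha_y+q\alpha_z)_q-\alpha_z=0$. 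Then, assuming $J^{(1)}$ has rank $1$, I would repeat the derived-system construction one level down: take the surviving generator $\theta$ of $J^{(1)}$, compute $d\theta \bmod J^{(1)}$, and determine when it vanishes. The closure condition $d\theta\equiv 0$ on $J^{(1)}$ should reduce to the second equation $\alpha_{yy}+2q\alpha_{yz}+q^2\alpha_{zz}=0$, in which case $J^{(2)}=J^{(1)}$ is rank $1$ and the type is $(2,3)$; otherwise $J^{(2)}=0$ and the type is $(2,3,4)$.

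The hard part will be the bookkeeping in the change of variables: the operator $\partial/\partial x_4=\partial/\partial q$ along the fibration mixes the original $q$-dependence with the $x_3=p$ and $x_2=z-px-qy$ dependencies, so expressions like $(\alpha_y+q\alpha_z)_q$ and $(\alpha_y+q\alpha_z)_p$ must be read off carefully from~(\ref{Jacobian1}) rather than naively. Once I confirm that the first drop in corank is governed by the first equation of~(\ref{discriminant1-1}) and the second drop by the second equation, parts (1) and (2) follow immediately, since type $(2,3)$ means the flag stabilizes at rank $1$ (both equations hold) while type $(2,3,4)$ means it continues down to $0$ (at least one equation fails). I would present the equivalences in the order: reduce to $J^{(1)}$ and $J^{(2)}$, compute $\delta'$, extract the first equation from $\corank J^{(1)}=3$, then extract the second from $\corank J^{(2)}=\corank J^{(1)}$, and finally assemble the two cases.
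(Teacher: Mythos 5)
There is a genuine structural gap in your plan: you have mislocated where the two equations of (\ref{discriminant1-1}) arise in the derived flag, and your ``generic'' first step is impossible. On the $4$-dimensional leaf space $U_{1}$ the corresponding subbundle $J=\Span\{\xi_{1},\xi_{2}\}$ has rank $2$, so $T^{\ast}U_{1}/J$ has rank $2$ and $\bigwedge^{2}(T^{\ast}U_{1}/J)$ is a \emph{line} bundle; hence $\delta'\colon J\to \bigwedge^{2}(T^{\ast}U_{1}/J)$ has rank at most $1$ and $J^{(1)}$ has rank at least $1$. Your claim that ``generically $\delta'$ has rank $2$, giving $J^{(1)}=0$ and type $(2,3,4)$'' is therefore impossible; moreover, if $J^{(1)}$ were $0$, the derived type would be $(2,4)$, not $(2,3,4)$. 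In fact, since $d\xi_{1}\equiv dx_{1}\wedge dx_{4}\not\equiv 0$ and $d\xi_{2}\equiv -(\overline{\alpha})_{x_{1}}\,dx_{1}\wedge dx_{4} \bmod J$, the combination $\eta_{2}:=\xi_{2}+(\overline{\alpha})_{x_{1}}\xi_{1}$ satisfies $d\eta_{2}\equiv 0 \bmod J$ \emph{unconditionally}, so $J^{(1)}=\Span\{\eta_{2}\}$ has rank exactly $1$ for every $\alpha$: no equation on $\alpha$ is produced at the first step, which is precisely why the theorem's dichotomy is only between type $(2,3)$ and type $(2,3,4)$.

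Consequently both equations of (\ref{discriminant1-1}) must come out of the \emph{single} second-step condition $d\eta_{2}\equiv 0\bmod \eta_{2}$. Computing in the rank-$3$ quotient $T^{\ast}U_{1}/J^{(1)}$ one finds, as in the paper's Lemma~\ref{lem:criterion-distribution1},
\[
	d\eta_{2}\equiv \bigl(x_{1}(\overline{\alpha})_{x_{1}x_{2}}-\overline{\alpha}(\overline{\alpha})_{x_{1}x_{3}}-(\overline{\alpha})_{x_{1}x_{4}}+(\overline{\alpha})_{x_{1}}(\overline{\alpha})_{x_{3}}-(\overline{\alpha})_{x_{2}}\bigr)\,\eta_{1}\wedge dx_{4}+(\overline{\alpha})_{x_{1}x_{1}}\,dx_{1}\wedge \eta_{1} \mod \eta_{2},
\]
and the coefficients of the two independent $2$-forms $\eta_{1}\wedge dx_{4}$ and $dx_{1}\wedge \eta_{1}$ give the two equations of (\ref{discriminant1-2}) \emph{simultaneously}: $(\tilde{I})^{(2)}=(\tilde{I})^{(1)}$ iff both vanish, otherwise $(\tilde{I})^{(2)}=\{0\}$. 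A further wrinkle your staged plan would miss: when translating back to $\alpha$ via $J_{\Phi_{1}^{-1}}$ and Lemma~\ref{lem:alpha-function} (that mechanism in your proposal is correct), the first equation of (\ref{discriminant1-2}) simplifies to the first equation of (\ref{discriminant1-1}) only after substituting $\alpha_{yy}+2q\alpha_{yz}+q^{2}\alpha_{zz}=0$, so the two conditions are coupled in the translation and cannot be extracted one per derived step. The computation itself is repairable along the lines you set up, but the logical skeleton (first equation $\Leftrightarrow$ first corank drop, second equation $\Leftrightarrow$ second drop) is wrong as stated.
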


In order to show Theorem~\ref{thm:criterion1}, we prepare some lemmas.

\begin{lem}\label{lem:criterion-distribution1}
	For the GMAS $\calI$ and $p_{0} \in J^{1}(2,1)$, the following holds:
		\begin{enumerate}[\normalfont (1)]
			\item $\tilde{\calI}$ is of type $(2,3)$ at $\pi(p_{0})$ if and only if $\overline{\alpha}$ satisfies	
			\begin{flalign}\label{discriminant1-2}
			\left\{
				\begin{array}{l}
					x_{1}(\overline{\alpha})_{x_{1}x_{2}}-\overline{\alpha}(\overline{\alpha})_{x_{1}x_{3}}-(\overline{\alpha})_{x_{1}x_{4}}+(\overline{\alpha})_{x_{1}}(\overline{\alpha})_{x_{3}}-(\overline{\alpha})_{x_{2}}=0\\
					(\overline{\alpha})_{x_{1}x_{1}}=0,
				\end{array}
			\right. \hspace{4mm} \text{at $p_{0}$}.
		\end{flalign}
		\item $\tilde{\calI}$ is of type $(2,3)$ at $\pi(p_{0})$ if and only if $\overline{\alpha}$ does not satisfy (\ref{discriminant1-2}) at $p_{0}$. 
		\end{enumerate}	
\end{lem}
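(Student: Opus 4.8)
The plan is to compute the derived flag of the reduced system directly from the generators produced by Proposition~\ref{prop:form-I/ChI-1}. Set $\xi_1 := dx_2 + x_1\,dx_4$ and $\xi_2 := dx_3 - \overline{\alpha}\,dx_4$, so that $\tilde{\calI} = \{\xi_1,\xi_2\}_{\mathrm{diff}}$ is a Pfaffian system whose corresponding subbundle $J$ has rank $2$ and hence $\corank J = 2$. First I would determine the first derived system $J^{(1)}$. From $d\xi_1 = dx_1\wedge dx_4$ and $d\xi_2 = -d\overline{\alpha}\wedge dx_4$, together with the congruences $dx_2 \equiv -x_1\,dx_4$ and $dx_3 \equiv \overline{\alpha}\,dx_4 \mod J$, one reduces
\[
	d\xi_1 \equiv dx_1\wedge dx_4, \qquad d\xi_2 \equiv -(\overline{\alpha})_{x_1}\,dx_1\wedge dx_4 \mod J.
\]
Since $d\xi_1\not\equiv 0$, the induced bundle map $\delta'$ has rank one identically; therefore $J^{(1)}$ is a line subbundle, $\corank J^{(1)} = 3$, and $J^{(1)}$ is generated by $\zeta := (\overline{\alpha})_{x_1}\,\xi_1 + \xi_2$. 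In particular the first two entries of the derived type are $2$ and $3$ regardless of $\overline{\alpha}$.

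It remains to decide whether the flag stabilizes at $J^{(1)}$, in which case $\tilde{\calI}$ is of type $(2,3)$, or drops to $J^{(2)} = 0$, in which case it is of type $(2,3,4)$. Because $J^{(1)} = \Span\{\zeta\}$ has rank one, $J^{(2)} = J^{(1)}$ exactly when $\zeta$ is Frobenius integrable, that is, when $\zeta\wedge d\zeta = 0$, and $J^{(2)} = 0$ otherwise. Thus everything reduces to evaluating the $3$-form $\zeta\wedge d\zeta$. Expanding $\zeta = (\overline{\alpha})_{x_1}\,dx_2 + dx_3 + \big((\overline{\alpha})_{x_1}x_1 - \overline{\alpha}\big)\,dx_4$, I would compute $d\zeta$ and then $\zeta\wedge d\zeta$, writing the result in the basis $\{dx_1\wedge dx_2\wedge dx_3,\ dx_1\wedge dx_2\wedge dx_4,\ dx_1\wedge dx_3\wedge dx_4,\ dx_2\wedge dx_3\wedge dx_4\}$ of $\bigwedge^{3}$.

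The one genuinely computational step is this last expansion, and the whole statement hinges on the shape of the four resulting coefficients. I expect the coefficients of $dx_1\wedge dx_2\wedge dx_3$, $dx_1\wedge dx_2\wedge dx_4$ and $dx_1\wedge dx_3\wedge dx_4$ to come out as $(\overline{\alpha})_{x_1x_1}$, $-\overline{\alpha}\,(\overline{\alpha})_{x_1x_1}$ and $-x_1\,(\overline{\alpha})_{x_1x_1}$, so that they vanish simultaneously if and only if $(\overline{\alpha})_{x_1x_1} = 0$, the second equation of (\ref{discriminant1-2}). After substituting $(\overline{\alpha})_{x_1}$ for the auxiliary coefficient and cancelling the matching quadratic terms, the coefficient of $dx_2\wedge dx_3\wedge dx_4$ should reduce to the negative of the left-hand side of the first equation of (\ref{discriminant1-2}). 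Hence $\zeta\wedge d\zeta = 0$ at $p_0$ precisely when both equations of (\ref{discriminant1-2}) hold at $p_0$; this gives type $(2,3)$, while the failure of (\ref{discriminant1-2}) forces $J^{(2)} = 0$, i.e.\ $\corank J^{(2)} = 4$ and type $(2,3,4)$, establishing both equivalences.
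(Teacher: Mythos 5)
Your proposal is correct and follows essentially the same route as the paper: you obtain the same first derived system, since your $\zeta=(\overline{\alpha})_{x_1}\xi_1+\xi_2$ is exactly the paper's $\eta_2=\xi_2+(\overline{\alpha})_{x_1}\xi_1$, and your predicted coefficients are right --- expanding $\zeta\wedge d\zeta$ indeed yields $(\overline{\alpha})_{x_1x_1}$, $-\overline{\alpha}(\overline{\alpha})_{x_1x_1}$, $-x_1(\overline{\alpha})_{x_1x_1}$ and minus the first expression of (\ref{discriminant1-2}), matching the paper's computation of $d\eta_2\equiv(\cdots)\,\eta_1\wedge dx_4+(\overline{\alpha})_{x_1x_1}\,dx_1\wedge\eta_1 \bmod \eta_2$. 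The only (cosmetic) difference is that you test stabilization of the rank-one bundle $J^{(1)}=\Span\{\zeta\}$ via the equivalent Frobenius condition $\zeta\wedge d\zeta=0$ rather than by reducing $d\zeta$ modulo $\zeta$ as the paper does.
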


\begin{proof}
	By Proposition~\ref{prop:form-I/ChI-1}, we have
	\[
		\tilde{\calI}=\{dx_{2}+x_{1}dx_{4},dx_{3}-\overline{\alpha}dx_{4}\}_{\mathrm{diff}},
	\]
	and thus the corresponding subbundle $\tilde{I}$ of $\tilde{\calI}$ is obtained by
	\[
		\tilde{I}=\Span\{dx_{2}+x_{1}dx_{4},dx_{3}-\overline{\alpha}dx_{4}\}.
	\]
	Then we put
	\[
		\xi_{1}:=dx_{2}+x_{1}dx_{4},\hspace{3mm}\xi_{2}:=dx_{3}-\overline{\alpha}dx_{4},
	\]
	and take a local coframe $\{\xi_{1},\xi_{2},dx_{1},dx_{4}\}$ of $T^{\ast}U_{1}$. We obtain
	\begin{flalign*}
		d\xi_{1}&=dx_{1}\wedge dx_{4},  \\
		d\xi_{2} &=-d\overline{\alpha} \wedge dx_{3}
		=-((\overline{\alpha})_{x_{1}}dx_{1}+(\overline{\alpha})_{x_{2}}dx_{2}+(\overline{\alpha})_{x_{3}}dx_{3}+(\overline{\alpha})_{x_{4}}dx_{4}) \wedge dx_{4}\\
		&\equiv -(\overline{\alpha})_{x_{1}} dx_{1}\wedge dx_{4} \mod \xi_{1},\xi_{2}.
	\end{flalign*}
	Therefore we put
	\begin{flalign*}
		\eta_{1}:=\xi_{1},\hspace{3mm}
		\eta_{2}:=\xi_{2}+(\overline{\alpha})_{x_{1}} \xi_{1}=dx_{3}+(\overline{\alpha})_{x_{1}}dx_{2}+(x_{1}(\overline{\alpha})_{x_{1}}-\overline{\alpha})dx_{4},
	\end{flalign*}
	and change the defining 1-forms $\xi_{1}$ and $\xi_{2}$ of $\tilde{I}$ to $\eta_{1}$ and $\eta_{2}$. Then we have
	\begin{flalign*}
		d\eta_{1} &=dx_{1}\wedge dx_{4} \not \equiv 0 \mod \eta_{1},\eta_{2}, \\
		d\eta_{2} &\equiv d\xi_{2}+ (\overline{\alpha})_{x_{1}} d\xi_{1} \equiv 0 \mod \eta_{1},\eta_{2},
	\end{flalign*}
	and thus, one has $(\tilde{I})^{(1)}=\Span \{\eta_{2}\}$. Moreover, we have
	\begin{flalign*}
		d\eta_{2} &= d(\overline{\alpha})_{x_{1}}\wedge dx_{2} + d(x_{1}(\overline{\alpha})_{x_{1}}-\overline{\alpha}) \wedge dx_{4} \\
		&=((\overline{\alpha})_{x_{1}x_{1}}dx_{1} + (\overline{\alpha})_{x_{1}x_{3}}dx_{3}+(\overline{\alpha})_{x_{1}x_{4}}dx_{4}) \wedge dx_{2} \\
		&\hspace{20mm}+((\overline{\alpha})_{x_{1}x_{1}}dx_{1} + (x_{1}(\overline{\alpha})_{x_{1}x_{2}} -(\overline{\alpha})_{x_{2}})dx_{2}\\
		&\hspace{40mm}+ (x_{1}(\overline{\alpha})_{x_{1}x_{3}} -(\overline{\alpha})_{x_{3}})dx_{3})\wedge dx_{4} \\
		&\equiv ((\overline{\alpha})_{x_{1}x_{1}}dx_{1}+(\overline{\alpha})_{x_{1}x_{3}}(\overline{\alpha}-x_{1}(\overline{\alpha})_{x_{1}})dx_{4} + (\overline{\alpha})_{x_{1}x_{4}}dx_{4})\wedge dx_{2} \\
		&\hspace{20mm}+(x_{1}(\overline{\alpha})_{x_{1}x_{1}}dx_{1} +(x_{1}(\overline{\alpha})_{x_{1}x_{2}}-(\overline{\alpha})_{x_{2}})dx_{2} \\
		&\hspace{40mm}-(\overline{\alpha})_{x_{1}}(x_{1}(\overline{\alpha})_{x_{1}x_{3}} - (\overline{\alpha})_{x_{3}})dx_{2}) \wedge dx_{4} \mod \eta_{2} \\
		&\equiv (x_{1}(\overline{\alpha})_{x_{1}x_{2}}-\overline{\alpha}(\overline{\alpha})_{x_{1}x_{3}}-(\overline{\alpha})_{x_{1}x_{4}}+(\overline{\alpha})_{x_{1}}(\overline{\alpha})_{x_{3}}-(\overline{\alpha})_{x_{2}}) \eta_{1} \wedge dx_{4}\\
		&\hspace{70mm}+ (\overline{\alpha})_{x_{1}x_{1}} dx_{1} \wedge \eta_{1} \mod \eta_{2}.
	\end{flalign*}
	Therefore, if $\overline{\alpha}$ satisfies (\ref{discriminant1-2}), then we have
	\[
		d\eta_{2} \equiv 0 \mod \eta_{2},
	\]
	and thus $(\tilde{I})^{(2)}=(\tilde{I})^{(1)}$, otherwise $(\tilde{I})^{(2)}=\{0\}$. This completes the proof.
\end{proof}

In order to express the condition (\ref{discriminant1-2}) by using functions defined on $J^{1}(2,1)$, we next calculate the partial derivatives of $\overline{\alpha}$.

\begin{lem}\label{lem:relation-alpha1}
	For the function $\overline{\alpha}$, the following holds:
	\begin{enumerate}[\normalfont (1)]
		\item $(\overline{\alpha})_{x_{1}}(\overline{\alpha})_{x_{3}}-(\overline{\alpha})_{x_{2}}=\overline{\rho^{2}(\alpha_{p}(\alpha_{y}+q \alpha_{z})-\alpha_{z})}$.
		\item $(\overline{\alpha})_{x_{1}x_{1}}=\overline{\rho^{2}(\alpha_{yy}+2q\alpha_{yz}+q^{2}\alpha_{zz})}$.
		\item $(\overline{\alpha})_{x_{1}x_{2}}=\overline{\rho^{2}(\alpha_{yz}+q\alpha_{zz}-\rho x \alpha_{z}(\alpha_{yy}+2q\alpha_{yz}+q^{2}\alpha_{zz}))} $ .
		\item $(\overline{\alpha})_{x_{1}x_{3}}=\overline{\rho^{2}(\alpha_{yp}+q \alpha_{zp}+x(\alpha_{yz}+q \alpha_{zz})}$
		
		\hspace{30mm}$\overline{-\rho x (\alpha_{p}+x\alpha_{z})(\alpha_{yy}+2q\alpha_{yz}+q^{2}\alpha_{zz}))}$.
		\item $(\overline{\alpha})_{x_{1}x_{4}}=\overline{\rho^{2}(y(\alpha_{yz}+q\alpha_{zz})+\alpha_{yq}+q\alpha_{zq}+\alpha_{z}}$
		
		\hspace{30mm}$\overline{-\rho x (\alpha_{q}+y\alpha_{z})(\alpha_{yy}+2q\alpha_{yz}+q^{2}\alpha_{zz}))}$.
	\end{enumerate}
\end{lem}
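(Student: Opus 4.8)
The plan is to obtain all five quantities by differentiating $\overline{\alpha}$ through the chain rule, reading the required partial derivatives of the coordinate functions $x,y,z,p,q$ directly off the Jacobian $J_{\Phi_1^{-1}}$ displayed in (\ref{Jacobian1}). The crucial observation is that in the relevant columns of $J_{\Phi_1^{-1}}$ only the rows corresponding to $y$ and $z$ are nontrivial, so that for any function $h$ defined on $\Phi_1^{-1}(V_1)$ one obtains transport rules of the form
\begin{flalign*}
	\partial_{x_1}\overline{h}=\overline{\rho(h_y+qh_z)},\qquad \partial_{x_2}\overline{h}=\overline{h_z-\rho x\alpha_z(h_y+qh_z)},
\end{flalign*}
and analogous expressions for $\partial_{x_3}$ and $\partial_{x_4}$, where $\rho=1/(1+x(\alpha_y+q\alpha_z))$. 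The single algebraic identity that drives every simplification is $x\rho(\alpha_y+q\alpha_z)=1-\rho$, which is merely a restatement of the definition of $\rho$.

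First I would compute the four first-order derivatives. Applying the transport rules and collapsing the $x$-dependent terms by means of $x\rho(\alpha_y+q\alpha_z)=1-\rho$ yields the clean expressions
\begin{flalign*}
	(\overline{\alpha})_{x_1}=\overline{\rho(\alpha_y+q\alpha_z)},\quad (\overline{\alpha})_{x_2}=\overline{\rho\alpha_z},\quad (\overline{\alpha})_{x_3}=\overline{\rho(\alpha_p+x\alpha_z)},\quad (\overline{\alpha})_{x_4}=\overline{\rho(\alpha_q+y\alpha_z)}.
\end{flalign*}
Part (1) is then immediate: forming the product $(\overline{\alpha})_{x_1}(\overline{\alpha})_{x_3}$, subtracting $(\overline{\alpha})_{x_2}$, and invoking $x\rho(\alpha_y+q\alpha_z)=1-\rho$ once more makes the stray $\alpha_z$ terms cancel, leaving precisely $\overline{\rho^2(\alpha_p(\alpha_y+q\alpha_z)-\alpha_z)}$.

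For parts (2)--(5) I would differentiate the first-order expression $(\overline{\alpha})_{x_1}=\overline{\rho(\alpha_y+q\alpha_z)}$ a second time, in the directions $x_1,x_2,x_3,x_4$, again via the transport rules. The new feature is that $\rho$ itself must now be differentiated; from its definition one has $\rho_y=-\rho^2x(\alpha_y+q\alpha_z)_y$ and $\rho_z=-\rho^2x(\alpha_y+q\alpha_z)_z$, and the combination $(\alpha_y+q\alpha_z)_y+q(\alpha_y+q\alpha_z)_z=\alpha_{yy}+2q\alpha_{yz}+q^2\alpha_{zz}$ is exactly the second-order expression appearing in the statement. Repeatedly using $x\rho(\alpha_y+q\alpha_z)=1-\rho$ to absorb the fiber-coordinate dependence, each second derivative reduces to the recorded form: the purely $x_1$-directional derivative produces the factor $\alpha_{yy}+2q\alpha_{yz}+q^2\alpha_{zz}$, while the mixed derivatives $(\overline{\alpha})_{x_1x_2}$, $(\overline{\alpha})_{x_1x_3}$, $(\overline{\alpha})_{x_1x_4}$ additionally pick up the extra first-order terms listed in (3)--(5).

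The main obstacle is purely the bookkeeping in these nested chain-rule computations: one must track every occurrence of $\rho$ together with its derivatives and verify that all terms carrying the fiber coordinate $x$ (and $y$) either cancel or reassemble into $\overline{(\cdots)}$ of a genuine function on $U_1$, consistently with Lemma~\ref{lem:alpha-function}. I expect no conceptual difficulty beyond confirming that these algebraic cancellations close up correctly; note in particular that the nonvanishing prefactors $\overline{\rho^k}$ are irrelevant for the later application, since condition (\ref{discriminant1-2}) only concerns the vanishing of the bracketed expressions.
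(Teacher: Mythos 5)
Your proposal is correct and follows essentially the same route as the paper, which likewise obtains the first derivatives of $\overline{\alpha}$ by the chain rule through the Jacobian (\ref{Jacobian1}), simplifies with the identity $\rho x(\alpha_{y}+q\alpha_{z})=1-\rho$, proves item (1) exactly as you do, and dismisses (2)--(5) as ``similar''. One point worth recording: if you carry your second-derivative computation through faithfully (using $h=\rho(\alpha_{y}+q\alpha_{z})$, $h_{y}=\rho^{2}(\alpha_{y}+q\alpha_{z})_{y}$, $h_{z}=\rho^{2}(\alpha_{y}+q\alpha_{z})_{z}$), the pure $x_{1}$-derivative comes out as $(\overline{\alpha})_{x_{1}x_{1}}=\overline{\rho^{3}(\alpha_{yy}+2q\alpha_{yz}+q^{2}\alpha_{zz})}$ while items (3)--(5) reproduce the printed formulas exactly, so the exponent $2$ in item (2) of the statement is a misprint for $3$ --- harmless for the application in Lemma~\ref{lem:criterion-condition1}, since, as you observe, only the nonvanishing of $\overline{\rho}$ is used there.
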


\begin{proof}
	We have only to show (1), since one can show other cases, similarly. We remark that $\overline{\alpha}(x_{1},x_{2},x_{3},x_{4}):=\alpha(x,\varphi_{1},\psi_{1},x_{3},x_{4})$. By (\ref{dphi1}), (\ref{dpsi1}) and the chain rule, we have
	\begin{flalign*}
		(\overline{\alpha})_{x_{1}}&=\overline{\alpha_{y}}\varphi_{x_{1}} + \overline{\alpha_{z}}\psi_{x_{1}}
		=\overline{\rho(\alpha_{y}+q\alpha_{z})},\\
		(\overline{\alpha})_{x_{2}} &= \overline{\alpha_{y}}\varphi_{x_{2}} + \overline{\alpha_{z}}\psi_{x_{2}}=\overline{\alpha_{z}(1-\rho x (\alpha_{y}+q\alpha_{z}))}=\overline{\rho \alpha_{z}},\\
		(\overline{\alpha})_{x_{3}}&=\overline{\alpha_{y}}\varphi_{x_{3}} + \overline{\alpha_{z}}\psi_{x_{3}} + \overline{\alpha_{p}}=\overline{(\alpha_{p}+x\alpha_{z})(1-\rho x (\alpha_{y}+q\alpha_{z}))}=\overline{\rho(\alpha_{p}+x\alpha_{z})}.
	\end{flalign*}
	Therefore, one has
	\begin{flalign*}
		(\overline{\alpha})_{x_{1}}(\overline{\alpha})_{x_{3}}-(\overline{\alpha})_{x_{2}}=\overline{\rho^{2}(\alpha_{y}+q\alpha_{z})(\alpha_{p}+x\alpha_{z})}-\overline{\rho\alpha_{z}}=\overline{\rho^{2}(\alpha_{p}(\alpha_{y}+q \alpha_{z})-\alpha_{z})},
	\end{flalign*}
	which completes the proof of (1).
\end{proof}

\begin{lem}\label{lem:criterion-condition1}
	For a point $p_{0} \in J^{1}(2,1)$, the function $\overline{\alpha}$ satisfies 
		\begin{flalign}\label{alpha-bar-condition1}
			\left\{
				\begin{array}{l}
					x_{1}(\overline{\alpha})_{x_{1}x_{2}}-\overline{\alpha}(\overline{\alpha})_{x_{1}x_{3}}-(\overline{\alpha})_{x_{1}x_{4}}+(\overline{\alpha})_{x_{1}}(\overline{\alpha})_{x_{3}}-(\overline{\alpha})_{x_{2}}=0\\
					(\overline{\alpha})_{x_{1}x_{1}}=0,
				\end{array}
			\right.\hspace{4mm} \text{at $p_{0}$},
		\end{flalign}
		if and only if, the function $\alpha$ satisfies
		\begin{flalign}\label{alpha-condition1}
			\left\{
				\begin{array}{l}
					\alpha_{p}(\alpha_{y}+q\alpha_{z})-\alpha(\alpha_{y}+q\alpha_{z})_{p}-(\alpha_{y}+q\alpha_{z})_{q}-\alpha_{z}=0 \\
					\alpha_{yy}+2q\alpha_{yz} +q^{2}\alpha_{zz}=0,
				\end{array}
			\right.\hspace{4mm}\text{at $p_{0}$}.
		\end{flalign}
\end{lem}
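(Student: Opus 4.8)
The plan is to reduce both conditions to the single nonvanishing factor $\rho:=1/(1+x(\alpha_y+q\alpha_z))$ by substituting the explicit formulas of Lemma~\ref{lem:relation-alpha1} into the left-hand sides of (\ref{alpha-bar-condition1}). The two ingredients that make this substitution clean are the identity $x_1=\overline{y+\alpha x}$ (which holds because $\Phi_1$ sends $(x,y,z,p,q)$ to $(x,\,y+\alpha x,\dots)$) and the multiplicativity of $f\mapsto\overline{f}$, so that every product of a coordinate or function with a derivative of $\overline{\alpha}$ can be pulled inside a single bar. Since $\alpha$ is of generic type, $\rho$ is finite and nonzero, hence $\overline{\rho^{2}}$ never vanishes; and because $\Phi_1$ is a local diffeomorphism, $\overline{g}$ vanishes at $\Phi_1(p_0)$ if and only if $g$ vanishes at $p_0$. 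These two remarks convert every relation of the form $\overline{\rho^{2}(\cdots)}=0$ into $(\cdots)=0$ at $p_0$.

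First I would dispose of the second equation, which is immediate: Lemma~\ref{lem:relation-alpha1}(2) gives $(\overline{\alpha})_{x_1x_1}=\overline{\rho^{2}(\alpha_{yy}+2q\alpha_{yz}+q^{2}\alpha_{zz})}$, so $(\overline{\alpha})_{x_1x_1}=0$ at $\pi(p_0)$ is equivalent to $\alpha_{yy}+2q\alpha_{yz}+q^{2}\alpha_{zz}=0$ at $p_0$, i.e.\ the second equation of (\ref{alpha-condition1}). Next, writing $\beta:=\alpha_y+q\alpha_z$ and $S:=\alpha_{yy}+2q\alpha_{yz}+q^{2}\alpha_{zz}$, I would substitute parts (1), (3), (4), (5) of Lemma~\ref{lem:relation-alpha1} (the combination $(\overline{\alpha})_{x_1}(\overline{\alpha})_{x_3}-(\overline{\alpha})_{x_2}$ is supplied directly by part (1)) into the first left-hand side $E$ of (\ref{alpha-bar-condition1}). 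Collecting terms, all contributions proportional to $\beta_z=\alpha_{yz}+q\alpha_{zz}$ cancel identically, and the remainder organizes as
\[
	E=\overline{\rho^{2}(\alpha_p\beta-\alpha\beta_p-\beta_q-\alpha_z)+\rho^{3}xS(\alpha\alpha_p+\alpha_q)}.
\]
Here $\alpha_p\beta-\alpha\beta_p-\beta_q-\alpha_z$ is precisely the left-hand side of the first equation of (\ref{alpha-condition1}), and the only extra term carries a factor of $S$.

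The equivalence then follows by treating each system as a conjunction. Both (\ref{alpha-bar-condition1}) and (\ref{alpha-condition1}) contain the equation $S=0$ (matched in the previous step), so when verifying the first equations I may assume $S=0$; under this assumption the cross term $\rho^{3}xS(\alpha\alpha_p+\alpha_q)$ drops out, leaving $E=\overline{\rho^{2}(\alpha_p\beta-\alpha\beta_p-\beta_q-\alpha_z)}$, whose vanishing at $\pi(p_0)$ is equivalent, via $\rho\ne0$ and the local-diffeomorphism remark, to the first equation of (\ref{alpha-condition1}) at $p_0$. Hence the two systems are equivalent pointwise. The main obstacle is purely computational: carrying out the substitution of the five $\rho$-rational formulas of Lemma~\ref{lem:relation-alpha1} without error, and recognizing the two structural cancellations — the complete disappearance of the $\beta_z$-terms and the fact that the leftover term is a multiple of $S$, so that it is annihilated by the second condition rather than forcing the spurious relation $\alpha\alpha_p+\alpha_q=0$.
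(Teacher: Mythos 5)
Your proposal is correct and takes essentially the same route as the paper: both substitute the formulas of Lemma~\ref{lem:relation-alpha1} into (\ref{alpha-bar-condition1}), use $x_{1}=\overline{y+\alpha x}$ and the multiplicativity of $f\mapsto\overline{f}$, observe the cancellation of the $\alpha_{yz}+q\alpha_{zz}$ terms, and divide out the nonvanishing $\rho^{2}$. Your unconditional identity $E=\overline{\rho^{2}(\alpha_{p}\beta-\alpha\beta_{p}-\beta_{q}-\alpha_{z})+\rho^{3}xS(\alpha\alpha_{p}+\alpha_{q})}$ checks out (the $S$-coefficient is indeed $-\alpha_{z}(y+\alpha x)+\alpha(\alpha_{p}+x\alpha_{z})+\alpha_{q}+y\alpha_{z}=\alpha\alpha_{p}+\alpha_{q}$) and is a minor refinement that makes explicit the converse direction, which the paper only asserts ``can be proved by direct calculations.''
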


\begin{proof}
	We assume that $\overline{\alpha}$ satisfies (\ref{alpha-bar-condition1}). Then, by Lemma~\ref{lem:relation-alpha1} (1), we have
	\[
		(\overline{\alpha})_{x_{1}x_{1}}=\overline{\rho^{2}(\alpha_{yy}+2q\alpha_{yz}+q^{2}\alpha_{zz})}.
	\]
	Since $\overline{\alpha}$ satisfies $(\overline{\alpha})_{x_{1}x_{1}}=0$ and $\rho$ does not take the value 0, we obtain
	\begin{flalign}\label{condition-alpha-yy}
		\alpha_{yy}+2q\alpha_{yz}+q^{2}\alpha_{zz}=0.
	\end{flalign}
	Then, by Lemma~\ref{lem:relation-alpha1} and (\ref{condition-alpha-yy}), we have
	\begin{flalign*}
		0&=x_{1}(\overline{\alpha})_{x_{1}x_{2}}-\overline{\alpha}(\overline{\alpha})_{x_{1}x_{3}}-(\overline{\alpha})_{x_{1}x_{4}}+(\overline{\alpha})_{x_{1}}(\overline{\alpha})_{x_{3}}-(\overline{\alpha})_{x_{2}}\\
		&=\overline{\rho^{2} (y+\alpha x)(\alpha_{yz}+q\alpha_{zz})}-\overline{\rho^{2}\alpha(x(\alpha_{yz}+q \alpha_{zz})+\alpha_{yp}+q \alpha_{zp})}\\
		&\hspace{10mm}-\overline{\rho^{2}(y(\alpha_{yz}+q\alpha_{zz})+\alpha_{yq}+q\alpha_{zq}+\alpha_{z})}+\overline{\rho^{2}(\alpha_{p}(\alpha_{y}+q \alpha_{z})-\alpha_{z})} \\
		&=\overline{\rho^{2}(\alpha_{p}(\alpha_{y}+q\alpha_{z})-\alpha(\alpha_{yp}+q\alpha_{zp})-(\alpha_{yq}+q\alpha_{zq})-2\alpha_{z})}\\
		&=\overline{\rho^{2}(\alpha_{p}(\alpha_{y}+q\alpha_{z})-\alpha(\alpha_{y}+q\alpha_{z})_{p}-(\alpha_{y}+q\alpha_{z})_{q}-\alpha_{z})}.
	\end{flalign*}
	Therefore, since again $\rho$ does not take the value 0, the function $\alpha$ satisfies 
	\[
		\alpha_{p}(\alpha_{y}+q\alpha_{z})-\alpha(\alpha_{y}+q\alpha_{z})_{p}-(\alpha_{y}+q\alpha_{z})_{q}-\alpha_{z}=0.
	\]
	On the other hand, the converse can be proved by direct calculations. This completes the proof. 
\end{proof}

\begin{proof}[Proof of Theorem~\ref{thm:criterion1}]
	This theorem is proved by Lemma~\ref{lem:criterion-distribution1} and Lemma~\ref{lem:criterion-condition1}, immediately.
\end{proof}

\begin{ex}\label{ex:structure-1}
	We consider $\alpha(x,y,z,p,q):=f(z-px-qy,p,q)$, where $f$ is any function with $u$, $v$ and $w$ as variables. By Example~\ref{ex:involutive-function}, $\alpha$ is of involutive type. Moreover, we have
	\[
		1+x(\alpha_{y}+q\alpha_{z})=1+x(-qf_{u}+qf_{u})=1 \ne 0.
	\]
	Therefore, $\alpha$ is of generic type. Furthermore, by Proposition~\ref{prop:form-I/ChI-1}, we obtain
	\[
		\tilde{\calI}=\{dx_{2}+x_{1}dx_{4},dx_{3}-\overline{f}dx_{4}\}_{\mathrm{diff}}.
	\]
	Furthermore, one has
	\begin{flalign*}
		&\hspace{5mm}\alpha_{p}(\alpha_{y}+q\alpha_{z})-\alpha(\alpha_{y}+q\alpha_{z})_{p}-(\alpha_{y}+q\alpha_{z})_{q}-\alpha_{z}\\
		&=f_{p}(-qf_{u}+qf_{u})-f(-qf_{u}+qf_{u})_{p}-(-qf_{u}+qf{u})_{q}-f_{u}\\
		&=-f_{u},
	\end{flalign*}
	and thus
	\begin{flalign*}
		\alpha_{yy}+2q\alpha_{yz} +q^{2}\alpha_{zz} = q^{2}f_{uu}-2q^{2} f_{uu}+q^{2}f_{uu}=0.
	\end{flalign*}
	Hence, by Theorem~\ref{thm:criterion1}, if the partial derivative $f$ by $u$ is zero, then $\tilde{\calI}$ is of type $(2,3)$. Conversely, if the partial derivative $f$ by $u$ is not zero, then $\tilde{\calI}$ is of type $(2,3,4)$.
\end{ex}

\subsection{Case of non-generic type}
In this subsection, for the GMAS $\calI$ which is of non-generic type, we study the derived systems of $\tilde{\calI}$. Hereafter, we put
\[
	\beta:=y+\alpha x.
\]
The following theorem is the main result in this subsection.
\begin{thm}\label{thm:criterion2}
	For the GMAS $\calI$ and $p_{0}\in J^{1}(2,1)$, the following holds:
	\begin{enumerate}[\normalfont (1)]
		\item $\tilde{\calI}$ is of type $(2,3)$ at $\pi(p_{0})$ if and only if $\alpha$ satisfies
		\begin{flalign}\label{discriminant2}
			\alpha_{p} + x \alpha_{z}=0, \hspace{4mm} \text{at $p_{0}$}.
		\end{flalign}
		\item $\tilde{\calI}$ is of type $(2,3,4)$ at $\pi(p_{0})$ if and only if $\alpha$ does not satisfy (\ref{discriminant2}) at $p_{0}$.
	\end{enumerate}
		\end{thm}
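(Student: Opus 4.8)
The plan is to run the non-generic analogue of the argument behind Theorem~\ref{thm:criterion1}, working directly from the explicit generators of $\tilde{\calI}$ supplied by Proposition~\ref{prop:form-I/ChI-2}. Write $\xi_1 := dx_2 + (\overline{y+\alpha x})\,dx_4$ and $\xi_2 := dx_3 - x_1\,dx_4$, so that $\tilde{\calI} = \{\xi_1,\xi_2\}_{\mathrm{diff}}$ with corresponding subbundle $\tilde I = \Span\{\xi_1,\xi_2\}$ of corank $2$. Setting $\beta = y+\alpha x$ and $\overline{\beta} = \overline{y+\alpha x}$, I would take $\{\xi_1,\xi_2,dx_1,dx_4\}$ as a local coframe of $T^{\ast}U_2$ and compute the derived flag $\tilde I \supset \tilde I^{(1)} \supset \tilde I^{(2)}$ via the bundle map $\delta'$ of Section~5.

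The first step is the first derived system. One has $d\xi_1 = d\overline{\beta}\wedge dx_4$ and $d\xi_2 = -\,dx_1\wedge dx_4 \not\equiv 0 \bmod \xi_1,\xi_2$. The key input is that $\overline{\beta}$ does not depend on $x_1$: just as in the proof of Lemma~\ref{lem:beta-function}, writing $\overline{\beta} = \varphi_2 + x_0 x_1$ and reading $(\varphi_2)_{x_1} = -\overline{x} = -x_0$ off (\ref{dphi2}) yields $(\overline{\beta})_{x_1} = 0$. Consequently $d\xi_1 \equiv 0 \bmod \xi_1,\xi_2$, and therefore $\tilde I^{(1)} = \Span\{\xi_1\}$, which has corank $3$. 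Thus $\tilde{\calI}$ is of type $(2,3,\dots)$ in every case, and the whole theorem reduces to deciding when the rank-one system $\Span\{\xi_1\}$ stabilizes.

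Since $\tilde I^{(1)}$ has rank one, $\tilde I^{(2)} = \tilde I^{(1)}$ precisely when $\xi_1$ is Frobenius-integrable, i.e.\ $\xi_1\wedge d\xi_1 = 0$, and $\tilde I^{(2)} = \{0\}$ otherwise. Using $(\overline{\beta})_{x_1} = 0$ once more, I expect $\xi_1\wedge d\xi_1 = (\overline{\beta})_{x_3}\,dx_2\wedge dx_3\wedge dx_4$, so the dichotomy is governed by the single scalar $(\overline{\beta})_{x_3}$: type $(2,3)$ iff $(\overline{\beta})_{x_3}=0$, and type $(2,3,4)$ iff $(\overline{\beta})_{x_3}\ne 0$. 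It then remains only to translate this into a condition on $\alpha$: reading the $dx_3$-coefficient of $d\varphi_2$ in (\ref{dphi2}) gives $(\overline{\beta})_{x_3} = (\varphi_2)_{x_3} = \overline{x(\alpha_p+x\alpha_z)}$, and since $\det J_{\Phi_2} = -1/x \ne 0$ forces $x\ne 0$ on $U_2$, this vanishes exactly when $\alpha_p + x\alpha_z = 0$. This establishes (1), and (2) follows by negation.

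The conceptual point is that the non-generic case is in fact \emph{simpler} than the generic one: because $\overline{\beta}$ is independent of $x_1$, the first derived system is already rank one, so the last jump is controlled by a single Frobenius condition rather than the pair of conditions in (\ref{discriminant1-1}). The main obstacle is therefore purely computational and concentrated in two places — verifying carefully that $(\overline{\beta})_{x_1}=0$ (which is what collapses the first derived system to $\Span\{\xi_1\}$ and removes the would-be $x_1$-correction term from $\xi_1\wedge d\xi_1$), and computing $(\overline{\beta})_{x_3}$ in terms of $\alpha$. Both rely on the explicit inverse Jacobian (\ref{Jacobian2}) together with the defining relations of non-generic involutive type, namely $\alpha_x + p\alpha_z - \alpha(\alpha_y+q\alpha_z)=0$ and $1+x(\alpha_y+q\alpha_z)=0$; these play the role that Lemmas~\ref{lem:relation-alpha1} and~\ref{lem:criterion-condition1} played in the generic argument.
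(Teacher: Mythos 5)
Your proposal is correct, and it follows the paper's overall strategy---compute the derived flag of the reduced Pfaffian system from the generators of Proposition~\ref{prop:form-I/ChI-2} and translate back to $\alpha$ via the inverse Jacobian (\ref{Jacobian2})---but it reorganizes the argument in a genuinely leaner way. The paper first proves a criterion valid for an arbitrary coefficient $\overline{\beta}$ (Lemma~\ref{lem:criterion-distribution2}), mirroring the generic case: it changes basis to $\eta_1 = \xi_1 + (\overline{\beta})_{x_1}\xi_2$, derives the two-condition system (\ref{discriminant2-1}) involving second derivatives of $\overline{\beta}$, and only afterwards (Lemmas~\ref{lem:relation-beta} and~\ref{lem:criterion-condition2}) uses $(\overline{\beta})_{x_1}=0$ and $x \ne 0$ to collapse that system to $\alpha_p + x\alpha_z = 0$. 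You instead feed $(\overline{\beta})_{x_1}=0$ in at the start, so the basis change is vacuous ($\eta_1=\xi_1$), the first derived system is $\Span\{\xi_1\}$ outright, and stabilization becomes the single Frobenius condition $\xi_1 \wedge d\xi_1 = (\overline{\beta})_{x_3}\, dx_2\wedge dx_3\wedge dx_4 = 0$, with no second derivatives of $\overline{\beta}$ ever appearing; your identifications $(\overline{\beta})_{x_1}=0$ and $(\overline{\beta})_{x_3}=\overline{x(\alpha_p+x\alpha_z)}$ are exactly Lemma~\ref{lem:relation-beta}, read off (\ref{dphi2}). What the paper's route buys is structural parallelism with the generic Theorem~\ref{thm:criterion1} (a two-condition criterion in the barred variables, then a translation lemma); what yours buys is brevity and transparency---the non-generic case is visibly governed by one scalar---and it incidentally sidesteps the slip in the paper's proof of Lemma~\ref{lem:criterion-distribution2}, where $(\tilde{I})^{(1)}$ is written as $\Span\{\eta_2\}$ even though $d\eta_2 \not\equiv 0$ forces $(\tilde{I})^{(1)}=\Span\{\eta_1\}$ (as the remainder of that proof in fact uses). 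Two cosmetic quibbles: the vanishing $(\overline{\beta})_{x_1}=0$ is the computation of Lemma~\ref{lem:relation-beta}, not of Lemma~\ref{lem:beta-function} (which shows independence of the fiber coordinate $x_0$); and $x\ne 0$ follows from the non-generic relation $1+x(\alpha_y+q\alpha_z)=0$ itself, rather than from the formula $\det J_{\Phi_2}=-1/x$, which already presupposes it.
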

		
In order to show Theorem~\ref{thm:criterion2}, we prepare some lemmas.

\begin{lem}\label{lem:criterion-distribution2}
	For the GMAS $\calI$ and $p_{0}\in J^{1}(2,1)$, the following holds:
	\begin{enumerate}[\normalfont (1)]
		\item $\tilde{\calI}$ is of type $(2,3)$ at $\pi(p_{0})$ if and only if $\overline{\beta}$ satisfies
		\begin{flalign}\label{discriminant2-1}
			\left\{
				\begin{array}{l}
					\overline{\beta} (\overline{\beta})_{x_{1}x_{2}}-x_{1}(\overline{\beta})_{x_{1}x_{3}}-(\overline{\beta})_{x_{1}x_{4}}-(\overline{\beta})_{x_{1}}(\overline{\beta})_{x_{2}}+(\overline{\beta})_{x_{3}}=0\\
					(\overline{\beta})_{x_{1}x_{1}}=0,
				\end{array}
			\right.\hspace{4mm} \text{at $p_{0}$}.
		\end{flalign}
		\item $\tilde{\calI}$ is of type $(2,3,4)$ at $\pi(p_{0})$ if and only if $\overline{\beta}$ does not satisfy (\ref{discriminant2-1}) at $p_{0}$.
	\end{enumerate}
		\end{lem}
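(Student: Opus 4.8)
The plan is to mirror the proof of Lemma~\ref{lem:criterion-distribution1}, computing the derived flag of $\tilde{I}$ directly, but with the roles of the two generators interchanged: in the non-generic case the coefficient depending on $x_1$ sits in the first generator rather than the second. First I would invoke Proposition~\ref{prop:form-I/ChI-2} to write the corresponding subbundle as $\tilde{I}=\Span\{\xi_1,\xi_2\}$ with
\[
	\xi_1:=dx_2+\overline{\beta}\,dx_4,\qquad \xi_2:=dx_3-x_1\,dx_4,
\]
and choose $\{\xi_1,\xi_2,dx_1,dx_4\}$ as a local coframe of $T^{\ast}U_2$. Here $\overline{\beta}$ is a genuine function on $U_2$ by Lemma~\ref{lem:beta-function}, which is what makes the coefficients below well defined.

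Next I would compute the exterior derivatives modulo $\tilde{I}$. Using $dx_2\equiv-\overline{\beta}\,dx_4$ and $dx_3\equiv x_1\,dx_4 \mod \xi_1,\xi_2$, one finds
\[
	d\xi_1\equiv(\overline{\beta})_{x_1}\,dx_1\wedge dx_4,\qquad d\xi_2=-dx_1\wedge dx_4 \mod \xi_1,\xi_2.
\]
Since $d\xi_2\not\equiv 0$, the first derived system is one-dimensional, and solving $a\,d\xi_1+b\,d\xi_2\equiv 0$ at a point gives $b=a(\overline{\beta})_{x_1}$, hence the generator
\[
	\eta_2:=\xi_1+(\overline{\beta})_{x_1}\xi_2=dx_2+(\overline{\beta})_{x_1}\,dx_3+(\overline{\beta}-x_1(\overline{\beta})_{x_1})\,dx_4,
\]
so that $(\tilde{I})^{(1)}=\Span\{\eta_2\}$ and $\corank(\tilde{I})^{(1)}=3$. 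This establishes the first two entries of the derived type, independently of any condition on $\alpha$.

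The decisive step is to compute $d\eta_2$ modulo $\eta_2$ to decide whether $(\tilde{I})^{(2)}$ equals $(\tilde{I})^{(1)}$ or $\{0\}$. Expanding $d\eta_2$ in the basis $\{dx_1\wedge dx_3,\ dx_1\wedge dx_4,\ dx_3\wedge dx_4\}$ and substituting $dx_2\equiv-(\overline{\beta})_{x_1}\,dx_3-(\overline{\beta}-x_1(\overline{\beta})_{x_1})\,dx_4 \mod \eta_2$, I expect the coefficients of $dx_1\wedge dx_3$ and $dx_1\wedge dx_4$ to be $(\overline{\beta})_{x_1 x_1}$ and $-x_1(\overline{\beta})_{x_1 x_1}$ respectively, both vanishing exactly when $(\overline{\beta})_{x_1 x_1}=0$, while the coefficient of $dx_3\wedge dx_4$ collapses, after the two $x_1(\overline{\beta})_{x_1}(\overline{\beta})_{x_1 x_2}$ terms cancel, to
\[
	\overline{\beta}(\overline{\beta})_{x_1 x_2}-x_1(\overline{\beta})_{x_1 x_3}-(\overline{\beta})_{x_1 x_4}-(\overline{\beta})_{x_1}(\overline{\beta})_{x_2}+(\overline{\beta})_{x_3},
\]
which is precisely the left-hand side of the first equation in (\ref{discriminant2-1}). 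Consequently $d\eta_2\equiv 0 \mod \eta_2$ if and only if (\ref{discriminant2-1}) holds: in that case $(\tilde{I})^{(2)}=(\tilde{I})^{(1)}$ and $\tilde{\calI}$ is of type $(2,3)$, and otherwise $(\tilde{I})^{(2)}=\{0\}$ and $\tilde{\calI}$ is of type $(2,3,4)$. The only point requiring care is exactly this last cancellation, the analogue of the corresponding simplification in Lemma~\ref{lem:criterion-distribution1}; once the coframe is fixed and the reduction modulo $\eta_2$ is carried out, the rest is routine bookkeeping.
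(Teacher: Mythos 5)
Your proposal is correct and takes essentially the same route as the paper's proof: the same generators $\xi_{1}=dx_{2}+\overline{\beta}dx_{4}$, $\xi_{2}=dx_{3}-x_{1}dx_{4}$, the same first derived generator $\xi_{1}+(\overline{\beta})_{x_{1}}\xi_{2}$ (called $\eta_{1}$ in the paper), and the same computation of its exterior derivative modulo the derived system, including the cancellation of the two $x_{1}(\overline{\beta})_{x_{1}}(\overline{\beta})_{x_{1}x_{2}}$ terms --- your predicted coefficients agree with the paper's, which merely expands in the basis $\{\eta_{2}\wedge dx_{4},\, dx_{1}\wedge \eta_{2}\}$ instead of coordinate $2$-forms. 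The only differences are cosmetic: you derive the generator of $(\tilde{I})^{(1)}$ by solving $a\,d\xi_{1}+b\,d\xi_{2}\equiv 0$ rather than exhibiting it directly, and your labelling even sidesteps a harmless typo in the paper, which writes $(\tilde{I})^{(1)}=\Span\{\eta_{2}\}$ where $\Span\{\eta_{1}\}$ is meant.
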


\begin{proof}
	By Proposition~\ref{prop:form-I/ChI-2}, we have
	\[
		\tilde{\calI}=\{dx_{2}+\overline{\beta}dx_{4},dx_{3}-x_{1}dx_{4}\}_{\mathrm{diff}},
	\]
	and thus the corresponding subbundle $\tilde{I}$ of $\calI$ is obtained by 
	\[
		\tilde{I}=\Span\{dx_{2}+\overline{\beta}dx_{4},dx_{3}-x_{1}dx_{4}\}.
	\]
	Then we put 
	\[
		\xi_{1}:=dx_{2}+\overline{\beta}dx_{4},\hspace{3mm}\xi_{2}:=dx_{3}-x_{1}dx_{4},
	\]
	and take a local coframe $\{\xi_{1},\xi_{2},dx_{1},dx_{4}\}$ of $T^{\ast}U_{2}$. Then we obtain
\begin{flalign*}
		d\xi_{1}&=d\overline{\beta} \wedge dx_{4}=((\overline{\beta})_{x_{1}}dx_{1}+(\overline{\beta})_{x_{2}}dx_{2}+(\overline{\beta})_{x_{3}}dx_{3}) \wedge dx_{4} \\
		&\equiv (\overline{\beta})_{x_{1}} dx_{1} \wedge dx_{4}\mod \xi_{1},\xi_{2},\\
		d\xi_{2} &=-dx_{1} \wedge dx_{4}.
	\end{flalign*}
	Therefore we put
	\begin{flalign*}
		\eta_{1}&:=\xi_{1}+(\overline{\beta})_{x_{1}}\xi_{2}=dx_{2}+(\overline{\beta})_{x_{1}}dx_{3} + (\overline{\beta}-x_{1}(\overline{\beta})_{x_{1}})dx_{4},\\
		\eta_{2}&:=\xi_{2}.
	\end{flalign*}
	and change the defining 1-forms $\xi_{1}$ and $\xi_{2}$ of $\tilde{I}$ to $\eta_{1}$ and $\eta_{2}$. Then we have
	\begin{flalign*}
		d\eta_{1} &\equiv d\xi_{1} + (\overline{\beta})_{x_{1}} d\xi_{2} \equiv 0 \mod \eta_{1},\eta_{2}, \\
		d\eta_{2} &=-dx_{1} \wedge dx_{4} \not \equiv 0 \mod \eta_{1},\eta_{2},
	\end{flalign*}
	and thus, one has $(\tilde{I})^{(1)}=\Span\{\eta_{2}\}$. Moreover, we have
	\begin{flalign*}
		d\eta_{1} &= d(\overline{\beta})_{x_{1}} \wedge dx_{3} + d(\overline{\beta}-x_{1}(\overline{\beta})_{x_{1}})\wedge dx_{4} \\
		&= ((\overline{\beta})_{x_{1}x_{1}}dx_{1}+(\overline{\beta})_{x_{1}x_{2}}dx_{2}+(\overline{\beta})_{x_{1}x_{4}}dx_{4}) \wedge dx_{3} \\
		&\hspace{20mm}+(-x_{1}(\overline{\beta})_{x_{1}x_{1}}dx_{1} + ((\overline{\beta})_{x_{2}} -x_{1}(\overline{\beta})_{x_{1}x_{2}})dx_{2}\\
		&\hspace{50mm}+((\overline{\beta})_{x_{3}} - x_{1}(\overline{\beta})_{x_{1}x_{3}})dx_{3}) \wedge dx_{4}\\
		&\equiv ((\overline{\beta})_{x_{1}x_{1}}dx_{1}+(\overline{\beta})_{x_{1}x_{2}} (x_{1}(\overline{\beta})_{x_{1}} -\overline{\beta})dx_{2}+(\overline{\beta})_{x_{1}x_{4}}dx_{4}) \wedge dx_{3} \\
		&\hspace{20mm}+(-x_{1}(\overline{\beta})_{x_{1}x_{1}}dx_{1} -(\overline{\beta})_{x_{1}} ((\overline{\beta})_{x_{2}} -x_{1}(\overline{\beta})_{x_{1}x_{2}})dx_{3} \\ 
		&\hspace{50mm}+((\overline{\beta})_{x_{3}} - x_{1}(\overline{\beta})_{x_{1}x_{3}})dx_{3}) \wedge dx_{4} \mod \eta_{1}\\
		&\equiv(\overline{\beta} (\overline{\beta})_{x_{1}x_{2}}-x_{1}(\overline{\beta})_{x_{1}x_{3}}-(\overline{\beta})_{x_{1}x_{4}}-(\overline{\beta})_{x_{1}}(\overline{\beta})_{x_{2}}+(\overline{\beta})_{x_{3}})\eta_{2} \wedge dx_{4}\\
		&\hspace{71.5mm}+ (\overline{\beta})_{x_{1}x_{1}}dx_{1} \wedge \eta_{2} \mod \eta_{1}.
	\end{flalign*}
	Therefore, if $\overline{\beta}$ satisfies (\ref{discriminant1-2}), then we have
	\[
		d\eta_{1} \equiv 0 \mod \eta_{1},
	\]
	and thus $(\tilde{I})^{(2)}=(\tilde{I})^{(1)}$, otherwise $(\tilde{I})^{(2)}=\{0\}$. This completes the proof.
\end{proof} 

In order to express the condition (\ref{discriminant2-1}) by using functions defined on $J^{1}(2,1)$, we next calculate the partial derivatives of $\overline{\beta}$.

\begin{lem}\label{lem:relation-beta}
	For the function $\overline{\beta}$, the following holds:
	\[
		(\overline{\beta})_{x_{1}}=0,\hspace{2mm}
		(\overline{\beta})_{x_{2}}=\overline{x\alpha_{z}},\hspace{2mm} 
		(\overline{\beta})_{x_{3}}=\overline{x(\alpha_{p}+x\alpha_{z})}.
		\]
\end{lem}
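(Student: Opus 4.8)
The plan is to avoid differentiating $\beta$ from scratch and instead reuse the expression for $\overline{\beta}$ already obtained in the proof of Lemma~\ref{lem:beta-function}, where it was observed that $\overline{\beta}=\overline{y+\alpha x}=\varphi_{2}+x_{0}x_{1}$. Here $\varphi_{2}$ denotes the second component of $\Phi_{2}^{-1}$ (that is, $\overline{y}$), while $\overline{\alpha}=x_{1}$ and $\overline{x}=x_{0}$ are simply the coordinate functions, since $x$ and $\alpha$ are the zeroth and first coordinates of $\Phi_{2}$. Once this identity is in hand, each of the three derivatives reduces to reading off a single entry of the Jacobian $J_{\Phi_{2}^{-1}}$, i.e. a coefficient in (\ref{dphi2}).

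Concretely, I would differentiate $\overline{\beta}=\varphi_{2}+x_{0}x_{1}$ with respect to $x_{1}$, $x_{2}$ and $x_{3}$ in turn. From (\ref{dphi2}) one reads off $(\varphi_{2})_{x_{1}}=-\overline{x}$, $(\varphi_{2})_{x_{2}}=\overline{x\alpha_{z}}$ and $(\varphi_{2})_{x_{3}}=\overline{x(\alpha_{p}+x\alpha_{z})}$. The term $x_{0}x_{1}$ contributes nothing to the $x_{2}$- and $x_{3}$-derivatives, so $(\overline{\beta})_{x_{2}}=\overline{x\alpha_{z}}$ and $(\overline{\beta})_{x_{3}}=\overline{x(\alpha_{p}+x\alpha_{z})}$ follow immediately. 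For the $x_{1}$-derivative the product $x_{0}x_{1}$ contributes an extra $x_{0}$, giving $(\overline{\beta})_{x_{1}}=(\varphi_{2})_{x_{1}}+x_{0}=-\overline{x}+x_{0}$.

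The only point that requires a moment's care is the final cancellation: since $\overline{x}=x\circ\Phi_{2}^{-1}=x_{0}$, we obtain $(\overline{\beta})_{x_{1}}=-x_{0}+x_{0}=0$, which is the first claimed identity. This is consistent with Lemma~\ref{lem:beta-function}, which already guarantees $(\overline{\beta})_{x_{0}}=0$; the present lemma simply records the remaining first-order behaviour of $\overline{\beta}$ along the base directions. Since everything is a direct substitution into (\ref{dphi2}) together with the product rule, there is no genuine obstacle here: the computation is purely bookkeeping, and the only thing one must not lose track of is that $\varphi_{2}$ is the $y$-component of $\Phi_{2}^{-1}$ and that $\overline{x}$ collapses to the fibre coordinate $x_{0}$.
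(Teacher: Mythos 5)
Your proposal is correct and matches the paper's own proof essentially verbatim: the paper likewise writes $\overline{\beta}=\varphi_{2}+x_{0}x_{1}$ (as established in the proof of Lemma~\ref{lem:beta-function}), reads $(\varphi_{2})_{x_{1}}=-\overline{x}$, $(\varphi_{2})_{x_{2}}=\overline{x\alpha_{z}}$, $(\varphi_{2})_{x_{3}}=\overline{x(\alpha_{p}+x\alpha_{z})}$ off the Jacobian formula (\ref{dphi2}), and obtains $(\overline{\beta})_{x_{1}}=-x_{0}+x_{0}=0$ via the same cancellation $\overline{x}=x_{0}$. There is nothing missing; your bookkeeping of $\varphi_{2}$ as the $y$-component of $\Phi_{2}^{-1}$ is exactly the paper's convention.
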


\begin{proof}
	By (\ref{dphi2}), (\ref{dpsi2}) and the chain rule, we have
	\begin{flalign*}
		(\overline{\beta})_{x_{1}}&=(\varphi_{2})_{x_{1}}+x_{0}=-x_{0}+x_{0}=0,\\
		(\overline{\beta})_{x_{2}}&=(\varphi_{2})_{x_{2}}=\overline{x\alpha_{z}},\\
		(\overline{\beta})_{x_{3}}&=(\varphi_{2})_{x_{3}}=\overline{x(\alpha_{p}+x\alpha_{z})},
	\end{flalign*}
	which completes the proof.
\end{proof}

\begin{lem}\label{lem:criterion-condition2}
	For a point $p_{0} \in J^{1}(2,1)$, the function $\overline{\beta}$ satisfies
	\begin{flalign}\label{beta-bar-condition}
			\left\{
				\begin{array}{l}
					\overline{\beta} (\overline{\beta})_{x_{1}x_{2}}-x_{1}(\overline{\beta})_{x_{1}x_{3}}-(\overline{\beta})_{x_{1}x_{4}}-(\overline{\beta})_{x_{1}}(\overline{\beta})_{x_{2}}+(\overline{\beta})_{x_{3}}=0\\
					(\overline{\beta})_{x_{1}x_{1}}=0,
				\end{array}
			\right. \hspace{4mm} \text{at $p_{0}$}.
		\end{flalign}
		if and only if, the function $\alpha$ satisfies
		\begin{flalign}\label{alpha-condition1}
				\begin{array}{l}
					\alpha_{p} + x \alpha_{z}=0,\hspace{4mm}\text{at $p_{0}$}.
				\end{array}
		\end{flalign}
\end{lem}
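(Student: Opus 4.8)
The plan is to exploit Lemma~\ref{lem:relation-beta}, which records
\[
(\overline{\beta})_{x_1}=0,\qquad (\overline{\beta})_{x_2}=\overline{x\alpha_z},\qquad (\overline{\beta})_{x_3}=\overline{x(\alpha_p+x\alpha_z)}.
\]
The crucial point is that the first of these is a \emph{functional} identity on $V_2$ (it issues from the chain-rule computation $(\varphi_2)_{x_1}+x_0=-x_0+x_0=0$, not merely from a pointwise vanishing). Hence I may differentiate it freely: every second-order derivative obtained by differentiating $(\overline{\beta})_{x_1}$ once more vanishes identically, that is,
\[
(\overline{\beta})_{x_1x_1}=(\overline{\beta})_{x_1x_2}=(\overline{\beta})_{x_1x_3}=(\overline{\beta})_{x_1x_4}=0.
\]

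With these vanishings in hand, the system (\ref{beta-bar-condition}) collapses dramatically. The second equation $(\overline{\beta})_{x_1x_1}=0$ holds automatically, so it imposes nothing. In the first equation, the four terms $\overline{\beta}(\overline{\beta})_{x_1x_2}$, $-x_1(\overline{\beta})_{x_1x_3}$, $-(\overline{\beta})_{x_1x_4}$ and $-(\overline{\beta})_{x_1}(\overline{\beta})_{x_2}$ all vanish, leaving only
\[
(\overline{\beta})_{x_3}=0.
\]
Thus (\ref{beta-bar-condition}) is equivalent, at $p_0$, to the single scalar condition $(\overline{\beta})_{x_3}=0$.

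Finally I substitute $(\overline{\beta})_{x_3}=\overline{x(\alpha_p+x\alpha_z)}$ and strip off the nonvanishing factor $\overline{x}$. In the non-generic regime one has $x\ne 0$ at $p_0$, since $\det J_{\Phi_2}=-1/x$ must be nonzero for $\Phi_2$ to be a local diffeomorphism; hence $\overline{x}$ does not vanish near $\pi(p_0)$. Therefore $(\overline{\beta})_{x_3}=0$ at $\pi(p_0)$ is equivalent to $\overline{\alpha_p+x\alpha_z}=0$ there, and translating back through the bar correspondence (i.e.\ precomposing with $\Phi_2$) this reads exactly $\alpha_p+x\alpha_z=0$ at $p_0$. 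The converse direction runs backwards through the same chain of equivalences, each implication being reversible.

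I do not anticipate a genuine obstacle here; the statement is essentially a bookkeeping consequence of Lemma~\ref{lem:relation-beta}. The only point requiring care is the distinction between a pointwise and an identical vanishing of $(\overline{\beta})_{x_1}$: I must verify that it holds throughout a neighborhood (which it does, the computation delivering it as an identity) before differentiating it to annihilate the three mixed second derivatives and the quadratic term, rather than merely asserting those at $p_0$.
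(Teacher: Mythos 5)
Your proof is correct and follows essentially the same route as the paper: both rest on Lemma~\ref{lem:relation-beta}, collapse the system (\ref{beta-bar-condition}) to the single condition $(\overline{\beta})_{x_{3}}=\overline{x(\alpha_{p}+x\alpha_{z})}=0$, and use the non-generic relation $1+x(\alpha_{y}+q\alpha_{z})=0$ to force $x\ne 0$ and strip off the factor $\overline{x}$. Your explicit observation that $(\overline{\beta})_{x_{1}}=0$ holds as an identity on $V_{2}$, so that all four derivatives $(\overline{\beta})_{x_{1}x_{j}}$ vanish identically, merely makes precise a step the paper's displayed computation uses implicitly; it is the same argument, carefully articulated.
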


\begin{proof}
	First of all, we assume that $\overline{\beta}$ satisfies (\ref{beta-bar-condition}). By Lemma~\ref{lem:relation-beta}(1), we have $(\overline{\beta})_{x_{1}}=0$. Then, by the first formula of (\ref{beta-bar-condition}) and Lemma~\ref{lem:relation-beta} (3), we obtain
	\begin{flalign*}
		0=\overline{\beta} (\overline{\beta})_{x_{1}x_{2}}-x_{1}(\overline{\beta})_{x_{1}x_{3}}-(\overline{\beta})_{x_{1}x_{4}}-(\overline{\beta})_{x_{1}}(\overline{\beta})_{x_{2}}+(\overline{\beta})_{x_{3}}=(\overline{\beta})_{x_{3}}=\overline{x(\alpha_{p}+x\alpha_{z})}.
	\end{flalign*}
	In this subsection, we assume that $\alpha$ satisfies
	\[
		1+x(\alpha_{y}+x\alpha_{z})=0.
	\]
	Hence, it must be $x \ne 0$. Therefore, $\alpha$ satisfies
	\[
		\alpha_{p}+x\alpha_{z}=0.
	\]
	
	On the other hand, the converse can be proved by direct calculations. This completes the proof.
	\end{proof}

\begin{proof}[Proof of Theorem~\ref{thm:criterion2}]
	This theorem is proved by Lemma~\ref{lem:criterion-distribution2} and Lemma~\ref{lem:criterion-condition2}, immediately.
\end{proof}

\begin{ex}\label{ex:structure-2}
	\begin{enumerate}[\normalfont (1)]
		\item We consider the case of $\alpha:=(g(p,q)-y)/x$, where $g(p,q)$ is an arbitrary function. By Example~\ref{ex:involutive-function}, $\alpha$ is of involutive type. Moreover we obtain
	\[
		1+x(\alpha_{y}+q\alpha_{z})=1+x(-1/x)=0,
	\]
	Therefore, $\alpha$ is of non-generic type. Furthermore, one has
	\begin{flalign*}
		\alpha_{p}+x\alpha_{z} = g_{p}(p,q)/x.	
	\end{flalign*}
	Hence, by using Theorem~\ref{thm:criterion2}, if $g_{p}=0$, then $\tilde{\calI}$ is of type (2,3). Otherwise $\tilde{\calI}$ is of type (2,3,4).
	\item Let $C$ be a constant, and we consider the case of $\alpha:=-z/(qx)+C/x+p/q$. By Example~\ref{ex:involutive-function}, $\alpha$ is also of involutive type. Moreover we obtain
	\[
		1+x(\alpha_{y}+q\alpha_{z})=1+x(-q/(qx))=0,
	\]
	Therefore $\alpha$ is of non-generic type. Then we have
	\begin{flalign*}
		\alpha_{p}+x\alpha_{z}=1/q+x (-1/(qx))=0
	\end{flalign*}
	Hence, by using Theorem~\ref{thm:criterion2}, $\tilde{\calI}$ is of type (2,3).
	\end{enumerate}
\end{ex}

\section{Constructions of geometric singular solutions}
In this section, we construct geometric singular solutions of the GMAE (\ref{ODS1}). Moreover, we give criteria for the geometric singular solution to be right-left equivalent to the cuspidal edge, swallowtail and butterfly.

First of all, we define an integral manifold of an EDS.\begin{Def}
	Let $\mathcal{J}$ be an exterior differential system on a manifold $M$. A submanifold $\iota \colon N \hookrightarrow M$ is called an \textit{integral manifold} of $\mathcal{J}$, if $N$ satisfies $\iota^{\ast}\mathcal{J}=\{0\}$.
\end{Def}

We next define a geometric singular solution of the GMAE(\ref{ODS1}).
\begin{Def}\label{def:singular-sol}
	Let $\calI$ be the GMAS corresponding to the GMAE (\ref{ODS1}). A two-dimensional integral manifold $S$ of $\calI$ is called a \textit{geometric singular solution} of (\ref{ODS1}) if $\varpi|_{S} \colon S \to J^{0}(2,1)$ is not an immersion, where $\varpi \colon J^{1}(2,1) \to J^{0}(2,1)$ is the natural projection.  Hereafter, if there is no danger of confusion, we shall simply denote $\varpi|_{S}$ by $\varpi$.
\end{Def}

\begin{rem}
	Let $S$ be a two-dimensional integral manifold of $\calI$. If $(dx \wedge dy)|_{S} \ne 0$, then $S$ is called a classical solution of the GMAE (\ref{ODS1}).
\end{rem}

\begin{rem}
	We remark that there is an example of a geometric singular solution which the measure of the set of all singular points is not zero. The following is an example of such a geometric singular solution: let us consider $\alpha=0$ case, that is
	\begin{flalign}\label{GMAE-zero}
		\left\{
			\begin{array}{l}	
				z_{xx}=0 \\
				z_{xy}=0.
			\end{array}
		\right.
	\end{flalign}
	By direct calculations, the set
	\[
		S:=\{(s,0,1,0,t)\ |\ s\in I,\ t\in J\}
	\]
	is a two-dimensional integral manifold of the GMAS corresponding to (\ref{GMAE-zero}), where $I$ and $J$ are some open sets in $\R$. Moreover, the set of all singular points of $\varpi \colon S \to J^{0}(2,1)$ is $I \times J$. Therefore, $S$ is a geometric singular solution where the measure of the set of all singular points is not zero. In this section, we consider geometric singular solutions where the measure of the set of all singular points is zero. 
\end{rem}

In order to study the singularities of geometric singular solutions, we consider the singularities of the map $\varpi \colon S \to J^{0}(2,1)$. Namely, since $S$ is  two-dimensional and $J^{0}(2,1)$ is three-dimensional, we locally study the singularities of the map from $\R^{2}$ to $\R^{3}$. 

\subsection{Preliminaries from singularity theory}
In this subsection, we explain some well-known facts about singularity theory. Especially, we describe criteria for singularities of a map from $\R^{2}$ to $\R^{3}$. For the details of this subsection, see \cite{USY}.

\begin{Def}
	Let $M$ be an $m$-dimensional manifold, and $N$ be an $(m+1)$-dimensional manifold. A map $f \colon M \to N$ is called a \textit{wave front} if there exists Legendre immersion $L \colon M \to P(T^{\ast}N)$ such that $f = \pi \circ L$ holds. Here the map $\pi \colon P(T^{\ast}N) \to N$ is the natural projection. 
\end{Def}

\begin{Def}
	Let $E$ be a contact manifold, and $\varpi \colon E \to N$ be a fiber bundle. If any fibers are Legendrian submanifold in $E$, then $\varpi \colon E \to N$ is called a \textit{Legendrian fibration}.
\end{Def}

\begin{ex}
	The natural projection $\varpi \colon J^{1}(2,1) \to J^{0}(2,1)$ is a Legendrian fibration. 
\end{ex}

We next introduce a well-known theorem which describes the relationship between Legendre fibrations and wave fronts

\begin{thm}
	The projection of a Legendre submanifold by a Legendrian fibration is a wave front.
\end{thm}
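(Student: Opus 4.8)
The plan is to realise the given Legendre submanifold as the Legendre lift appearing in the definition of a wave front, by comparing the abstract Legendrian fibration $\varpi\colon E \to N$ with the tautological model $\pi\colon P(T^{\ast}N) \to N$. Write $D \subset TE$ for the contact distribution of $E$, and let $\iota\colon M \hookrightarrow E$ be the Legendre submanifold in question, so that $\dim M = m$ and $d\iota(TM) \subset D$. The object to be analysed is the projection $\varpi \circ \iota\colon M \to N$, and I must produce a Legendre immersion $L\colon M \to P(T^{\ast}N)$ with $\pi \circ L = \varpi \circ \iota$.

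First I would construct the \emph{contact-element map} $\Lambda\colon E \to P(T^{\ast}N)$ that assigns to each $e \in E$ the hyperplane $d\varpi_{e}(D_{e}) \subset T_{\varpi(e)}N$, equivalently the line in $T^{\ast}_{\varpi(e)}N$ annihilating it. To check that $\Lambda$ is well defined, observe that $\ker d\varpi_{e} = T_{e}(\varpi^{-1}(\varpi(e)))$ is the tangent space of a fibre; since the fibres are Legendrian, this kernel is $m$-dimensional and contained in $D_{e}$. As $D_{e}$ has dimension $2m$, the image $d\varpi_{e}(D_{e})$ has dimension $2m - m = m$, hence is a hyperplane in the $(m+1)$-dimensional space $T_{\varpi(e)}N$ and determines a point of $P(T^{\ast}N)$ lying over $\varpi(e)$. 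By construction $\pi \circ \Lambda = \varpi$.

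The key step is to verify that $\Lambda$ is a local contactomorphism onto the canonical contact structure of $P(T^{\ast}N)$, that is, that $\Lambda^{\ast}\omega_{0}$ is a nonzero multiple of a defining form of $D$. This is exactly the classical normal-form (uniqueness) statement for Legendrian fibrations: every Legendrian fibration is locally contact-isomorphic over its base to the projectivised cotangent bundle, and I would establish it by a Darboux-type argument, choosing fibre coordinates adapted to $\varpi$ together with an adapted contact form and matching them with the canonical form $\omega_{0}$ on $P(T^{\ast}N)$. This is the main obstacle, because it is precisely where the Legendrian property of the fibres of $\varpi$ must be upgraded into a genuine geometric equivalence with the model; the surrounding points are formal. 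Since the notion of wave front is local, working in a neighbourhood suffices.

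Granting this, I would set $L := \Lambda \circ \iota$. Because $\Lambda$ is a local contactomorphism and $\iota$ is a Legendre immersion, $L$ is again a Legendre immersion into $P(T^{\ast}N)$; moreover $\pi \circ L = \pi \circ \Lambda \circ \iota = \varpi \circ \iota$. Thus $\varpi \circ \iota$ factors as a Legendre immersion followed by the natural projection $\pi$, which is exactly the definition of a wave front, completing the proof.
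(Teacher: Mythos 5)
The paper does not actually prove this theorem: it is quoted as a well-known fact, with the reader referred to the singularity-theory reference cited at the start of the subsection, so there is no internal argument to compare yours against. Judged on its own merits, your proposal follows the standard proof and its structure is sound: the contact-element map $\Lambda(e)=d\varpi_{e}(D_{e})$ is well defined exactly because the fibres are Legendrian (so $\ker d\varpi_{e}\subset D_{e}$, and the image is a hyperplane in $T_{\varpi(e)}N$), it satisfies $\pi\circ\Lambda=\varpi$, and once $\Lambda$ is known to be a local contactomorphism, $L:=\Lambda\circ\iota$ is a Legendre immersion with $\pi\circ L=\varpi\circ\iota$, which is precisely the paper's definition of a wave front. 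Note also that $\Lambda$ is globally defined, so your closing remark about locality is needed only for the pointwise verification, not for producing $L$.

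The one substantive criticism is that you leave the key step --- that $\Lambda$ is a local contactomorphism --- as an appeal to a ``Darboux-type argument,'' which as written is a placeholder rather than a proof. In fact no normal-form machinery is needed, and the direct argument is shorter than the detour you propose. Let $\theta$ be a local contact form on $E$; since $\theta$ annihilates vertical vectors, there is a unique $\mu_{e}\in T^{\ast}_{\varpi(e)}N$ with $\mu_{e}\circ d\varpi_{e}=\theta_{e}$, and $\Lambda(e)=[\mu_{e}]$. Then $\Lambda^{\ast}\omega_{0}=\mu\circ d\pi\circ d\Lambda=\mu\circ d\varpi=\theta$ up to a nonvanishing factor, so contact compatibility of $\Lambda$ is automatic; the only thing requiring proof is that $\Lambda$ is an immersion (hence, since $\dim E=\dim P(T^{\ast}N)$, a local diffeomorphism). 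In fibred coordinates one has $\theta=\sum_{i}\mu_{i}\,dx_{i}$, so for $v$ vertical and $w\in D_{e}$ one computes $d\theta(v,w)=\bigl(v(\mu)\bigr)\bigl(d\varpi(w)\bigr)$. If $d\Lambda_{e}(v)=0$, then $v$ is vertical (because $\pi\circ\Lambda=\varpi$) and $v(\mu)$ is proportional to $\mu_{e}$, whence $d\theta(v,w)=c\,\theta(w)=0$ for all $w\in D_{e}$; nondegeneracy of $d\theta$ on $D_{e}$ then forces $v=0$. This is exactly the point where the Legendrian property of the fibres is ``upgraded,'' as you put it, and with this computation inserted at your flagged step the proof is complete.
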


\begin{Def}
	Two map germs $f\colon (\R^{2},a) \to (\R^{3},f(a))$ and $g\colon (\R^{2},b) \to (\R^{3},g(b))$ are \textit{right-left equivalent}, if there exist two diffeomorphism-germs $\varphi \colon (\R^{2},a) \to (\R^{2},b)$ and $\psi \colon (\R^{3},f(a)) \to (\R^{2},g(a))$ such that the following diagram commute:
	\begin{center}
		\begin{tikzpicture}[auto]
			\node (a) at (0, 0) {$(\R^{2},a)$}; 
			\node (b) at (0, -1.5) {$(\R^{2},b)$};
			\node (c) at (2, 0) {$(\R^{3},f(a))$};
			\node (d) at (2, -1.5) {$(\R^{3},g(b))$};
			\draw[->] (a) to node[swap] {$\scriptstyle \varphi$} (b);
			\draw[->] (c) to node {$\scriptstyle \psi$} (d);
			\draw[->] (a) to node {$\scriptstyle f$} (c);
			\draw[->] (b) to node[swap] {$\scriptstyle g$} (d);
		\end{tikzpicture}
		\end{center}
\end{Def}

From now on in this subsection, let $U$ be an open neighborhood of $p \in \R^{2}$, and $f \colon U \to \R^{3}$ be a wave front. We denote $S(f)$ by the set of all singular points of $f$. Then let $\tilde{\nu} \colon U \to S^{2}$ be a unit normal vector field of $f$. Now we define the map $\lambda \colon U \to \R$ by
\[
	\lambda(u,v):=\det (f_{u},f_{v},\tilde{\nu}),
\]
where $(u,v)$ is a local coordinate system of $U$. For the function $\lambda$, it is known that there exist a function $\hat{\lambda}$ and $\mu$ defined on $U$ such that $\lambda = \hat{\lambda}\mu$ and $\hat{\lambda}^{-1}(0) = S(f)$. The function $\hat{\lambda}$ is called the \textit{singularity identifier} of $f$.

\begin{Def}
	A point $p \in S(f)$ is called \textit{non-degenerate} if $d\hat{\lambda}_{p} \ne 0$, and \textit{degenerate} if $d\hat{\lambda}_{p} = 0$.
\end{Def}

We take a singular point $p \in U$ of $f$, and assume that $p$ is non-degenerate. Then, by the implicit function theorem, there exist an open neighborhood $U'$ of $p$ and a regular curve $\gamma \colon (-\varepsilon, \varepsilon) \to U'$ with $\gamma(0)=p$ such that $\hat{\lambda}(\gamma(t))=0$ holds for all $t \in U'$. This curve $\gamma$ is called a \textit{singular curve}. Moreover, we call $\hat{\gamma}:=f \circ \gamma$ by a \textit{singular locus}. Furthermore, since a non-degenerate singular point $p$ satisfies $\rank df_{p} =1$, there exists non-zero vector field $\eta$ on $U$ such that $df_{q}(\eta_{q}) =0$ for all $q \in S(f) \cap U$. This vector field $\eta$ is called a \textit{null vector field} along the singular curve $\gamma$.

\begin{Def}
	Let $f \colon (U,p) \to (\R^{3},f(p))$ be a map germ.
	\begin{enumerate}[(1)]
		\item If $f$ is right-left equivalent to the map germ $(u,v) \mapsto (u,v^{2},v^{3})$, then $f$ is called a \textit{cuspidal edge} at $p$.
		\item  If $f$ is right-left equivalent to the map germ $(u,v) \mapsto (u,3v^{4}+uv^{2},4v^{3}+2uv)$, then $f$ is called a \textit{swallowtail} at $p$.
		\item If $f$ is right-left equivalent to the map germ $(u,v) \mapsto (u,5v^{4}+2uv,uv^{2}+4v^{5})$, then $f$ is called a \textit{butterfly} at $p$.
		\item If $f$ is right-left equivalent to the map germ $(u,v) \mapsto (u,v(u^{2}-v^{2}),v^{2}(2u^{2}-3v^{2}))$, then $f$ is called a \textit{beaks} at $p$.
	\end{enumerate}
\end{Def}

For the map germ $f \colon (\R^{2},a) \to (\R^{3},f(a))$ which is right-left equivalent to the cuspidal edge, swallowtail or butterfly, the point $a \in \R^{2}$ is a non-degenerate singular point. On the other hand, if $f \colon (\R^{2},a) \to (\R^{3},f(a))$ which is right-left equivalent to the beaks, then $a \in \R^{2}$ is a degenerate singular point.

\begin{Def}
	Let $f \colon U \to \R^{3}$ be a wave front, and $p \in U$ be a non-degenerate singular point of $f$. Moreover, let $\gamma \colon (-\varepsilon,\varepsilon) \to U$ be a singular curve, and $\eta$ be a null vector field along $\gamma$. Then, a vector filed $\tilde{\eta}$ defined on $U$ such that 
	\[
		\tilde{\eta}(\gamma(t))=\eta(t)\ (\forall t \in (-\varepsilon,\varepsilon))
	\]
	is called an \textit{extended null vector field}.
\end{Def}

We next explain the criteria for singularities of wave fronts used in this section. The following theorem is a criterion for the cuspidal edge, swallowtail and butterfly.

\begin{thm}\label{thm:singular-criterion}
	Let $f \colon U \to \R^{3}$ be a wave front, and $p \in U$ be a non-degenerate singular point of $f$. Moreover, let $\hat{\lambda}$ be a singular identifier and $\tilde{\eta}$ be an extended null vector field.   Then the following holds:
	\begin{enumerate}[\normalfont (1)]
		\item The map germ $f \colon (U,p) \to (\R^{3},f(p))$ is a cuspidal edge at $p$ if and only if the following holds:
		\[
			\hat{\lambda}_{\eta}(p) \ne0,
		\]
		where $\hat{\lambda}_{\eta}:=d\hat{\lambda}(\tilde{\eta})$
		\item 
		The map germ $f \colon (U,p) \to (\R^{3},f(p))$ is a cuspidal edge at $p$ if and only if the following holds:
		\[
			\hat{\lambda}_{\eta}(p)=0,\ \hat{\lambda}_{\eta \eta}(p) \ne0,
		\]
		where $\hat{\lambda}_{\eta \eta}:=d\hat{\lambda}_{\eta}(\tilde{\eta})$
		\item The map germ $f \colon (U,p) \to (\R^{3},f(p))$ is a butterfly at $p$ if and only if the following holds:
		\[
			\hat{\lambda}_{\eta}(p)=\hat{\lambda}_{\eta \eta}(p)=0,\ \hat{\lambda}_{\eta \eta \eta}(p) \ne 0,
		\]
		where $\hat{\lambda}_{\eta \eta \eta}:=d\hat{\lambda}_{\eta \eta}(\tilde{\eta})$.
		\end{enumerate}
\end{thm}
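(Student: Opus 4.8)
The plan is to reduce the assertion to a local normal-form problem and to read off each singularity type from the order of vanishing of $\hat{\lambda}$ in the null direction. Since $p$ is a non-degenerate singular point, $d\hat{\lambda}_{p}\ne0$, so by the implicit function theorem $S(f)=\hat{\lambda}^{-1}(0)$ is a regular curve through $p$ and the null vector field $\eta$ along it is nowhere zero; extend it to a nowhere-vanishing field $\tilde{\eta}$ near $p$. First I would pass to \emph{adapted coordinates}: by the flow-box theorem there is a coordinate system $(u,v)$ centred at $p$ with $\tilde{\eta}=\partial/\partial v$. In these coordinates the iterated derivatives in the theorem become plain $v$-derivatives,
\[
	\hat{\lambda}_{\eta}=\hat{\lambda}_{v},\qquad \hat{\lambda}_{\eta\eta}=\hat{\lambda}_{vv},\qquad \hat{\lambda}_{\eta\eta\eta}=\hat{\lambda}_{vvv},
\]
so the three hypotheses become prescriptions on the order of vanishing of $\hat{\lambda}$ in the $v$-direction at $p$. (One must also check that, once the lower derivatives vanish, the next one is independent of the chosen extension of $\eta$; the flow-box field is the natural choice for which this is automatic.)

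Next I would use that $f$ is a wave front. Writing $f=\pi\circ L$ for a Legendre immersion $L$ and letting $\tilde{\nu}$ be the unit normal, at every singular point $\rank df=1$ and $\eta$ spans $\Ker df$, so $f_{v}=0$ along $S(f)$ while $f_{u}$ stays nonzero. Thus the singular behaviour of $f$ is exactly the behaviour of the cuspidal curves $v\mapsto f(u_{0},v)$, and $\lambda=\det(f_{u},f_{v},\tilde{\nu})$ records their degeneration. The key computation is to Taylor-expand $f$ and $\hat{\lambda}$ in $v$ (using $f_{v}|_{S(f)}=0$ at each order) and to show that the $A_{k}$-type of the front in the $v$-direction is governed precisely by the order of vanishing of $\hat{\lambda}_{v},\hat{\lambda}_{vv},\dots$ at $p$: the front is $A_{2}$ (cuspidal edge) when $\hat{\lambda}_{v}(p)\ne0$; is $A_{3}$ (swallowtail) when $\hat{\lambda}_{v}(p)=0$, $\hat{\lambda}_{vv}(p)\ne0$; and is $A_{4}$ (butterfly) when $\hat{\lambda}_{v}(p)=\hat{\lambda}_{vv}(p)=0$, $\hat{\lambda}_{vvv}(p)\ne0$. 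Geometrically these say that the null direction $\eta(p)$ is transverse to the singular curve (cuspidal edge) or tangent to it to first, respectively second, order (swallowtail, butterfly), the top non-vanishing derivative supplying the non-degeneracy that prevents a still more degenerate singularity.

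Finally I would upgrade this jet-level matching to genuine right-left equivalence. For each case I would compute the relevant finite jet of $f$ in the adapted coordinates, verify that it agrees with that of the model germ $(u,v)\mapsto(u,v^{2},v^{3})$, $(u,3v^{4}+uv^{2},4v^{3}+2uv)$, or $(u,5v^{4}+2uv,uv^{2}+4v^{5})$, and then invoke a recognition (finite-determinacy) theorem for front singularities to produce the diffeomorphism-germs $\varphi$ on the source and $\psi$ on the target realising the equivalence; this is precisely the normal-form package recorded in \cite{USY}. The converse directions are then immediate, since each model germ is itself non-degenerate with $\hat{\lambda}$ vanishing to exactly the stated order along its null field, so the three conditions are necessary as well as sufficient. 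I expect the genuine obstacle to be this last recognition step: turning the jet agreement into an actual right-left equivalence (or citing the precise determinacy statement), because the Taylor bookkeeping in the adapted coordinates, though routine, must be organised so that the front condition is exploited at each order before the normal-form theorem can be applied.
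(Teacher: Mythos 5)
First, a point of comparison: the paper itself does not prove this theorem at all --- it is quoted as a known criterion from the singularity-theory literature (the package in \cite{USY}; the cuspidal-edge and swallowtail criteria go back to Kokubu--Rossman--Saji--Umehara--Yamada, the butterfly criterion to Izumiya--Saji). So there is no in-paper argument to match your proposal against, and the real question is whether your sketch stands on its own. It does not: at the decisive moment you write that one should ``invoke a recognition (finite-determinacy) theorem for front singularities \ldots this is precisely the normal-form package recorded in \cite{USY}.'' That package \emph{is} the theorem you are being asked to prove, so the argument is circular. Everything before that step is genuinely preparatory and mostly correct --- flow-box coordinates with $\tilde{\eta}=\partial/\partial v$ turning $\hat{\lambda}_{\eta},\hat{\lambda}_{\eta\eta},\hat{\lambda}_{\eta\eta\eta}$ into plain $v$-derivatives, $f_{v}=0$ along $S(f)$, invariance of the successive vanishing conditions under rescaling of $\hat{\lambda}$ and $\tilde{\eta}$ --- but none of it touches the hard implication, namely that the stated jet conditions force right-left equivalence to the model germs. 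Your claim that one can ``verify that the finite jet of $f$ agrees with that of the model germ'' is false as stated: at a swallowtail or butterfly point the jets of $f$ in adapted coordinates do not literally agree with the models $(u,3v^{4}+uv^{2},4v^{3}+2uv)$ or $(u,5v^{4}+2uv,uv^{2}+4v^{5})$; producing the source and target diffeomorphisms requires the actual proofs in the literature, which proceed quite differently (reduction to criteria for Whitney cusps of plane-to-plane maps, division-lemma/Malgrange arguments along the singular curve, and the front condition via the unit normal $\tilde{\nu}$), not by raw Taylor matching plus an off-the-shelf determinacy theorem --- no such general $\mathcal{A}$-determinacy statement applies here without the front structure being exploited exactly as those proofs do.

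Two smaller remarks. Your converse direction (computing that each model has $\hat{\lambda}$ vanishing to the stated order along its null field) is fine in outline, but it presupposes the well-definedness of $\hat{\lambda}_{\eta\eta}(p)$ and $\hat{\lambda}_{\eta\eta\eta}(p)$ under change of extension of $\eta$ and of singular identifier, which you only assert parenthetically; this needs the observation that on $S(f)$, once the lower-order quantities vanish at $p$, the next one changes only by a nonzero factor. Finally, you silently corrected the paper's typo --- item (2) of the statement says ``cuspidal edge'' where it must mean ``swallowtail'' --- and your $A_{3}$ reading is the right one; it would be worth flagging that explicitly rather than fixing it without comment.
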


The next theorem is a criterion for beaks.

\begin{thm}\label{thm:singular-criterion-beaks}
	Let $f \colon U \to \R^{3}$ be a wave front, and $p \in U$ be a singular point of $f$ such that $\rank(f_{u}(p),f_{v}(p))=1$. Moreover, let $\hat{\lambda}$ be a singular identifier and $\tilde{\eta}$ be an extended null vector field. Then the map germ $f \colon (U,p) \to (\R^{3},f(p))$ is a beaks at $p$ is and only if the following holds:
	\[
		d\hat{\lambda}(p)=0,\hspace{3mm} \det H_{\hat{\lambda}}(p)<0,\hspace{3mm} \hat{\lambda}_{\eta \eta}(p)\ne 0,
	\]
	where $H_{\hat{\lambda}}$ is the Hessian of $\hat{\lambda}$, that is
	\[
		H_{\hat{\lambda}}(p)=\left(
			\begin{array}{cc}
				\hat{\lambda}_{uu}(p) & \hat{\lambda}_{uv}(p) \\
				\hat{\lambda}_{vu}(p) & \hat{\lambda}_{vv}(p)\\
			\end{array}
		\right).
	\]
\end{thm}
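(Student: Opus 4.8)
The plan is to prove both implications by reducing everything to the beaks normal form $f_0(u,v)=(u,v(u^2-v^2),v^2(2u^2-3v^2))$ and exploiting the fact that the three quantities in the statement are invariants of right-left equivalence of fronts at a degenerate singular point. First I would record that $\hat{\lambda}$ is determined only up to multiplication by a nowhere-vanishing function $c$, and check that this ambiguity does not affect the conditions. Indeed, at a point $p\in S(f)=\hat{\lambda}^{-1}(0)$ with $d\hat{\lambda}(p)=0$ one has $d(c\hat{\lambda})(p)=c(p)\,d\hat{\lambda}(p)=0$ and $H_{c\hat{\lambda}}(p)=c(p)H_{\hat{\lambda}}(p)$, so $\det H_{c\hat{\lambda}}(p)=c(p)^2\det H_{\hat{\lambda}}(p)$ keeps its sign; and since $\hat{\lambda}(p)=\hat{\lambda}_{\eta}(p)=0$, a direct expansion gives $(c\hat{\lambda})_{\eta\eta}(p)=c(p)\hat{\lambda}_{\eta\eta}(p)$, so $\hat{\lambda}_{\eta\eta}(p)\ne0$ is preserved. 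Together with the routine invariance under source and target diffeomorphisms, this shows the three conditions are genuine invariants, so it suffices to verify them on a single representative.

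For necessity I would compute directly with $f_0$. A short calculation gives $\partial_v f_0=(u^2-3v^2)(0,1,4v)$, whence $S(f_0)=\{u^2-3v^2=0\}$, the null direction along $S(f_0)$ is $\partial_v$, and we may take $\tilde{\eta}=\partial_v$. Factoring $\lambda=\det(\partial_u f_0,\partial_v f_0,\tilde{\nu})$ produces $\hat{\lambda}=u^2-3v^2$ up to a unit (the complementary determinant equals $1$ at the origin). Then $d\hat{\lambda}(0)=0$, the Hessian is $\mathrm{diag}(2,-6)$ so $\det H_{\hat{\lambda}}(0)=-12<0$, and $\hat{\lambda}_{\eta\eta}=\hat{\lambda}_{vv}=-6\ne0$. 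Thus the beaks satisfies all three conditions, and by the invariance above so does every front that is a beaks at $p$.

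For sufficiency — the substantive direction — I would run a normal-form reduction. Using $\rank df_p=1$, first choose source coordinates $(u,v)$ centred at $p$ with $f_v(p)=0$, so that $\partial_v$ spans the null direction and $\tilde{\eta}=\partial_v$; the front hypothesis guarantees the Legendre lift is an immersion, which lets me straighten the unit normal by a target diffeomorphism and write $f$ in a form adapted to the contact structure. The hypotheses $d\hat{\lambda}(p)=0$ and $\det H_{\hat{\lambda}}(p)<0$ say that $\hat{\lambda}$ has a nondegenerate \emph{indefinite} critical point at $p$, so by the Morse lemma $S(f)=\hat{\lambda}^{-1}(0)$ is a pair of regular curves crossing transversally at $p$; this is exactly what separates beaks from lips, the definite case $\det H_{\hat{\lambda}}(p)>0$, where the singular set collapses to the point $p$. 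The last condition $\hat{\lambda}_{\eta\eta}(p)\ne0$ controls the behaviour of $f$ along the null direction and is the nondegeneracy needed to pin down the higher jets.

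The hard part will be converting this geometric picture into an honest right-left equivalence with $f_0$. My intended route is the recognition machinery of \cite{USY}: after the adapted normalisation one shows $f$ is finitely determined and that, under the three conditions, the relevant jet lies in the $\mathcal{A}$-orbit of the $3$-jet of $f_0$; the diffeomorphisms $\varphi$ on the source and $\psi$ on the target are then produced by a Thom--Levine homotopy argument, using the Morse coordinates for $\hat{\lambda}$ and the null coordinate $v$ as the two distinguished directions. The principal obstacle is precisely this reduction: the coordinate changes must respect the front structure, so that $\psi$ is compatible with the Legendre lift and $\hat{\lambda}$ transforms only by a unit, and checking that $\hat{\lambda}_{\eta\eta}(p)\ne0$ eliminates all remaining moduli is where the bulk of the computation resides.
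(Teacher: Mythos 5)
The first thing to say is that the paper contains no proof of Theorem~\ref{thm:singular-criterion-beaks} to compare yours against: it is stated in the preliminaries as a known criterion, with all details delegated to \cite{USY}. Judged on its own merits, the easy half of your proposal is sound. The model computation is correct: $\partial_{v}f_{0}=(u^{2}-3v^{2})\,{}^{t}(0,1,4v)$, so $S(f_{0})=\{u^{2}=3v^{2}\}$, $\hat{\lambda}=u^{2}-3v^{2}$ up to a unit, $H_{\hat{\lambda}}(0)=\mathrm{diag}(2,-6)$ with negative determinant, and $\hat{\lambda}_{vv}(0)=-6\ne0$; and your check that multiplying $\hat{\lambda}$ by a unit $c$ preserves all three conditions is right, using $\hat{\lambda}(p)=0$ and $d\hat{\lambda}(p)=0$ to kill the extra terms in $H_{c\hat{\lambda}}(p)$ and $(c\hat{\lambda})_{\eta\eta}(p)$. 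One caveat even here: invariance under a target diffeomorphism is not purely formal, since an arbitrary diffeomorphism of $\R^{3}$ does not preserve unit normals; one must verify that $\nu$ transforms essentially by the inverse transpose of the Jacobian, so that $\lambda$ changes only by a nonvanishing factor. That is a true lemma, but it deserves more than the word ``routine.''

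The genuine gap is the sufficiency direction, which is the entire substantive content of the theorem, and your proposal does not prove it: it defers to ``the recognition machinery of \cite{USY}'' and ``a Thom--Levine homotopy argument'' without executing either --- that is, it cites the very argument it was supposed to supply, and you say as much yourself (``where the bulk of the computation resides''). Moreover, the one concrete claim you do make in that direction is wrong as stated: the relevant jet cannot be the $3$-jet of $f_{0}$, because the third component $v^{2}(2u^{2}-3v^{2})$ is homogeneous of degree $4$, so $j^{3}f_{0}=(u,u^{2}v-v^{3},0)$. This $3$-jet does not recognize the beaks: the germ $(u,u^{2}v-v^{3},0)$ itself has the same $3$-jet and is not even a front at the origin (its Legendre lift fails to be an immersion there), so any honest recognition argument must work with at least the $4$-jet and must use the Legendre-immersion hypothesis essentially. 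Likewise, Morse indefiniteness of $\hat{\lambda}$ only identifies the singular set as a transverse crossing --- enough to separate the beaks configuration from the lips, but not to exclude more degenerate front germs with the same crossing singular set; feeding $\hat{\lambda}_{\eta\eta}(p)\ne0$ into an actual determinacy computation is exactly the step that is missing. In short: you have a correct proof of necessity and a plausible table of contents for sufficiency, but not a proof of the theorem.
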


This completes the explanation of the terminology and facts of singularity theory used in this paper. At the end of this subsection, we consider the relationship between geometric singular solutions of (\ref{ODS1}) and wave fronts. Let us recall that the GMAS $\calI$ is generated by the natural contact form on $PT^{\ast}J^{0}(2,1)$ and $\Psi$. Therefore, all two-dimensional integral manifolds of $\calI$ are also Legendrian submanifolds in $J^{1}(2,1)$. Hence we have the following proposition. 

\begin{prop}\label{prop:Legendre-wavefront}
	The projection of a two-dimensional integral manifold of a GMAS defined on $J^{1}(2,1)$ by $\varpi \colon J^{1}(2,1) \to J^{0}(2,1)$ is a wave front. 
\end{prop}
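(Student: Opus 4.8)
The plan is to recognize that a two-dimensional integral manifold of $\calI$ is automatically a Legendrian submanifold of $J^{1}(2,1)=PT^{\ast}J^{0}(2,1)$, and then to invoke the structural theorem quoted just above (``the projection of a Legendre submanifold by a Legendrian fibration is a wave front''). In other words, the proof is a formal consequence of the definitions together with the identification of $J^{1}(2,1)$ with the projective cotangent bundle made in Section~2, and no computation is required.

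First I would record that, by the definition of the GMAS, $\calI=\{\omega_{0},\Psi\}_{\mathrm{diff}}$, so in particular $\omega_{0}\in\calI$. Hence, for the inclusion $\iota\colon S\hookrightarrow J^{1}(2,1)$ of a two-dimensional integral manifold, the integrality condition $\iota^{\ast}\calI=\{0\}$ forces $\iota^{\ast}\omega_{0}=0$. Since $\omega_{0}$ is the natural contact form on $J^{1}(2,1)=PT^{\ast}J^{0}(2,1)$, this says that $S$ is an isotropic submanifold of the contact structure. Because $J^{1}(2,1)$ is five-dimensional, a Legendrian (that is, maximal isotropic) submanifold has dimension $2$; thus the dimension count shows that $S$ is in fact a Legendrian submanifold and that $\iota$ is a Legendre immersion.

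Next I would apply the remark (recorded as an Example above) that $\varpi\colon J^{1}(2,1)\to J^{0}(2,1)$ is a Legendrian fibration, together with the theorem that the projection of a Legendrian submanifold by a Legendrian fibration is a wave front. Applying this to $S$ yields at once that $\varpi|_{S}\colon S\to J^{0}(2,1)$ is a wave front. Equivalently, one may check the definition of wave front directly: with $N:=J^{0}(2,1)$ one has $P(T^{\ast}N)=PT^{\ast}J^{0}(2,1)=J^{1}(2,1)$, so taking $L:=\iota$ (a Legendre immersion by the previous step) and $\pi:=\varpi$ gives the factorization $\varpi|_{S}=\pi\circ L$, which is exactly the defining property of a wave front.

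The only point that needs a word of care is the identification $J^{1}(2,1)=PT^{\ast}J^{0}(2,1)$, which makes the ambient contact manifold of $\calI$ coincide with the projective cotangent bundle appearing in the definition of a wave front; this is precisely how $J^{1}(2,1)$ was constructed. There is no genuine obstacle: the substantive content is the dimension argument identifying two-dimensional integral manifolds of $\calI$ with Legendrian submanifolds, after which the conclusion follows formally.
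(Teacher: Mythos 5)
Your proposal is correct and follows essentially the same route as the paper: the paper likewise observes that $\omega_{0}\in\calI$ forces every two-dimensional integral manifold to be a Legendrian submanifold of $J^{1}(2,1)=PT^{\ast}J^{0}(2,1)$, and then invokes the stated theorem that the projection of a Legendrian submanifold by the Legendrian fibration $\varpi$ is a wave front. Your added dimension count and the explicit factorization $\varpi|_{S}=\pi\circ\iota$ merely spell out details the paper leaves implicit.
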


By Proposition~\ref{prop:Legendre-wavefront}, the projection of a geometric singular solution of the GMAE (\ref{ODS1}) is a wave front.

\subsection{Case of generic type}

In this subsection, we construct geometric singular solutions of (\ref{ODS1}) which is of generic type. Then, by Proposition~\ref{prop:form-I/ChI-1}, the reduced EDS $\tilde{\calI}$ is written by
\[
	\tilde{\calI}=\{dx_{2}+x_{1}dx_{4},dx_{3}-\overline{\alpha}dx_{4}\}_{\mathrm{diff}},\hspace{2mm}\overline{\alpha}:=\alpha \circ \Phi^{-1}_{1}.
\]
We here explain the construction of geometric singular solutions of (\ref{ODS1}) by using the method of Cauchy characteristics. Let $C$ be an integral curve of $\tilde{\calI}$, and 
\[
	C=\{(x_{1}(t),x_{2}(t),x_{3}(t),x_{4}(t))\ |\ t\in I\}
\]
be a local coordinate expression of $C$, where $I$ is some open interval in $\R$. Since $C$ is a one-dimensional submanifold in $U_{1} (\subset J^{1}(2,1)/\Ch(\calI))$, we can take any of $x_{i}(t)\ (i=1,\ldots,4)$ as an independent variable $t$ by the implicit function theorem. We assume $x_{4}(t)=t$. Then, since $C$ is an integral curve of $\tilde{\calI}$, the functions $x_{i}(t)\ (i=1,2,3)$ must satisfy the following system of ordinary differential equations:
\renewcommand{\arraystretch}{2}
\begin{flalign}\label{ODE1}
	\left\{
		\begin{array}{l}
			\displaystyle \frac{dx_{2}}{dt} + x_{1}(t)=0 \\
			\displaystyle \frac{dx_{3}}{dt} -\overline{\alpha}(x_{1}(t),x_{2}(t),x_{3}(t),t) =0.
		\end{array}
	\right.
\end{flalign}
\renewcommand{\arraystretch}{1}

\noindent
Here, let $\xi(t)$ be an arbitrary function, and we put 
$x_{2}:=\xi(t)$. By the first formula of (\ref{ODE1}), we have $x_{1}=-\xi'(t)$. Moreover, let $\mu(t)$ be a solution of the second formula of (\ref{ODE1}) under some initial conditions. Therefore we have the following proposition.

\begin{prop}\label{prop:singular-solution1}
	Let $\xi(t)$ be an arbitrary function, and $\mu(t)$ be a solution of the following ordinary differential equation:
	\begin{flalign}\label{ODE-eta1}
		\frac{dx_{3}}{dt} -\overline{\alpha}(-\xi'(t),\xi(t),x_{3}(t),t) =0.
	\end{flalign}
	Then the following manifold
	\begin{flalign*}
		&S_{1}:=\{
			(s,\varphi_{1}(s,-\xi'(t),\xi(t),\mu(t),t),\\
			&\hspace{30mm}\psi_{1}(s,-\xi'(t),\xi(t),\mu(t),t),\mu(t),t)\ |\ s\in I_{1},t \in J_{1}
			\} \subset J^{1}(2,1)
	\end{flalign*}
	is a two-dimensional integral manifold of $\calI$. Here $I_{1}$ and $J_{1}$ are some open intervals in $\R$.
\end{prop}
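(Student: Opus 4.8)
The plan is to realize $S_{1}$ as the image of the single map
\[
	F\colon (s,t) \mapsto \Phi_{1}^{-1}(s,-\xi'(t),\xi(t),\mu(t),t),
\]
whose components are precisely $(s,\varphi_{1}(\cdots),\psi_{1}(\cdots),\mu(t),t)$ by the definition of $\varphi_{1},\psi_{1}$ as the second and third components of $\Phi_{1}^{-1}$, and then to check directly that $F$ pulls back every generator of $\calI$ to zero. First I would confirm that $S_{1}$ is genuinely two-dimensional: writing $F=\Phi_{1}^{-1}\circ G$ with $G(s,t)=(s,-\xi'(t),\xi(t),\mu(t),t)$, the map $G$ is an immersion because $\partial_{s}G=(1,0,0,0,0)$ and $\partial_{t}G=(0,-\xi''(t),\xi'(t),\mu'(t),1)$ are linearly independent (the $x_{4}$-slot already separates them), and $\Phi_{1}^{-1}$ is a local diffeomorphism; hence $F$ is an immersion and $S_{1}$ is a two-dimensional submanifold.

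The core computation is to pull back $\omega_{0}$ and $\Psi$ through $F=\Phi_{1}^{-1}\circ G$, so that $F^{\ast}=G^{\ast}\circ(\Phi_{1}^{-1})^{\ast}$, and to feed in the formulas already established in Proposition~\ref{prop:form-I/ChI-1}, namely $(\Phi_{1}^{-1})^{\ast}\Psi=dx_{3}-\overline{\alpha}\,dx_{4}$ from (\ref{phi-inverse-form1}) and $(\Phi_{1}^{-1})^{\ast}(\omega_{0}-x\Psi)=dx_{2}+x_{1}\,dx_{4}$ from (\ref{defform-omega0-xPsi1}). On the image of $G$ we have $x_{1}=-\xi'(t)$, $x_{2}=\xi(t)$, $x_{3}=\mu(t)$ and $x_{4}=t$, so $G^{\ast}dx_{2}=\xi'(t)\,dt$, $G^{\ast}dx_{3}=\mu'(t)\,dt$ and $G^{\ast}dx_{4}=dt$. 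By Lemma~\ref{lem:alpha-function} the function $\overline{\alpha}$ does not depend on the fiber coordinate $x_{0}=s$, so its value along $G$ is $\overline{\alpha}(-\xi'(t),\xi(t),\mu(t),t)$. Substituting gives
\[
	F^{\ast}\Psi=\bigl(\mu'(t)-\overline{\alpha}(-\xi'(t),\xi(t),\mu(t),t)\bigr)\,dt=0
\]
by the defining equation (\ref{ODE-eta1}) for $\mu$, and
\[
	F^{\ast}(\omega_{0}-x\Psi)=\xi'(t)\,dt+(-\xi'(t))\,dt=0
\]
by the choice $x_{1}=-\xi'(t)$. Since $F^{\ast}(x\Psi)=(x\circ F)\,F^{\ast}\Psi=0$, I then obtain $F^{\ast}\omega_{0}=0$ as well.

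With $F^{\ast}\omega_{0}=F^{\ast}\Psi=0$ in hand, the two remaining generators are handled for free: $F^{\ast}d\omega_{0}=d(F^{\ast}\omega_{0})=0$ and $F^{\ast}d\Psi=d(F^{\ast}\Psi)=0$. Because $\calI=\{\omega_{0},\Psi\}_{\mathrm{diff}}=\{\omega_{0},\Psi,d\omega_{0},d\Psi\}_{\mathrm{alg}}$ and $F^{\ast}$ is a ring homomorphism annihilating each generator, it annihilates all of $\calI$; thus the pullback of $\calI$ by the inclusion $S_{1}\hookrightarrow J^{1}(2,1)$ is $\{0\}$, and $S_{1}$ is a two-dimensional integral manifold of $\calI$.

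The step I expect to require the most care is the elimination of the fiber variable $s=x_{0}$: the whole point of working with $\omega_{0}-x\Psi$ rather than $\omega_{0}$ itself is that this combination carries no $x_{0}$-dependence and no $dx_{0}$-term, and the analogous fact for $\Psi$ rests on Lemma~\ref{lem:alpha-function}. Once one trusts that $\overline{\alpha}$ descends to $U_{1}$ and that the pullback identities (\ref{phi-inverse-form1}) and (\ref{defform-omega0-xPsi1}) hold, the remaining substitutions are the routine one-variable relations $x_{1}=-\xi'(t)$ and the ordinary differential equation (\ref{ODE-eta1}). Conceptually, $S_{1}$ is nothing but the saturation $\pi^{-1}(C)$ of the integral curve $C=\{(-\xi'(t),\xi(t),\mu(t),t)\}$ of $\tilde{\calI}$ by the fibers of $\pi$, which is exactly why it is forced to be an integral manifold of $\calI=\{\pi^{\ast}\tilde{\calI}\}_{\mathrm{alg}}$.
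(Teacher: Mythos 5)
Your proof is correct and follows exactly the route the paper intends: the paper states this proposition without a written proof, as an immediate consequence of the method of Cauchy characteristics --- namely that $S_{1}$ is the $\pi$-saturation of the integral curve $C$ of $\tilde{\calI}$ determined by the system (\ref{ODE1}), combined with $\calI=\{\pi^{\ast}\tilde{\calI}\}_{\mathrm{alg}}$ from Proposition~\ref{prop:form-I/ChI-1}. Your explicit verification through $F=\Phi_{1}^{-1}\circ G$, using the pullback identities (\ref{phi-inverse-form1}) and (\ref{defform-omega0-xPsi1}) together with Lemma~\ref{lem:alpha-function} to handle the fiber coordinate $x_{0}$, simply writes out that same argument in full detail.
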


We recall that $\varphi_{1}$ and $\psi_{1}$ are second and third components of $\Phi^{-1}_{1}$, respectively. Hereafter, we consider singular points of the projection of $S_{1}$ to $J^{0}(2,1)$. Here, we define a map $H_{1} \colon S_{1} \to I_{1} \times J_{1}$ by
\[
	(s,\varphi_{1}(s,-\xi'(t),\xi(t),\mu(t),t),\psi_{1}(s,-\xi'(t),\xi(t),\mu(t),t),\mu(t),t) \mapsto (s,t).
\]
	Namely, the map $H_{1}$ is a local coordinate system of $S_{1}$.
	
\begin{nota}\label{nota:tilde}
	For technical reasons in studying singularities, we introduce the following notation: let $f$ be a function defined on $V_{1}( \subset \R \times J^{1}(2,1)/\Ch(\calI))$. Then we put
	\[
		\widetilde{f}:=f \circ \Phi_{1}|_{S_{1}} \circ H_{1}^{-1} \colon I_{1}\times J_{1} \to \R.
	\]
	Roughly speaking, the function $\widetilde{f}$ is the local coordinate expression of the map which restricts $f$ to $S_{1}$.
\end{nota}

\begin{rem}
	Let $g$ be a function defined on $\Phi^{-1}_{1}(V_{1}) (\subset J^{1}(2,1))$. In Section~4, we defined the notation $\overline{g}$. Here, we remark that the following holds
	\begin{flalign*}
		\widetilde{\overline{g}}=\overline{g} \circ \Phi_{1}|_{S_{1}} \circ H_{1}^{-1}= g \circ \Phi^{-1}_{1} \circ \Phi_{1}|_{S_{1}} \circ H_{1}^{-1} =g|_{S_{1}} \circ H^{-1}_{1}.
	\end{flalign*}
	See the following diagram for detail.
	\begin{center}
		\begin{tikzpicture}[auto]
			\node (Phi) at (0, 0) {$\Phi^{-1}_{1}(V_{1})$}; 
			\node (V1) at (2, 0) {$V_{1}$}; 
			\node (S) at (-2, 0) {$S_{1}$};
			\node (I1J1) at (-4, 0) {$I_{1}\times J_{1}$};
			\node (R) at (0, -1.5) {$\R$}; 
			\draw[->] (V1) to node [swap]{$\scriptstyle \Phi^{-1}_{1}$} (Phi);
			\draw[->] (Phi) to node {$\scriptstyle g$} (R);
			\draw[{Hooks[right]}->] (S) to node {} (Phi);
			\draw[->] (I1J1) to node {$\scriptstyle H^{-1}_{1}$} (S);
			\draw[->] (I1J1) to node [swap]{$\scriptstyle \widetilde{\overline{g}}$} (R);
			\draw[->] (V1) to node {$\scriptstyle \overline{g}$} (R);
		\end{tikzpicture}
		\end{center}
		Roughly speaking, the function $ \widetilde{\overline{g}}$ is also the local coordinate expression of the function which restricts $g$ to $S_{1}$.
\end{rem}

We next give a property of this notation.

\begin{lem}
	 Let $f_{1}$ and $f_{2}$ be functions defined on $V_{1}$. Then the following holds:
	\begin{flalign}\label{property-bar1}
		\widetilde{f_{1}+f_{2}}=\widetilde{f_{1}}+\widetilde{f_{2}},\hspace{5mm}\widetilde{f_{1}f_{2}}=\widetilde{f_{1}}\widetilde{f_{2}}.
	\end{flalign}
\end{lem}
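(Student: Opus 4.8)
The plan is to observe that the operation $\widetilde{\,\cdot\,}$ is nothing but precomposition with a single fixed map, exactly as for the operation $\overline{\,\cdot\,}$ treated in Notation~\ref{notation:function-bar}. Indeed, writing $G := \Phi_{1}|_{S_{1}} \circ H_{1}^{-1} \colon I_{1}\times J_{1} \to V_{1}$, the definition in Notation~\ref{nota:tilde} reads $\widetilde{f} = f \circ G$. Thus the statement is an instance of the elementary fact that composing on the right with a fixed map respects the pointwise ring operations on real-valued functions, and the proof should mirror the one given for the analogous property of $\overline{\,\cdot\,}$.

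Concretely, first I would fix an arbitrary point $(s,t) \in I_{1}\times J_{1}$ and set $w := G(s,t) \in V_{1}$. Since the sum $f_{1}+f_{2}$ and the product $f_{1}f_{2}$ of functions on $V_{1}$ are defined pointwise, evaluating at $w$ gives $(f_{1}+f_{2})(w) = f_{1}(w) + f_{2}(w)$ and $(f_{1}f_{2})(w) = f_{1}(w)\,f_{2}(w)$. By the definition of $\widetilde{\,\cdot\,}$, the left-hand sides are precisely $\widetilde{f_{1}+f_{2}}(s,t)$ and $\widetilde{f_{1}f_{2}}(s,t)$, whereas the right-hand sides are $\widetilde{f_{1}}(s,t)+\widetilde{f_{2}}(s,t)$ and $\widetilde{f_{1}}(s,t)\,\widetilde{f_{2}}(s,t)$. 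As $(s,t)$ was arbitrary, the two asserted identities follow.

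There is no genuine obstacle here: the only point requiring attention is that $G$ is well defined as a map into $V_{1}$, so that $f_{1}$ and $f_{2}$ may indeed be evaluated at $G(s,t)$; this is guaranteed by the constructions of $\Phi_{1}$, $S_{1}$ and $H_{1}$ in the preceding discussion. Hence, exactly as for the corresponding property of $\overline{\,\cdot\,}$, the lemma is immediate from the fact that it is a property of the composition of mappings.
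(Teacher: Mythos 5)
Your proof is correct and takes exactly the route the paper intends: the paper's own proof simply states that the identities are ``obvious because it is a property of the composition of mappings,'' and your argument spells out precisely that observation, writing $\widetilde{f}=f\circ G$ with $G:=\Phi_{1}|_{S_{1}}\circ H_{1}^{-1}$ and evaluating pointwise. The only difference is that you make the precomposition structure and the well-definedness of $G$ into $V_{1}$ explicit, which the paper leaves implicit.
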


\begin{proof}
	This is obvious because it is a property of the composition of mappings. 
\end{proof}

Under the above setting, now we study the singular points of $\varpi \colon S_{1}\to J^{0}(2,1)$, where $S_{1}$ is the integral manifold of the GMAS $\calI$ defined in Proposition~\ref{prop:singular-solution1}. Here we define $F_{1} \colon I_{1}\times J_{1} \to J^{0}(2,1)$ by
\begin{flalign}\label{def-F1}
	F_{1}(s,t):=(s,\widetilde{\varphi_{1}}(s,t),\widetilde{\psi_{1}}(s,t)).
\end{flalign}
The map $F_{1}$ is called \textit{the map associated with $S_{1}$}. One can easily see that $F_{1}=\varpi \circ H^{-1}_{1}$. Since $J^{0}(2,1)$ is a three-dimensional manifold, then it is locally diffeomorphic to $\R^{3}$. Therefore, the study of singularity of $\varpi \colon S_{1} \to J^{0}(2,1)$ coincides with the study of singularity of  $F_{1} \colon I_{1} \times J_{1} \to J^{0}(2,1)$ associated with $S_{1}$.

\begin{center}
		\begin{tikzpicture}[auto]
			\node (S) at (0, 0) {$S_{1}$}; 
			\node (J0) at (2, 0) {$J^{0}(2,1)$};
			\node (R3) at (4, 0) {$\R^{3}$}; 
			\node (I1J1) at (0, -1.5) {$I_{1}\times J_{1}$}; 
			\node (subR) at (1.1, -1.48) {$(\subset \R^{2})$};
			\draw[{Hooks[left]}->] (I1J1) to node {$\scriptstyle H^{-1}_{1}$} (S);
			\draw[->] (S) to node {$\scriptstyle \varpi$} (J0);
			\draw[->] (J0) to node {$\scriptstyle \sim $} (R3);
			\draw[->] (I1J1) to node [swap]{$\scriptstyle F_{1}$} (R3);
		\end{tikzpicture}
		\end{center}

In order to calculate the Jacobian of $F_{1}$, we consider the partial derivatives of $\widetilde{\varphi_{1}}$ and $\widetilde{\psi_{1}}$ with respect to the coordinate $s$ and $t$.
\begin{lem}\label{lem:calc-phi-psi-1}
	For the functions $\varphi_{1}$ and $\psi_{1}$, the following holds:
		
	\noindent
	$(1)\ (\widetilde{\varphi_{1}})_{s}=-\widetilde{\overline{\alpha}},
		\hspace{11.5mm}
		(2)\ (\widetilde{\varphi_{1}})_{t}=-\widetilde{\overline{\rho}} \left(\xi''(t)+\widetilde{\overline{x (\alpha_{q}+\alpha \alpha_{p})}}\right),
	$
	
	\vspace{2mm}
	\noindent
	$(3)\ (\widetilde{\psi_{1}})_{s}=\widetilde{\overline{p-\alpha q}},
		\hspace{6.5mm}
		(4)\ (\widetilde{\psi_{1}})_{t} = -\widetilde{\overline{\rho q}} \left(\xi''(t)+\widetilde{\overline{x (\alpha_{q}+\alpha \alpha_{p})}}\right),
	$
	
	\noindent
	where $\rho=1/(1+x(\alpha_{y}+q\alpha_{z}))$ which is the function defined in Section 4.
\end{lem}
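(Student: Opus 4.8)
The plan is to reduce all four identities to the chain rule, reading off the partial derivatives of $\varphi_1$ and $\psi_1$ directly from the Jacobian rows already recorded in (\ref{dphi1}) and (\ref{dpsi1}). The starting point is the observation that, by the construction of $S_1$ and of $H_1$, one has $\Phi_1|_{S_1}\circ H_1^{-1}(s,t)=(s,-\xi'(t),\xi(t),\mu(t),t)$, so that for any function $f$ on $V_1$ the restriction is simply $\widetilde f(s,t)=f(s,-\xi'(t),\xi(t),\mu(t),t)$; in particular $\widetilde{\varphi_1}(s,t)=\varphi_1(s,-\xi'(t),\xi(t),\mu(t),t)$ and likewise for $\widetilde{\psi_1}$.

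Identities (1) and (3) are then immediate: only the first argument $x_0=s$ depends on $s$, so differentiating in $s$ and taking the $dx_0$-coefficients in (\ref{dphi1}) and (\ref{dpsi1}) gives $(\widetilde{\varphi_1})_s=\widetilde{(\varphi_1)_{x_0}}=-\widetilde{\overline\alpha}$ and $(\widetilde{\psi_1})_s=\widetilde{(\psi_1)_{x_0}}=\widetilde{\overline{p-\alpha q}}$, which are precisely the asserted formulas.

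For (2) and (4) I would differentiate in $t$, where now all of $x_1=-\xi'(t)$, $x_2=\xi(t)$, $x_3=\mu(t)$, $x_4=t$ vary. Applying the chain rule and inserting the remaining coefficients of (\ref{dphi1}), respectively (\ref{dpsi1}), yields a combination of $\xi''(t)$, $\xi'(t)$, $\mu'(t)$ and a free term. At this stage I would feed in the two structural relations that hold along $S_1$: from the defining ODE (\ref{ODE-eta1}), together with Lemma~\ref{lem:alpha-function} (which removes the spurious $x_0$-dependence so that the answer is a function of $t$ alone), one has $\mu'(t)=\widetilde{\overline\alpha}$; and since the coordinate $x_1$ equals $\overline{y+\alpha x}$ on $V_1$ while $x_1=-\xi'(t)$ on $S_1$, one has $\xi'(t)=-\widetilde{\overline{y+\alpha x}}$. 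Using the multiplicativity of the bar- and tilde-operations, each resulting product can be written as $\widetilde{\overline g}$ for a single function $g$ on $\Phi_1^{-1}(V_1)$.

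The one step requiring care is the algebraic collapse inside that bracket. After the substitutions the $\xi''(t)$-term carries the factor $\overline\rho$ (resp. $\overline{\rho q}$), while the remaining three terms combine; the expected cancellation is that the $y\alpha_z$ contributions cancel in pairs and the $x^2\alpha\alpha_z$ contributions cancel, leaving exactly $-\overline{\rho x(\alpha_q+\alpha\alpha_p)}$ in case (2) and $-\overline{\rho q x(\alpha_q+\alpha\alpha_p)}$ in case (4). Factoring out $-\widetilde{\overline\rho}$, respectively $-\widetilde{\overline{\rho q}}$, then exhibits the common factor $\xi''(t)+\widetilde{\overline{x(\alpha_q+\alpha\alpha_p)}}$ and gives the stated forms. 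Keeping the $\rho$, $x$ and $q$ weights straight through these cancellations is the only place where a computational slip could occur; everything else is mechanical bookkeeping.
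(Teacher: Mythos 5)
Your proposal is correct and follows essentially the same route as the paper's own proof: the chain rule against the Jacobian rows (\ref{dphi1}) and (\ref{dpsi1}), the substitutions $\mu'(t)=\widetilde{\overline{\alpha}}$ from (\ref{ODE-eta1}) and $\xi'(t)=-\widetilde{\overline{y+\alpha x}}$ coming from $x_{1}=y+\alpha x$, and the same cancellations of the $y\alpha_{z}$ and $x^{2}\alpha\alpha_{z}$ terms before factoring out $-\widetilde{\overline{\rho}}$ (resp. $-\widetilde{\overline{\rho q}}$). The only cosmetic difference is that you invoke Lemma~\ref{lem:alpha-function} explicitly to justify that $\widetilde{\overline{\alpha}}$ carries no spurious $x_{0}$-dependence, a point the paper leaves implicit.
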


\begin{proof}
	We have only to show (1) and (2), since one can show other cases, similarly. First of all, we remark that
	\begin{flalign*}
		\widetilde{\varphi_{1}}=\varphi_{1}(s,-\xi'(t),\xi(t),\mu(t),t),\hspace{4mm}
		\widetilde{\psi_{1}}=\psi_{1}(s,-\xi'(t),\xi(t),\mu(t),t).
	\end{flalign*}
	Since $\eta(t)$ is a solution of (\ref{ODE-eta1}), we have
	\[
		\mu'(t) = \overline{\alpha}(-\xi'(t),\xi(t),\mu(t),t)=\widetilde{\overline{\alpha}}.
	\]
	
	We show (1). By (\ref{dphi1}), we have
		\begin{flalign*}
			(\widetilde{\varphi_{1}})_{s} &=\widetilde{(\varphi_{1})_{x_{0}}}(s)_{s}+\widetilde{(\varphi_{1})_{x_{1}}}(-\xi'(t))_{s}+\widetilde{(\varphi_{1})_{x_{2}}}(\xi(t))_{s}+\widetilde{(\varphi_{1})_{x_{3}}}(\mu(t))_{s} + \widetilde{(\varphi_{1})_{4}}(t)_{s}\\
			&=-\widetilde{\overline{\alpha}},\\
			(\widetilde{\varphi_{1}})_{t} &=\widetilde{(\varphi_{1})_{x_{0}}}(s)_{t}+\widetilde{(\varphi_{1})_{x_{1}}}(-\xi'(t))_{t}+\widetilde{(\varphi_{1})_{x_{2}}}(\xi(t))_{t}+\widetilde{(\varphi_{1})_{x_{3}}}(\mu(t))_{t} + \widetilde{(\varphi_{1})_{4}}(t)_{t}\\
			&=-\widetilde{\overline{\rho}}\xi''(t)-\widetilde{\overline{\rho x \alpha_{z}}}\xi'(t)-\widetilde{\overline{\rho x (\alpha_{p}+x\alpha_{z})}} \mu'(t) -\widetilde{\overline{\rho x (\alpha_{q} + y\alpha_{z})}} \\
			&=-\widetilde{\overline{\rho}}\xi''(t)+\widetilde{\overline{\rho x \alpha_{z}(y+\alpha x)}}-\widetilde{\overline{\rho x \alpha (\alpha_{p}+x\alpha_{z})}} -\widetilde{\overline{\rho x (\alpha_{q} + y\alpha_{z})}}\\
			&=-\widetilde{\overline{\rho}} \left(\xi''(t)+\widetilde{\overline{x (\alpha_{q}+\alpha \alpha_{p})}}\right).
			\end{flalign*}
	
	On the other hand, since (3) and (4) can be proved similar to (1) and (2), which completes the proof. 
\end{proof}

\begin{prop}\label{prop:singular-set1}
	For the map $F_{1}$ associated with $S_{1}$, the following holds:
	\begin{flalign}\label{singset-F1}
		S(F_{1}) = \{(s,t)\in I_{1} \times J_{1}\ |\ \xi''(t) + \widetilde{\overline{x(\alpha_{q}+\alpha \alpha_{p})}}(s,t)=0\}.
	\end{flalign}
\end{prop}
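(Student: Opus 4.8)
The plan is to compute the differential of $F_1$ directly and to read off the points where it fails to be an immersion. Writing $F_1(s,t)=(s,\widetilde{\varphi_1}(s,t),\widetilde{\psi_1}(s,t))$, its Jacobian with respect to the coordinates $(s,t)$ is the $3\times 2$ matrix
\[
	dF_1=\left(
		\begin{array}{cc}
			1 & 0 \\
			(\widetilde{\varphi_1})_s & (\widetilde{\varphi_1})_t \\
			(\widetilde{\psi_1})_s & (\widetilde{\psi_1})_t
		\end{array}
	\right).
\]
Since $F_1$ sends a two-dimensional domain into the three-dimensional manifold $J^{0}(2,1)$, a point $(s,t)$ lies in $S(F_1)$ precisely when $\rank dF_1<2$, equivalently when all three $2\times2$ minors of $dF_1$ vanish.

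Next I would single out the minors formed with the first row. Because that row is $(1,0)$, the minor on rows $1,2$ equals $(\widetilde{\varphi_1})_t$ and the minor on rows $1,3$ equals $(\widetilde{\psi_1})_t$, while the remaining minor on rows $2,3$ is $(\widetilde{\varphi_1})_s(\widetilde{\psi_1})_t-(\widetilde{\varphi_1})_t(\widetilde{\psi_1})_s$, which automatically vanishes once the first two do. Hence $(s,t)\in S(F_1)$ if and only if $(\widetilde{\varphi_1})_t=(\widetilde{\psi_1})_t=0$.

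Then I would invoke Lemma~\ref{lem:calc-phi-psi-1} (2) and (4), which give
\[
	(\widetilde{\varphi_1})_t=-\widetilde{\overline{\rho}}\left(\xi''(t)+\widetilde{\overline{x(\alpha_q+\alpha\alpha_p)}}\right),\qquad
	(\widetilde{\psi_1})_t=-\widetilde{\overline{\rho q}}\left(\xi''(t)+\widetilde{\overline{x(\alpha_q+\alpha\alpha_p)}}\right).
\]
Both $t$-derivatives share the common factor $\xi''(t)+\widetilde{\overline{x(\alpha_q+\alpha\alpha_p)}}$. Since $\rho=1/(1+x(\alpha_y+q\alpha_z))$ is defined and nowhere zero in the generic-type setting, the coefficient $\widetilde{\overline{\rho}}$ never vanishes. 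Consequently $(\widetilde{\varphi_1})_t=0$ is equivalent to $\xi''(t)+\widetilde{\overline{x(\alpha_q+\alpha\alpha_p)}}=0$, and when this holds $(\widetilde{\psi_1})_t=0$ follows from the same factorization; conversely, if both $t$-derivatives vanish, then in particular $(\widetilde{\varphi_1})_t=0$ forces the scalar factor to be zero. This yields exactly (\ref{singset-F1}).

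The computation is short, and I expect no genuine obstacle beyond bookkeeping. The only point requiring care is the rank argument: one must observe that the special form $(1,0)$ of the first row collapses the three-minor vanishing condition to a single scalar equation, and that the nonvanishing of $\widetilde{\overline{\rho}}$ allows $(\widetilde{\varphi_1})_t$ alone to detect the singular set (so that the possible vanishing of $\widetilde{\overline{\rho q}}$ along $\{q=0\}$ causes no trouble).
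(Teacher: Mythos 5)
Your proposal is correct and takes essentially the same route as the paper: both compute the Jacobian of $F_{1}$ via Lemma~\ref{lem:calc-phi-psi-1} and use the nonvanishing of $\widetilde{\overline{\rho}}$ to reduce the singularity condition to the vanishing of the common factor $\xi''(t)+\widetilde{\overline{x(\alpha_{q}+\alpha\alpha_{p})}}$. Your explicit discussion of the rank condition via the three $2\times 2$ minors (and the observation that the first row $(1,0)$ collapses it to the vanishing of the second column) merely spells out a step the paper leaves implicit.
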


\begin{proof}
	In order to show this proposition, we calculate the Jacobian of $F_{1}$. By (\ref{def-F1}) and Lemma~\ref{lem:calc-phi-psi-1}, we have
	\begin{flalign}\label{Jacobian-F1}
		\notag((F_{1})_{s} \hspace{2mm}(F_{1})_{t})
		&=\left(
			\begin{array}{cc}
				(s)_{s} & (s)_{t} \\
				(\widetilde{\varphi_{1}})_{s} &(\widetilde{\varphi_{1}})_{t} \\
				(\widetilde{\psi_{1}})_{s} & (\widetilde{\psi_{1}})_{t}
			\end{array}	
		\right)\\
		&= \left(
			\begin{array}{cc}
				1 & 0 \\
				-\widetilde{\overline{\alpha}} & -\widetilde{\overline{\rho}} \left(\xi''(t)+\widetilde{\overline{x (\alpha_{q}+\alpha \alpha_{p})}}\right)\\
				\widetilde{\overline{p-\alpha q}} & -\widetilde{\overline{\rho q}} \left(\xi''(t)+\widetilde{\overline{x (\alpha_{q}+\alpha \alpha_{p})}}\right)
			\end{array}
		\right).
	\end{flalign}
	By $\rho=1/(1+x(\alpha_{x}+q\alpha_{z}))$, we have
	\[
		\widetilde{\overline{\rho}}=1/(1+\widetilde{\overline{x(\alpha_{y}+q\alpha_{z})}}),
	\]
	and thus $\widetilde{\overline{\rho}}$ also does not take the value 0. Hence the point $(s,t)$ is a singular point of $F_{1}$ if and only if $(s,t)$ satisfies
	\[
		\xi''(t) + \widetilde{\overline{x(\alpha_{q}+\alpha \alpha_{p})}}(s,t)=0,
	\]
	which completes the proof. 
\end{proof}

We next calculate the singular identifier of $F_{1}$.
\begin{lem}\label{lem:singular-identifire}
	The singular identifier $\hat{\lambda}$ of $F_{1}$ is given by 
\begin{flalign}\label{sing-identifier-1}
	\hat{\lambda}(s,t):=\xi''(t) + \widetilde{\overline{x(\alpha_{q}+\alpha \alpha_{p})}}.
\end{flalign}
\end{lem}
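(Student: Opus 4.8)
The plan is to compute $\lambda(s,t)=\det(F_{1s},F_{1t},\tilde{\nu})$ explicitly and to exhibit a factorization $\lambda=\hat{\lambda}\cdot g$ with $\hat{\lambda}=\xi''(t)+\widetilde{\overline{x(\alpha_{q}+\alpha\alpha_{p})}}$ and $g$ nowhere vanishing. By the definition of a singularity identifier, what must be checked is precisely that $\lambda$ factors through $\hat{\lambda}$ and that $\hat{\lambda}^{-1}(0)=S(F_{1})$; the second condition is exactly Proposition~\ref{prop:singular-set1}, so the real content is the factorization together with the nonvanishing of $g$.

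First I would determine the unit normal $\tilde{\nu}$. Because $F_{1}=\varpi\circ H_{1}^{-1}$ is the projection of the Legendrian submanifold $S_{1}\subset J^{1}(2,1)=PT^{\ast}J^{0}(2,1)$, its contact element at each point is the hyperplane in $TJ^{0}(2,1)$ annihilated by $\omega_{0}=dz-p\,dx-q\,dy$. Reading $F_{1}(s,t)=(s,\widetilde{\varphi_{1}},\widetilde{\psi_{1}})$ as the $(x,y,z)$-coordinates of $J^{0}(2,1)$, a conormal to this hyperplane is $(-p,-q,1)$, so along $S_{1}$
\[
\tilde{\nu}=\frac{1}{\sqrt{1+\widetilde{\overline{p}}^{2}+\widetilde{\overline{q}}^{2}}}\,\bigl(-\widetilde{\overline{p}},-\widetilde{\overline{q}},1\bigr),
\]
which is smooth across the singular set, as a wave-front normal must be.

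Next I would read $F_{1s}$ and $F_{1t}$ off the Jacobian (\ref{Jacobian-F1}) obtained from Lemma~\ref{lem:calc-phi-psi-1}. The key structural observation is that the entire $t$-column is a scalar multiple of the fixed vector $\mathbf{w}:=(0,1,\widetilde{\overline{q}})$; indeed, using $\widetilde{\overline{\rho q}}=\widetilde{\overline{\rho}}\,\widetilde{\overline{q}}$ one obtains $F_{1t}=-\widetilde{\overline{\rho}}\,\hat{\lambda}\,\mathbf{w}$. Pulling the scalar $-\widetilde{\overline{\rho}}\,\hat{\lambda}$ out of the determinant gives $\lambda=-\widetilde{\overline{\rho}}\,\hat{\lambda}\,\det(F_{1s},\mathbf{w},\tilde{\nu})$, so the proposed nonvanishing factor is $g:=-\widetilde{\overline{\rho}}\,\det(F_{1s},\mathbf{w},\tilde{\nu})$.

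Finally I would verify that $g$ never vanishes. The factor $\widetilde{\overline{\rho}}=1/(1+\widetilde{\overline{x(\alpha_{y}+q\alpha_{z})}})$ is already known to be nowhere zero (cf.\ the proof of Proposition~\ref{prop:singular-set1}), so it remains to evaluate $\det(F_{1s},\mathbf{w},\tilde{\nu})$. Substituting $F_{1s}=(1,-\widetilde{\overline{\alpha}},\widetilde{\overline{p-\alpha q}})$ with $\widetilde{\overline{p-\alpha q}}=\widetilde{\overline{p}}-\widetilde{\overline{\alpha}}\,\widetilde{\overline{q}}$, the $\widetilde{\overline{\alpha}}$-terms cancel and the $3\times3$ determinant of $(F_{1s},\mathbf{w},(-\widetilde{\overline{p}},-\widetilde{\overline{q}},1))$ collapses to $1+\widetilde{\overline{p}}^{2}+\widetilde{\overline{q}}^{2}$; dividing by the normalization of $\tilde{\nu}$ yields $\det(F_{1s},\mathbf{w},\tilde{\nu})=\sqrt{1+\widetilde{\overline{p}}^{2}+\widetilde{\overline{q}}^{2}}>0$. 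Hence $g=-\widetilde{\overline{\rho}}\sqrt{1+\widetilde{\overline{p}}^{2}+\widetilde{\overline{q}}^{2}}$ is nowhere zero, and combining with Proposition~\ref{prop:singular-set1} shows that $\hat{\lambda}$ is the singularity identifier. I expect the only genuine obstacle to be the correct extraction of $\tilde{\nu}$ from the contact structure and the confirmation of its smoothness at singular points; once the multiple-of-$\mathbf{w}$ structure of $F_{1t}$ is noticed, the remaining determinant is a routine cancellation.
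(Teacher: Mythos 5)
Your proposal is correct and follows essentially the same route as the paper: factor $F_{1t}=-\widetilde{\overline{\rho}}\,\hat{\lambda}\,{}^{t}(0,1,\widetilde{\overline{q}})$ out of $\lambda=\det(F_{1s},F_{1t},\tilde{\nu})$, evaluate the remaining $3\times 3$ determinant to $\sqrt{1+\widetilde{\overline{p}}^{2}+\widetilde{\overline{q}}^{2}}$ after the $\widetilde{\overline{\alpha}}$-cancellation, and combine the nonvanishing of $-\widetilde{\overline{\rho}}\sqrt{1+\widetilde{\overline{p}}^{2}+\widetilde{\overline{q}}^{2}}$ with Proposition~\ref{prop:singular-set1}. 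The only cosmetic difference is that you obtain $\tilde{\nu}$ from the contact structure (which also gives its smoothness across $S(F_{1})$ for free), whereas the paper reads it off the cross product $(F_{1})_{s}\times(F_{1})_{t}$; the computation is otherwise identical.
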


\begin{proof}
	By (\ref{Jacobian-F1}), we have
	\[
		(F_{1})_{s} \times (F_{1})_{t} = -\widetilde{\overline{\rho}}\left(\xi''(t) + \widetilde{\overline{x(\alpha_{q}+\alpha \alpha_{p})}}\right){}^{t}(-\widetilde{\overline{p}},-\widetilde{\overline{q}},1),
	\]
	and put 
	\[
		\tilde{\nu}(s,t):=\frac{1}{\sqrt{(\widetilde{\overline{p}})^{2}+(\widetilde{\overline{q}})^{2}+1}}{}^{t}(-\widetilde{\overline{p}},-\widetilde{\overline{q}},1).
	\]
	Then $\tilde{\nu}$ is a unit normal vector field of $F_{1}$, and one has
	\begin{flalign*}
		\lambda(s,t) &= \det ((F_{1})_{s},(F_{1})_{t},\tilde{\nu})\\
		&=-\frac{\widetilde{\overline{\rho}}\left(\xi''(t) + \widetilde{\overline{x(\alpha_{q}+\alpha \alpha_{p})}}\right)}{\sqrt{(\widetilde{\overline{p}})^{2}+(\widetilde{\overline{q}})^{2}+1}}
		\det \left(
			\begin{array}{ccc}
				1 & 0  & -\widetilde{\overline{p}} \\
				-\widetilde{\overline{\alpha}} & 1 & -\widetilde{\overline{q}} \\
				\widetilde{\overline{p-\alpha q}} & \widetilde{\overline{q}} & 1		\end{array}
			\right)\\
			&=-\widetilde{\overline{\rho}}\sqrt{(\widetilde{\overline{p}})^{2}+(\widetilde{\overline{q}})^{2}+1}\left(\xi''(t) + \widetilde{\overline{x(\alpha_{q}+\alpha \alpha_{p})}}\right).
	\end{flalign*}
	Since the function $\widetilde{\overline{\rho}}\sqrt{(\widetilde{\overline{p}})^{2}+(\widetilde{\overline{q}})^{2}+1}$ does not take the value 0 and Proposition~\ref{prop:singular-set1}, the singularity identifier is given by
	\[
		\hat{\lambda}(s,t):=\xi''(t) + \widetilde{\overline{x(\alpha_{q}+\alpha \alpha_{p})}},
	\]
	which completes the proof.
\end{proof}

\begin{lem}\label{lem:degenerate-1}
	 The point $(s_{0},t_{0}) \in S(F_{1})$ is non-degenerate if and only if the following holds:
	\[
				\left(\widetilde{\overline{x(\alpha_{q}+\alpha \alpha_{p})}}\right)_{s} (s_{0},t_{0})\ne 0, \hspace{3mm}\text{or}\hspace{3mm}
				\xi'''(t_{0})+\left(\widetilde{\overline{x(\alpha_{q}+\alpha \alpha_{p})}}\right)_{t}(s_{0},t_{0}) \ne 0.
	\]
\end{lem}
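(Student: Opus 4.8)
The plan is to reduce the statement directly to the definition of a non-degenerate singular point, computing the differential of the singularity identifier in the coordinate chart $(s,t)$ furnished by $H_1$ on $S_1$.

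First I would recall from the Definition preceding this lemma that a singular point $(s_0,t_0)\in S(F_1)$ is non-degenerate precisely when $d\hat{\lambda}_{(s_0,t_0)}\ne 0$, where $\hat{\lambda}$ is the singularity identifier. By Lemma~\ref{lem:singular-identifire}, this identifier is given explicitly by
\[
	\hat{\lambda}(s,t)=\xi''(t)+\widetilde{\overline{x(\alpha_{q}+\alpha\alpha_{p})}}(s,t).
\]
Since $(s,t)$ is a genuine local coordinate system on $S_1$ (it is the chart $H_1$ from Notation~\ref{nota:tilde}), the one-form $d\hat{\lambda}$ is expressed in these coordinates as $d\hat{\lambda}=\hat{\lambda}_{s}\,ds+\hat{\lambda}_{t}\,dt$, so that $d\hat{\lambda}_{(s_0,t_0)}\ne 0$ holds if and only if at least one of the two partial derivatives $\hat{\lambda}_{s}(s_0,t_0)$, $\hat{\lambda}_{t}(s_0,t_0)$ is nonzero.

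The remaining step is to read off these two partial derivatives from the formula for $\hat{\lambda}$. The term $\xi''(t)$ depends on $t$ alone, so it contributes nothing to the $s$-derivative and contributes $\xi'''(t)$ to the $t$-derivative; the term $\widetilde{\overline{x(\alpha_{q}+\alpha\alpha_{p})}}$ depends on both coordinates. Hence
\[
	\hat{\lambda}_{s}=\left(\widetilde{\overline{x(\alpha_{q}+\alpha\alpha_{p})}}\right)_{s},\qquad
	\hat{\lambda}_{t}=\xi'''(t)+\left(\widetilde{\overline{x(\alpha_{q}+\alpha\alpha_{p})}}\right)_{t}.
\]
Substituting these into the equivalence from the previous paragraph yields exactly the claimed disjunction characterizing non-degeneracy, completing the argument.

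There is no serious obstacle here: the content of the lemma is essentially the unwinding of the definition $d\hat{\lambda}\ne 0$ together with the explicit identifier from Lemma~\ref{lem:singular-identifire}. The only point requiring a word of care is the justification that $d\hat{\lambda}$ vanishes if and only if both coordinate partials vanish, which relies on $(s,t)$ being an honest chart on $S_1$; this is guaranteed because $H_1$ was constructed as a diffeomorphism onto $I_1\times J_1$.
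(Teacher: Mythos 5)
Your proof is correct and follows essentially the same route as the paper: unwinding the definition of non-degeneracy as $d\hat{\lambda}_{(s_0,t_0)}\ne 0$, expanding $d\hat{\lambda}=\hat{\lambda}_{s}\,ds+\hat{\lambda}_{t}\,dt$ in the chart $(s,t)$, and substituting the explicit identifier from Lemma~\ref{lem:singular-identifire}. The paper's proof is even terser than yours (it does not write out the two partial derivatives), so your extra step of reading them off is a harmless elaboration, not a deviation.
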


\begin{proof}
	The singular point $(s_{0},t_{0})$ is non-degenerate if and only if
	\[
		(d\hat{\lambda})_{(s_{0},t_{0})}=\hat{\lambda}_{s}(s_{0},t_{0})(ds)_{(s_{0},t_{0})} + \hat{\lambda}_{t}(s_{0},t_{0})(dt)_{(s_{0},t_{0})} \ne 0
	\]
	Since the singularity identifier $\hat{\lambda}$ is defined by (\ref{sing-identifier-1}), this completes the proof. 
\end{proof}

We next show the criteria for the singularities of $F_{1}$.  

\begin{thm}\label{thm:singular-criterion1}
	If the point $(s_{0},t_{0}) \in S(F_{1})$ satisfies
	\[
		\xi'''(t_{0})+\left(\widetilde{\overline{x(\alpha_{q}+\alpha \alpha_{p})}}\right)_{t}(s_{0},t_{0})\ne 0,
	\]
	then $F_{1}$ is a cuspidal edge at $(s_{0},t_{0})$.
\end{thm}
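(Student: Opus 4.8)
The plan is to apply the cuspidal-edge criterion of Theorem~\ref{thm:singular-criterion}~(1), which reduces the statement to two things: that $(s_0,t_0)$ is a non-degenerate singular point of $F_1$, and that $\hat{\lambda}_{\eta}(s_0,t_0)\ne 0$ for the singular identifier $\hat{\lambda}$ and an extended null vector field $\tilde{\eta}$. Before invoking that criterion I would note that $F_1$ is indeed a wave front: since $F_1=\varpi\circ H_1^{-1}$ and $S_1$ is a Legendrian submanifold of $J^{1}(2,1)$, Proposition~\ref{prop:Legendre-wavefront} applies. The singular identifier is already computed in Lemma~\ref{lem:singular-identifire} as $\hat{\lambda}(s,t)=\xi''(t)+\widetilde{\overline{x(\alpha_{q}+\alpha\alpha_{p})}}$, so the whole argument becomes a matter of reading off the null direction and differentiating $\hat{\lambda}$ in that direction.

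First I would verify non-degeneracy. By Lemma~\ref{lem:degenerate-1}, the point $(s_0,t_0)\in S(F_1)$ is non-degenerate precisely when one of the two disjuncts holds, and the hypothesis of the theorem supplies exactly the second one, namely $\xi'''(t_0)+\bigl(\widetilde{\overline{x(\alpha_{q}+\alpha\alpha_{p})}}\bigr)_{t}(s_0,t_0)\ne 0$. Next I would identify the null vector field from the Jacobian \eqref{Jacobian-F1}: on $S(F_1)$ the defining equation $\xi''(t)+\widetilde{\overline{x(\alpha_{q}+\alpha\alpha_{p})}}=0$ forces the second column $(F_1)_t$ to vanish, while the first column $(F_1)_s$ remains nonzero because its first entry is $1$. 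Hence $dF_1(\partial/\partial t)=0$ along the singular curve, so $\eta=\partial/\partial t$ is a null vector field, and the constant field $\tilde{\eta}=\partial/\partial t$ on all of $I_1\times J_1$ serves as an extended null vector field.

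With $\tilde{\eta}=\partial/\partial t$ in hand, I would compute $\hat{\lambda}_{\eta}=d\hat{\lambda}(\tilde{\eta})=\hat{\lambda}_{t}$ directly from the explicit form of $\hat{\lambda}$ in Lemma~\ref{lem:singular-identifire}, obtaining $\hat{\lambda}_{t}(s,t)=\xi'''(t)+\bigl(\widetilde{\overline{x(\alpha_{q}+\alpha\alpha_{p})}}\bigr)_{t}$. The theorem's hypothesis is exactly the statement that this quantity is nonzero at $(s_0,t_0)$, so $\hat{\lambda}_{\eta}(s_0,t_0)\ne 0$, and Theorem~\ref{thm:singular-criterion}~(1) yields that $F_1$ is a cuspidal edge there. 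The only genuinely delicate point is the correct identification of the null and extended null vector fields from \eqref{Jacobian-F1}, together with the bookkeeping that confirms $\rank dF_1=1$ at the singular point; once $\tilde{\eta}=\partial/\partial t$ is justified, the verification of the cuspidal-edge condition is a single differentiation and a direct appeal to the hypothesis.
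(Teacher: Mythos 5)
Your proposal is correct and follows essentially the same route as the paper's own proof: non-degeneracy via Lemma~\ref{lem:degenerate-1} using the hypothesis as its second disjunct, identification of $\tilde{\eta}=\partial/\partial t$ as an extended null vector field from the vanishing of $(F_{1})_{t}$ on $S(F_{1})$ in \eqref{Jacobian-F1}, and then computing $\hat{\lambda}_{\eta}=\hat{\lambda}_{t}$ to invoke Theorem~\ref{thm:singular-criterion}~(1). Your extra remark that $F_{1}$ is a wave front (via Proposition~\ref{prop:Legendre-wavefront}) is a hypothesis the paper uses implicitly, so making it explicit is a minor improvement rather than a divergence.
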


\begin{proof}
	By Lemma~\ref{lem:degenerate-1}, the point $(s_{0},t_{0})$ is non-degenerate. In order to show this theorem, we use Theorem~\ref{thm:singular-criterion}~(1). We first define an extended null vector field. By Proposition~\ref{prop:singular-set1}, the point $(s,t) \in S(F_{1})$ satisfies
	\[
		\xi''(t) + \widetilde{\overline{x(\alpha_{q}+\alpha \alpha_{p})}}(s,t)=0.
	\]
	By (\ref{Jacobian-F1}), we have
	\begin{flalign}
		\notag dF_{1}|_{S(F_{1})} &=(F_{1})_{s} (ds)|_{S(F_{1})} + (F_{1})_{t}(dt)|_{S(F_{1})}\\
		\label{null-vector-1}&= {}^{t}(1,-\widetilde{\overline{\alpha}},\widetilde{\overline{p-\alpha q}})(ds)|_{S(F_{1})} +{}^{t}(0,0,0)(dt)|_{S(F_{1})}.
	\end{flalign}
	and define the vector field $\tilde{\eta}$ on $I_{1} \times J_{1}$ by
	\[
		\tilde{\eta}:=\DD/ \DD t.
	\]
	Then, by (\ref{null-vector-1}), the vector field that $\tilde{\eta}$ restricts to the singular curve is a null vector field. Hence, $\tilde{\eta}$ is an extended null vector field of $F_{1}$. This yields
	\[
		\hat{\lambda}_{\eta} =d\hat{\lambda}(\tilde{\eta})=\hat{\lambda}_{t}=\xi'''(t) + \left(\widetilde{\overline{x(\alpha_{q}+\alpha \alpha_{p})}}\right)_{t},
	\]
	and thus we have $\hat{\lambda}_{\eta}(s_{0},t_{0}) \ne0$. Hence, by Theorem~\ref{thm:singular-criterion} (1), the map $F_{1}$ is a cuspidal edge at $(s_{0},t_{0})$, which completes the proof. 
\end{proof}

\begin{thm}\label{thm:singularsol-criterion1}
	Assume that the singular point $(s_{0},t_{0}) \in \Sing (F_{1})$ satisfies
	\[
		\left(\widetilde{\overline{x(\alpha_{q}+\alpha \alpha_{p})}}\right)_{s} (s_{0},t_{0})\ne0.
	\]
	Then the following holds:
	\begin{enumerate}[\normalfont (1)]
		\setlength{\parskip}{1mm} % 段落間
  		\setlength{\itemsep}{2mm} % 項目間
		\item The function $F_{1}$ is a cuspidal edge at $(s_{0},t_{0})$ if and only if $(s_{0},t_{0})$ satisfies
		\[
			\xi'''(t_{0}) + \left(\widetilde{\overline{x(\alpha_{q}+\alpha \alpha_{p})}}\right)_{t}(s_{0},t_{0}) \ne 0.
		\]
		\item The function $F_{1}$ is a swallowtail at $(s_{0},t_{0})$ if and only if $(s_{0},t_{0})$ satisfies
			\begin{flalign*}
					\xi'''(t_{0}) + \left(\widetilde{\overline{x(\alpha_{q}+\alpha \alpha_{p})}}\right)_{t}(s_{0},t_{0}) &= 0, \\
					\xi^{(4)}(t_{0})+\left(\widetilde{\overline{x(\alpha_{q}+\alpha \alpha_{p})}}\right)_{tt}(s_{0},t_{0}) &\ne0.
			\end{flalign*}
		\item The function $F_{1}$ is a butterfly at $(s_{0},t_{0})$ if and only if $(s_{0},t_{0})$ satisfies
			\begin{flalign*}
					\xi'''(t_{0}) + \left(\widetilde{\overline{x(\alpha_{q}+\alpha \alpha_{p})}}\right)_{t}(s_{0},t_{0}) &= 0, \\
					\xi^{(4)}(t_{0})+\left(\widetilde{\overline{x(\alpha_{q}+\alpha \alpha_{p})}}\right)_{tt}(s_{0},t_{0}) &=0,\\
					\xi^{(5)}(t_{0})+\left(\widetilde{\overline{x(\alpha_{q}+\alpha \alpha_{p})}}\right)_{ttt}(s_{0},t_{0}) &\ne 0.
			\end{flalign*}
	\end{enumerate}
\end{thm}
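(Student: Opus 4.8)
The plan is to reduce the entire statement to the universal criteria of Theorem~\ref{thm:singular-criterion}, which detect the cuspidal edge, swallowtail and butterfly through the successive $\tilde\eta$-derivatives $\hat\lambda_\eta$, $\hat\lambda_{\eta\eta}$, $\hat\lambda_{\eta\eta\eta}$ of the singularity identifier. All the ingredients are already available from the preceding results, so the argument is essentially one of assembly rather than fresh computation.

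First I would record that the standing hypothesis $\left(\widetilde{\overline{x(\alpha_q+\alpha\alpha_p)}}\right)_s(s_0,t_0)\neq 0$ guarantees, through Lemma~\ref{lem:degenerate-1}, that $(s_0,t_0)$ is a \emph{non-degenerate} singular point of $F_1$; this is precisely the regularity hypothesis needed to invoke Theorem~\ref{thm:singular-criterion}. Next, as already established in the proof of Theorem~\ref{thm:singular-criterion1}, the coordinate field $\tilde\eta=\DD/\DD t$ is an extended null vector field along the singular curve. Because $\tilde\eta$ is the constant coordinate field $\DD/\DD t$, iterating the operator $d(\cdot)(\tilde\eta)$ simply produces pure $t$-derivatives, so differentiating the expression for $\hat\lambda$ in Lemma~\ref{lem:singular-identifire} yields
\begin{align*}
	\hat\lambda_\eta&=\xi'''(t)+\left(\widetilde{\overline{x(\alpha_q+\alpha\alpha_p)}}\right)_t,\\
	\hat\lambda_{\eta\eta}&=\xi^{(4)}(t)+\left(\widetilde{\overline{x(\alpha_q+\alpha\alpha_p)}}\right)_{tt},\\
	\hat\lambda_{\eta\eta\eta}&=\xi^{(5)}(t)+\left(\widetilde{\overline{x(\alpha_q+\alpha\alpha_p)}}\right)_{ttt}.
\end{align*}

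With these identities in hand, the three assertions become direct translations of the three cases of Theorem~\ref{thm:singular-criterion}. For (1), the cuspidal-edge condition $\hat\lambda_\eta(s_0,t_0)\neq 0$ is exactly $\xi'''(t_0)+\left(\widetilde{\overline{x(\alpha_q+\alpha\alpha_p)}}\right)_t(s_0,t_0)\neq 0$; for (2), the swallowtail condition $\hat\lambda_\eta(s_0,t_0)=0$, $\hat\lambda_{\eta\eta}(s_0,t_0)\neq 0$ reproduces the displayed pair; and for (3), the butterfly condition $\hat\lambda_\eta(s_0,t_0)=\hat\lambda_{\eta\eta}(s_0,t_0)=0$, $\hat\lambda_{\eta\eta\eta}(s_0,t_0)\neq 0$ reproduces the three displayed equations.

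The only point demanding a little care, rather than a genuine obstacle, is to confirm that the higher $\tilde\eta$-derivatives are read off correctly: since $\tilde\eta$ is the constant field $\DD/\DD t$ and not a general null field varying along the singular curve, the operators $\hat\lambda_{\eta\eta}$ and $\hat\lambda_{\eta\eta\eta}$ coincide globally with the ordinary partials $\hat\lambda_{tt}$ and $\hat\lambda_{ttt}$, so no correction terms arising from differentiating the direction field appear. Once this observation is made, each criterion of Theorem~\ref{thm:singular-criterion} matches the corresponding displayed condition verbatim, and the result follows.
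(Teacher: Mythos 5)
Your proposal is correct and follows essentially the same route as the paper's own proof: non-degeneracy from Lemma~\ref{lem:degenerate-1} under the standing hypothesis, the extended null vector field $\tilde{\eta}=\partial/\partial t$ carried over from the proof of Theorem~\ref{thm:singular-criterion1}, the identification of $\hat{\lambda}_{\eta}$, $\hat{\lambda}_{\eta\eta}$, $\hat{\lambda}_{\eta\eta\eta}$ with the pure $t$-derivatives of the singularity identifier from Lemma~\ref{lem:singular-identifire}, and a direct appeal to the three cases of Theorem~\ref{thm:singular-criterion}. Your explicit remark that no correction terms arise when iterating $d(\cdot)(\tilde{\eta})$ for a constant coordinate field is a point the paper leaves implicit in its ``direct calculations,'' but it is the same argument.
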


\begin{proof}
	In order to show this theorem, we use Theorem~\ref{thm:singular-criterion}. By Lemma~\ref{lem:degenerate-1}, the singular point $(s_{0},t_{0})$ is non-degenerate. Similar to the proof of Theorem~\ref{thm:singular-criterion1}, an extended null vector field $\tilde{\eta}$ is defined by
	\[
		\tilde{\eta}:=\DD/\DD t.
	\]
	Then, by direct calculations, we have
	\begin{flalign*}
		\hat{\lambda}_{\eta} &=d\hat{\lambda}(\tilde{\eta})=\hat{\lambda}_{t}=\xi'''(t) + \left(\widetilde{\overline{x(\alpha_{q}+\alpha \alpha_{p})}}\right)_{t}, \\
		\hat{\lambda}_{\eta \eta} &= d\hat{\lambda}_{\eta}(\tilde{\eta})=\hat{\lambda}_{tt}=\xi^{(4)}(t)+\left(\widetilde{\overline{x(\alpha_{q}+\alpha \alpha_{p})}}\right)_{tt},\\
					\hat{\lambda}_{\eta \eta \eta}&=d\hat{\lambda}_{\eta \eta}(\tilde{\eta})=\hat{\lambda}_{ttt}=\xi^{(5)}(t)+\left(\widetilde{\overline{x(\alpha_{q}+\alpha \alpha_{p})}}\right)_{ttt}.
	\end{flalign*}
	Hence, by Theorem~\ref{thm:singular-criterion}, our assertion is proved. 
\end{proof}

\begin{ex}\label{ex:nondegenerate-singularsol1}
	We consider the case of $\alpha=q$. Then, by Example~\ref{ex:structure-1}, the reduced EDS $\tilde{\calI}$ is written by
		\[
			\tilde{\calI}=\{dx_{2}+x_{1}dx_{4},dx_{3}-x_{4}dx_{4}\}_{\mathrm{diff}}.
		\]
		On the other hand, the locally diffeomorphism $\Phi^{-1}_{1} \colon V_{1} \to \Phi^{-1}_{1}(V_{1})$ is given by
		\[
			\Phi^{-1}_{1}(x_{0},x_{1},x_{2},x_{3},x_{4})=(x_{0},x_{1}-x_{4}x_{0},x_{2}+x_{3}x_{0}+x_{4}(x_{1}-x_{4}x_{0}),x_{3},x_{4}).
		\]
		Therefore, we have
	\begin{flalign*}
		\varphi_{1}(x_{0},x_{1},x_{2},x_{3},x_{4})&=x_{1}-x_{4}x_{0},\\
		\psi_{1}(x_{0},x_{1},x_{2},x_{3},x_{4})&=x_{2}+x_{3}x_{0}+x_{4}(x_{1}-x_{4}x_{0}).
	\end{flalign*}
	We next take an integral curve $C=\{(x_{1}(t),x_{2}(t),x_{3}(t),t)\ |\ t \in I\}$ of $\tilde{\calI}$. Then, by solving the differential equation $dx_{3}/dt-t=0$, we have the solution
	\[
		\mu(t)=(1/2)t^{2} +D, \hspace{3mm} \text{ where $D$ is a constant of integration}.
		\]
	We put $x_{2}(t)=t^{n}\ (n \geq 3)$. Then we obtain the following integral manifold:
	\[
		S_{1}=\{(s, t^{n}-st, -nt^{n-1}+s((1/2)t^{2}+D)+t(t^{n}-st),(1/2)t^{2} +D,t)\ |\ s \in I_{1},\ t \in J_{1}\}.
	\]
	Remark that the map $F_{1} \colon I_{1} \times J_{1} \to J^{0}(2,1)$ is given by
	\[
		F_{1}(s,t)={}^{t}(s, t^{n}-st, -nt^{n-1}+s((1/2)t^{2}+D)+t(t^{n}-st)).
	\]
	We next consider singular points of $F_{1}$. By a direct calculation, we have
	\[
		\xi''(t)+\widetilde{\overline{x(\alpha_{q}+\alpha \alpha_{p})}}=n(n-1)t^{n-2}+s.
	\]
	Therefore, we obtain $(s_{0},t_{0}):=(0,0) \in S(F_{1})$. Moreover, we obtain
	\[
		\left(
			\widetilde{\overline{x(\alpha_{q}+\alpha \alpha_{p})}}
		\right)_{s} = (s)_{s}=1 \ne 0,
	\]
	and thus, one has
	\begin{flalign*}
		\xi'''(t) + \left(\widetilde{\overline{x(\alpha_{q}+\alpha \alpha_{p})}}\right)_{t} &= \frac{n!}{(n-3)!}t^{n-3}, \\
					\xi^{(4)}(t_{0})+\left(\widetilde{\overline{x(\alpha_{q}+\alpha \alpha_{p})}}\right)_{tt}(s_{0},t_{0}) &=\frac{n!}{(n-4)!}t^{n-4},\\
					\xi^{(5)}(t_{0})+\left(\widetilde{\overline{x(\alpha_{q}+\alpha \alpha_{p})}}\right)_{ttt}(s_{0},t_{0}) & = \frac{n!}{(n-5)!}t^{n-5}.
	\end{flalign*}
	Then, by Theorem~\ref{thm:singularsol-criterion1}, if $n=3$ then $F_{1}$ is the cuspidal edge at $(0,0)$. Moreover, at the point $(0,0)$, the function $F_{1}$ is the swallowtail if $n=4$, and the butterfly if $n=5$.
\end{ex}

\begin{ex}
	We next give an example of a geometric singular solution of GMAE (\ref{ODS1}) which has a degenerate singular point. In this example,  we consider the case of $\alpha = p+q^{2}$. Then, by  Example~\ref{ex:structure-1}, we have
		\[
			\tilde{\calI}=\{dx_{2}+x_{1}dx_{4},dx_{3}-(x_{3}+x_{4}^{2})dx_{4}\}_{\mathrm{diff}}.
		\]
		On the other hand, the locally diffeomorphism $\Phi^{-1}_{1} \colon V_{1} \to \Phi^{-1}_{1}(V_{1})$ is given by
		\[
			\Phi^{-1}_{1}(x_{0},x_{1},x_{2},x_{3},x_{4})=(x_{0},x_{1}-(x_{3}+x_{4}^{2})x_{0},x_{2}+x_{3}x_{0}+x_{4}(x_{1}-(x_{3}+x_{4}^{2})x_{0}),x_{3},x_{4}).
		\]
		Therefore, we have
	\begin{flalign*}
		\varphi_{1}(x_{0},x_{1},x_{2},x_{3},x_{4})&=x_{1}-(x_{3}+x_{4}^{2})x_{0},\\
		\psi_{1}(x_{0},x_{1},x_{2},x_{3},x_{4})&=x_{2}+x_{3}x_{0}+x_{4}(x_{1}-(x_{3}+x_{4}^{2})x_{0}).
	\end{flalign*}
	We next take an integral curve $C=\{(x_{1}(t),x_{2}(t),x_{3}(t),t)\ |\ t \in I\}$ of $\tilde{\calI}$. Then, by solving the differential equation $dx_{3}/dt-x_{3}(t)-t^{2}=0$, we have the solution
	\[
		\mu(t)=De^{t}-t^{2}-2t-2,\hspace{3mm} \text{ where $D$ is a constant of integration.}
	\]
	For simplicity, we consider the $D=1$ case. Now we put $x_{2}(t):=(t-\log2)^{4}$. Then, by a direct calculation, we have
	\[
		\xi''(t)+\widetilde{\overline{x(\alpha_{q}+\alpha \alpha_{p})}}=12(t-\log 2)^{2} + s(e^{t}-2).
	\]
	This yields that $(s_{0},t_{0}):=(0,\log 2) \in S(F_{1})$. Moreover, we have
		\begin{flalign*}
				\left(\widetilde{\overline{x(\alpha_{q}+\alpha \alpha_{p})}}\right)_{s}(0,\log 2)&=e^{\log 2}-2=0,\\
				\xi'''+\left(\widetilde{\overline{x(\alpha_{q}+\alpha \alpha_{p})}}\right)_{t}(0,\log 2)&=24(\log2-\log 2)+0\cdot(e^{\log2}-2)=0.
	\end{flalign*}
	Therefore, by Lemma~\ref{lem:degenerate-1}, $(0,\log2)$ is degenerate. Moreover, by Lemma~\ref{lem:singular-identifire}, the singularity identifier $\hat{\lambda}(s,t)$ is given by
	\[
		\hat{\lambda}(s,t)=12(t-\log2)^{2}+s(e^{t}-2).
	\]
	Hence, we have
	\[
		\det H_{\hat{\lambda}}=\det \left(
			\begin{array}{cc}
				\hat{\lambda}_{ss}&\hat{\lambda}_{st} \\
				\hat{\lambda}_{ts}& \hat{\lambda}_{tt} \\
			\end{array}
		\right) = \det \left(
			\begin{array}{cc}
			0 & e^{t}\\
			e^{t} & 24+se^{t}
			\end{array}
		\right)=-e^{2t},
	\]
	and thus $\det H_{\hat{\lambda}}(0,\log2)=-4<0$. In this case as well, $\tilde{\eta}=\DD/\DD t$ is also an extended null vector field. Therefore, we have
	\[
		\hat{\lambda}_{\eta \eta}=\hat{\lambda}_{tt}=24+se^{t},
	\]
	and obtain $\hat{\lambda}_{\eta \eta}(0,\log2)=24 \ne0$. Hence, by Theorem~\ref{thm:singular-criterion-beaks}, the map $F_{1}$ is the beaks at $(0,\log2)$.
\end{ex}

\subsection{Case of non-generic type}
In this subsection, we construct geometric singular solutions of (\ref{ODS1}) which is of non-generic type. Then, by Proposition~\ref{prop:form-I/ChI-2}, the reduced EDS $\tilde{\calI}$ is written by
\[
	\tilde{\calI}=\{dx_{2}+\overline{\beta}dx_{4},dx_{3}-x_{1}dx_{4}\}_{\mathrm{diff}},\hspace{3mm}\text{where }\beta:=y+\alpha x.
\]
Now we construct geometric singular solution of (\ref{ODS1}). Let $C$ be an integral curve of $\tilde{\calI}$, and 
\[
	C=\{(x_{1}(t),x_{2}(t),x_{3}(t),x_{4}(t))\ |\ t \in I\}
\] 
be a local coordinate expression, where $I$ is some open interval in $\R$. In the same way as in the subsection 6.2, we assume that $x_{4}(t)=t$. Then, since $C$ is an integral curve of $\tilde{\calI}$, the functions $x_{i}(t)\ (i=1,2,3)$ must satisfy the following system of ordinary differential equations:
\begin{flalign}\label{ODE2}
	\left\{
	\renewcommand{\arraystretch}{2}
		\begin{array}{l}
			\displaystyle \frac{dx_{2}}{dt}+\overline{\beta}(x_{1}(t),x_{2}(t),x_{3}(t),t)=0 \\
			\displaystyle \frac{dx_{3}}{dt} -x_{1}(t) = 0.
		\end{array}
	\right.
	\renewcommand{\arraystretch}{1}
\end{flalign}
Let $\xi(t)$ be an arbitrary function, and we put $x_{3}:=\xi(t)$. By the second formula of (\ref{ODE2}), we have $x_{1}(t)=\xi'(t)$. Moreover, let $\mu(t)$ be a solution of the first formula of (\ref{ODE2}) under some initial conditions. Then we have the following proposition.
\begin{prop}\label{prop:singular-solution2}
	Let $\xi(t)$ be an arbitrary function, and $\eta(t)$ be a solution of the following differential equation:
	\begin{flalign}\label{ODE-eta2}
		\frac{dx_{2}}{dt}+\overline{\beta}(\xi'(t),x_{2}(t),\xi(t),t)=0.
	\end{flalign}
	Then the following manifold
	\begin{flalign*}
		&S_{2}:=\{
		(s,\varphi_{2}(s,\xi'(t),\mu(t),\xi(t),t),\\
		&\hspace{30mm}\psi_{2}(s,\xi'(t),\mu(t),\xi(t),t),\xi(t),t)\ |\ s\in I_{2},t \in J_{2}
		\} \subset J^{1}(2,1)
	\end{flalign*}
	is a two-dimensional integral manifold of $\calI$. Here $I_{2}$ and $J_{2}$ are some open intervals in $\R$.
\end{prop}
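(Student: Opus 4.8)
The plan is to argue exactly as in the generic case of Proposition~\ref{prop:singular-solution1}: the manifold $S_2$ is the $\pi$-preimage of an integral curve of the reduced system $\tilde{\calI}$, and such an integral curve lifts back to an integral manifold of $\calI$ along the Cauchy-characteristic fibration by virtue of Theorem~\ref{thm:reduction-EDS}.

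First I would record the underlying curve. In the coordinates $(x_1,x_2,x_3,x_4)$ of $J^{1}(2,1)/\Ch(\calI)$, set $C:=\{(\xi'(t),\mu(t),\xi(t),t)\mid t\in J_2\}\subset U_2$, where $\mu$ solves (\ref{ODE-eta2}). By Proposition~\ref{prop:form-I/ChI-2} the corresponding subbundle of $\tilde{\calI}$ is spanned by $\xi_1:=dx_2+\overline{\beta}\,dx_4$ and $\xi_2:=dx_3-x_1\,dx_4$. Along $C$ one has $dx_4=dt$, so the choice $x_1=\xi'(t)$ makes $dx_3-x_1\,dx_4=(\xi'-\xi')\,dt=0$, while $\mu$ solving (\ref{ODE-eta2}) gives $dx_2+\overline{\beta}\,dx_4=(\mu'+\overline{\beta})\,dt=0$. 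Hence $C$ is an integral curve of $\tilde{\calI}$. Note that this step relies on Lemma~\ref{lem:beta-function}, which guarantees that $\overline{\beta}$ is a genuine function on $U_2$ and therefore that (\ref{ODE-eta2}) is a well-posed ordinary differential equation on $C$.

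Next I would identify $S_2$ with $\pi^{-1}(C)$. Since $\pi=\pr\circ\Phi_2$ and $\Phi_2^{-1}$ is the local diffeomorphism whose second and third components are $\varphi_2$ and $\psi_2$, the set $S_2$ is precisely $\Phi_2^{-1}(I_2\times C)$, the fiber coordinate being $x_0=s$. Because $\Phi_2^{-1}$ is a diffeomorphism and the parametrization $(s,t)\mapsto(s,\xi'(t),\mu(t),\xi(t),t)$ of $I_2\times C$ is an immersion (its $s$- and $t$-derivatives carry the independent directions $\partial/\partial x_0$ and $\partial/\partial x_4$), the image $S_2$ is a two-dimensional submanifold of $J^{1}(2,1)$.

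Finally, to see that $S_2$ is integral I would combine Theorem~\ref{thm:reduction-EDS} with Proposition~\ref{prop:form-I/ChI-2}, which yield $\calI=\{\pi^{\ast}\xi_1,\pi^{\ast}\xi_2\}_{\mathrm{diff}}$. Writing $\iota\colon S_2\hookrightarrow J^{1}(2,1)$ for the inclusion, the composite $\pi\circ\iota$ factors through $C$, so $\iota^{\ast}\pi^{\ast}\xi_j=(\pi|_{S_2})^{\ast}(\xi_j|_{C})=0$ for $j=1,2$ by the first step; the pullbacks of the remaining generators $d(\pi^{\ast}\xi_j)$ vanish automatically because $\iota^{\ast}$ commutes with $d$. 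Thus $\iota^{\ast}\calI=\{0\}$, and $S_2$ is a two-dimensional integral manifold of $\calI$. The only genuinely substantive point is the descent of $\overline{\beta}$ to the quotient, already established in Lemma~\ref{lem:beta-function}; the remainder is the routine pullback computation mirroring the generic construction, so I expect no real obstacle here.
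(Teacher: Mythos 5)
Your proposal is correct and follows essentially the same route as the paper: the paper states Proposition~\ref{prop:singular-solution2} as an immediate consequence of the discussion preceding it (the curve $(\xi'(t),\mu(t),\xi(t),t)$ solves (\ref{ODE2}) and is hence an integral curve of $\tilde{\calI}$, and $\calI=\{\pi^{\ast}\xi_{1},\pi^{\ast}\xi_{2}\}_{\mathrm{diff}}$ by Proposition~\ref{prop:form-I/ChI-2}), which is exactly the method of Cauchy characteristics that you spell out. The details you add --- the immersion check for $(s,t)\mapsto(s,\xi'(t),\mu(t),\xi(t),t)$, the factorization of $\pi\circ\iota$ through $C$, and the remark that $\iota^{\ast}$ commutes with $d$ so the differentials of the generators also pull back to zero --- are precisely the routine verifications the paper leaves implicit, and your appeal to Lemma~\ref{lem:beta-function} for the well-definedness of $\overline{\beta}$ on $U_{2}$ is the right supporting ingredient.
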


We recall that $\varphi_{2}$ and $\psi_{2}$ are second and third components of $\Phi^{-1}_{2}$, respectively. Hereafter, we consider singular points of the projection $S_{1}$ to $J^{0}(2,1)$. Here, we define the map $H_{1}\colon S_{2} \to I_{2} \times J_{2}$ by
\[
	H_{2}(s,\varphi_{2}(s,\xi'(t),\mu(t),\xi(t),t),\psi_{2}(s,\xi'(t),\mu(t),\xi(t),t),\xi(t),t) :=(s,t).
\]
Namely, the map $H_{2}$ is a local coordinate system of $S_{2}$.

\begin{nota}
	We define the following notation in the same way as in the previous subsection: let $f$ be a function defined on $V_{2}(\subset \R \times J^{1}(2,1)/\Ch(\calI))$. Then we put
	\[
		\widetilde{f}:=f \circ \Phi_{2}|_{S_{2}} \circ H_{2}^{-1} \colon I_{2} \times J_{2} \to \R.
	\]
	Note that we use the same notation in Notation~\ref{nota:tilde} without fear of confusion.
\end{nota}

\begin{rem}
	Let $g$ be a function defined on $\Phi^{-1}_{2}(V_{2}) (\subset J^{1}(2,1))$. In Section~4, we defined the notation $\overline{g}$. Here, we remark that the following holds:
	\begin{flalign*}
		\widetilde{\overline{g}}=\overline{g} \circ \Phi_{2}|_{S_{2}} \circ H_{2}^{-1}= g \circ \Phi^{-1}_{2} \circ \Phi_{2}|_{S_{2}} \circ H_{2}^{-1} =g|_{S_{2}} \circ H^{-1}_{2}.
	\end{flalign*}
\end{rem}

As in the previous subsection, we have the following lemma.

\begin{lem}
	 Let $f_{1}$ and $f_{2}$ be functions defined on $V_{2}$. Then the following holds:
	\begin{flalign}\label{property-bar1}
		\widetilde{f_{1}+f_{2}}=\widetilde{f_{1}}+\widetilde{f_{2}},\hspace{5mm}\widetilde{f_{1}f_{2}}=\widetilde{f_{1}}\widetilde{f_{2}}.
	\end{flalign}
\end{lem}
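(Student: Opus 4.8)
The plan is to reduce both identities to the elementary fact that precomposition by a single fixed map distributes over pointwise algebraic operations. First I would observe that, by the very definition of the tilde-operation, the assignment $f \mapsto \widetilde{f}$ is precomposition by the one fixed map
\[
	g := \Phi_{2}|_{S_{2}} \circ H_{2}^{-1} \colon I_{2} \times J_{2} \to V_{2},
\]
which does not depend on the function $f$ being transported. Thus $\widetilde{f} = f \circ g$ for every function $f$ defined on $V_{2}$, and the entire content of the lemma is simply that composing on the right by $g$ respects sums and products of real-valued functions.

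Then I would verify the two identities pointwise. For an arbitrary point $(s,t) \in I_{2} \times J_{2}$, writing $g(s,t)$ for its image in $V_{2}$, the definition of pointwise addition gives
\[
	\widetilde{f_{1}+f_{2}}(s,t) = (f_{1}+f_{2})(g(s,t)) = f_{1}(g(s,t)) + f_{2}(g(s,t)) = \widetilde{f_{1}}(s,t) + \widetilde{f_{2}}(s,t),
\]
and the definition of pointwise multiplication gives
\[
	\widetilde{f_{1}f_{2}}(s,t) = (f_{1}f_{2})(g(s,t)) = f_{1}(g(s,t))\, f_{2}(g(s,t)) = \widetilde{f_{1}}(s,t)\, \widetilde{f_{2}}(s,t).
\]
Since $(s,t)$ is arbitrary, the two displayed equalities of (\ref{property-bar1}) follow at once.

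I do not anticipate any genuine obstacle here: this is word for word the analogue of the property already recorded for the bar-notation in Lemma~\ref{lem:property-bar2}, and its proof is purely formal. The only point worth making explicit is that the tilde-operation is governed \emph{solely} by the fixed composition $\Phi_{2}|_{S_{2}} \circ H_{2}^{-1}$, so that the same single map $g$ is applied to $f_{1}$, to $f_{2}$, and to their sum and product; once this is noted, the statement is immediate and, as in the earlier case, one may simply remark that it is a property of the composition of mappings.
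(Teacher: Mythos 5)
Your proof is correct and matches the paper's approach: the paper states this lemma without further argument (``As in the previous subsection''), the previous instance being justified by exactly your observation that the tilde-operation is precomposition by the single fixed map $\Phi_{2}|_{S_{2}} \circ H_{2}^{-1}$, so the identities are properties of composition of mappings. Your pointwise verification merely spells out what the paper treats as obvious, so there is nothing to add.
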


Under the above setting, now we study the singular points of $\varpi \colon S_{2}\to J^{0}(2,1)$, where $S_{2}$ is the integral manifold of the GMAS $\calI$ defined in Proposition~\ref{prop:singular-solution2}. Here we define $F_{2} \colon I_{2}\times J_{2} \to J^{0}(2,1)$ by
\[
	F_{2}(s,t):=(s,\widetilde{\varphi_{2}}(s,t),\widetilde{\psi_{2}}(s,t)).
\]
The map $F_{2}$ is called \textit{the map associated with $S_{2}$}. One can easily see that $F_{1}=\varpi \circ H^{-1}_{2}$. As in the previous subsection, the study of singularity of $\varpi \colon S_{2} \to J^{0}(2,1)$ coincides with the study of singularity of $F_{2} \colon I_{2} \times J_{2} \to J^{0}(2,1)$ associated with $S_{2}$.

In order to calculate the Jacobian of $F_{2}$, we consider the partial derivatives of $\widetilde{\varphi_{2}}$ and $\widetilde{\psi_{2}}$ by the coordinate $(s,t)$ on $I_{2} \times J_{2}$.

\begin{lem}\label{lem:calc-phi-psi-2}
	For the functions $\widetilde{\varphi_{2}}$ and $\widetilde{\psi_{2}}$, the following holds
	
	\vspace{2mm}	
	\noindent
	$
		(1)\ (\widetilde{\varphi_{2}})_{s}=\widetilde{\overline{x(\alpha_{x}+p\alpha_{z}}}),
		\hspace{13.5mm}
		(2)\ (\widetilde{\varphi_{2}})_{t}=\widetilde{\overline{x}}\left(\widetilde{\overline{\alpha_{q}+\alpha \alpha_{p}}}-\xi''(t)\right),
	$
	
	\vspace{2mm}
	
	\noindent
	$
		(3)\ (\widetilde{\psi_{2}})_{s}=\widetilde{\overline{p+qx(\alpha_{x}+p\alpha_{z})}}, 
		\hspace{6mm}
		(4)\ (\widetilde{\psi_{2}})_{t} = \widetilde{\overline{qx}}\left(\widetilde{\overline{\alpha_{q}+\alpha \alpha_{p}}}-\xi''(t)\right).
	$
\end{lem}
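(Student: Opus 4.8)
The plan is to mimic the computation of Lemma~\ref{lem:calc-phi-psi-1} verbatim, now using the non-generic data. I would start from the explicit differentials (\ref{dphi2}) and (\ref{dpsi2}), which express $d\varphi_2$ and $d\psi_2$ as combinations of $dx_0,\dots,dx_4$ with bar-function coefficients. The parametrization of $S_2$ supplied by $H_2^{-1}$ reads $x_0=s$, $x_1=\xi'(t)$, $x_2=\mu(t)$, $x_3=\xi(t)$, $x_4=t$, and the only coordinate depending on $s$ is $x_0$. Hence the $s$-derivatives are immediate: applying the chain rule, $(\widetilde{\varphi_2})_s=\widetilde{(\varphi_2)_{x_0}}=\widetilde{\overline{x(\alpha_x+p\alpha_z)}}$ and $(\widetilde{\psi_2})_s=\widetilde{(\psi_2)_{x_0}}=\widetilde{\overline{p+qx(\alpha_x+p\alpha_z)}}$, giving (1) and (3) with no further work.

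The substance is in (2) and (4). For $(\widetilde{\varphi_2})_t$ I apply the chain rule, noting $(s)_t=0$, $(\xi'(t))_t=\xi''(t)$, $(\xi(t))_t=\xi'(t)$, $(t)_t=1$, and crucially $(\mu(t))_t=\mu'(t)=-\widetilde{\overline{\beta}}$ since $\mu$ solves (\ref{ODE-eta2}). Reading the coefficients off (\ref{dphi2}) this gives
\[
(\widetilde{\varphi_2})_t=-\widetilde{\overline{x}}\,\xi''(t)-\widetilde{\overline{x\alpha_z}}\,\widetilde{\overline{\beta}}+\widetilde{\overline{x(\alpha_p+x\alpha_z)}}\,\xi'(t)+\widetilde{\overline{x(\alpha_q+y\alpha_z)}}.
\]
The two facts that drive the simplification are that in non-generic coordinates $x_1=\alpha$, so on $S_2$ one has $\xi'(t)=\widetilde{\overline{\alpha}}$, and that $\beta=y+\alpha x$ forces $\widetilde{\overline{\beta}}=\widetilde{\overline{y}}+\widetilde{\overline{\alpha}}\,\widetilde{\overline{x}}$. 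Substituting these, expanding with the multiplicativity of $\widetilde{\overline{\,\cdot\,}}$ (Lemma~\ref{lem:property-bar2}), and tracking terms, the pieces $\widetilde{\overline{xy\alpha_z}}$ cancel against each other and the pieces $\widetilde{\overline{x^2\alpha\alpha_z}}$ cancel against each other, leaving exactly $\widetilde{\overline{x\alpha\alpha_p}}+\widetilde{\overline{x\alpha_q}}=\widetilde{\overline{x}}\,\widetilde{\overline{\alpha_q+\alpha\alpha_p}}$. Thus $(\widetilde{\varphi_2})_t=\widetilde{\overline{x}}\big(\widetilde{\overline{\alpha_q+\alpha\alpha_p}}-\xi''(t)\big)$, which is (2). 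Case (4) runs identically: reading coefficients off (\ref{dpsi2}), making the same substitutions, the terms carrying $\widetilde{\overline{y}}$, $\widetilde{\overline{\alpha x}}$, $\widetilde{\overline{qxy\alpha_z}}$ and $\widetilde{\overline{qx^2\alpha\alpha_z}}$ all cancel in pairs, leaving $\widetilde{\overline{qx\alpha\alpha_p}}+\widetilde{\overline{qx\alpha_q}}=\widetilde{\overline{qx}}\,\widetilde{\overline{\alpha_q+\alpha\alpha_p}}$, so that $(\widetilde{\psi_2})_t=\widetilde{\overline{qx}}\big(\widetilde{\overline{\alpha_q+\alpha\alpha_p}}-\xi''(t)\big)$.

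Following the paper's style I would prove (1) and (2) in detail and remark that (3) and (4) follow by the same calculation. The only real obstacle is bookkeeping: the expression for the $t$-derivatives has many monomials in the bar functions, and the result emerges only after the $\alpha_z$-weighted and $y$-weighted terms cancel exactly. The reason these cancellations are forced, rather than accidental, is the identity $\widetilde{\overline{\beta}}=\widetilde{\overline{y}}+\widetilde{\overline{\alpha}}\,\widetilde{\overline{x}}$ together with $\xi'(t)=\widetilde{\overline{\alpha}}$, which pairs the contribution of $dx_2$ (via $\mu'=-\widetilde{\overline{\beta}}$) against the contributions of $dx_3$ and $dx_4$; verifying this cancellation carefully is the one step that genuinely needs to be carried out rather than asserted.
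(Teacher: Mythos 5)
Your proposal is correct and follows the paper's own proof essentially verbatim: the paper likewise reads the coefficients off (\ref{dphi2}) and (\ref{dpsi2}), applies the chain rule with $\mu'(t)=-\widetilde{\overline{\beta}}$ from (\ref{ODE-eta2}), and simplifies via $\beta=y+\alpha x$ together with $\xi'(t)=\widetilde{\overline{\alpha}}$ (used implicitly in its last displayed step), producing exactly the pairwise cancellations of the $\alpha_{z}$-weighted and $y$-weighted terms that you identify. Like you, the paper proves only (1) and (2) in detail and notes that (3) and (4) follow by the same computation.
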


\begin{proof}
	We have only to show (1) and (2), since one can show other cases, similarly. First of all, we remark that
	\begin{flalign*}
		\widetilde{\varphi_{2}}(s,t)=\varphi_{1}(s,\xi'(t),\mu(t),\xi(t),t),\hspace{4mm}
		\widetilde{\psi_{2}}(s,t)=\psi_{1}(s,\xi'(t),\mu(t),\xi(t),t).
	\end{flalign*}
	Moreover, since $\mu(t)$ is a solution of (\ref{ODE-eta2}), we have
	\[
		\mu'(t) = -\overline{\beta}(\xi'(t),\mu(t),\xi(t),t)=-\widetilde{\overline{y+\alpha x}}.
	\]
	Then, by direct calculations, one has
	\begin{flalign*}
		(\widetilde{\varphi_{2}})_{s}&=\widetilde{(\varphi_{2})_{x_{0}}}(s)_{s}+\widetilde{(\varphi_{2})_{x_{1}}}(\xi'(t))_{s}+\widetilde{(\varphi_{2})_{x_{2}}}(\mu(t))_{s}+\widetilde{(\varphi_{2})_{x_{3}}}(\xi(t))_{s} + \widetilde{(\varphi_{2})_{4}}(t)_{s}\\
		&=\widetilde{\overline{x(\alpha_{x}+p\alpha_{z})}},\\
		(\widetilde{\varphi_{2}})_{t}&=\widetilde{(\varphi_{2})_{x_{0}}}(s)_{t}+\widetilde{(\varphi_{2})_{x_{1}}}(\xi'(t))_{t}+\widetilde{(\varphi_{2})_{x_{2}}}(\mu(t))_{t}+\widetilde{(\varphi_{2})_{x_{3}}}(\xi(t))_{t} + \widetilde{(\varphi_{2})_{4}}(t)_{t}\\
		&=-\widetilde{\overline{x}}\xi''(t)+\widetilde{\overline{x\alpha_{z}}}\mu'(t) + \widetilde{\overline{x(\alpha_{p}+x\alpha_{z})}}\xi'(t)+\widetilde{\overline{x(\alpha_{q}+y\alpha_{z})}}\\
		&=-\widetilde{\overline{x}}\xi''(t)-\widetilde{\overline{x\alpha_{z}(y+\alpha x)}} + \widetilde{\overline{x(\alpha_{p}+x\alpha_{z})}}\xi'(t)+\widetilde{\overline{x(\alpha_{q}+y\alpha_{z})}}\\
		&=\widetilde{\overline{x}}\left(\widetilde{\overline{\alpha_{q}+\alpha \alpha_{p}}}-\xi''(t)\right),
	\end{flalign*}
which completes the proof.
\end{proof}

\begin{prop}\label{prop:singular-set2}
	For the map $F_{2}$ associated with $S_{2}$, the following holds:
	\[
		S(F_{2}) = \{(s,t)\in I_{2} \times J_{2}\ |\ \widetilde{\overline{\alpha_{q}+\alpha \alpha_{p}}}(s,t)-\xi''(t)=0\}.
	\]
\end{prop}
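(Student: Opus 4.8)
The plan is to mirror the proof of Proposition~\ref{prop:singular-set1}: I would assemble the Jacobian of $F_2$ from Lemma~\ref{lem:calc-phi-psi-2}, observe that its second column carries a single common scalar factor, and read off the singular set from the rank. Since $F_2(s,t)=(s,\widetilde{\varphi_2}(s,t),\widetilde{\psi_2}(s,t))$, the four formulas of Lemma~\ref{lem:calc-phi-psi-2} assemble into the $3\times2$ matrix
\[
	((F_2)_s\ (F_2)_t)=
	\begin{pmatrix}
		1 & 0\\
		\widetilde{\overline{x(\alpha_x+p\alpha_z)}} & \widetilde{\overline{x}}\,k\\
		\widetilde{\overline{p+qx(\alpha_x+p\alpha_z)}} & \widetilde{\overline{qx}}\,k
	\end{pmatrix},
	\qquad
	k:=\widetilde{\overline{\alpha_q+\alpha\alpha_p}}-\xi''(t).
\]

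Next I would exploit the structure of this matrix. The whole second column equals $k\cdot{}^{t}(0,\widetilde{\overline{x}},\widetilde{\overline{qx}})$, while the first column has leading entry $1$ and is therefore nowhere zero. Hence $(s,t)$ is a singular point of $F_2$ exactly when the two columns are linearly dependent, and the natural quantity to test is the $2\times2$ minor formed by the first two rows, namely $\det\!\left(\begin{smallmatrix}1&0\\ \widetilde{\overline{x(\alpha_x+p\alpha_z)}} & \widetilde{\overline{x}}\,k\end{smallmatrix}\right)=\widetilde{\overline{x}}\,k$. The key input is that $\widetilde{\overline{x}}$ is nowhere zero: throughout the non-generic case $\alpha$ obeys $1+x(\alpha_y+q\alpha_z)=0$, which forces $x\neq0$ (as already used in the proof of Lemma~\ref{lem:criterion-condition2}), so $\widetilde{\overline{x}}\neq0$ on $I_2\times J_2$.

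With this in hand the conclusion is immediate. If $k\neq0$ at $(s,t)$, the displayed minor $\widetilde{\overline{x}}\,k$ is nonzero, so $\rank dF_2=2$ and $(s,t)$ is a regular point; if $k=0$, the entire second column vanishes, the Jacobian has rank one, and $(s,t)$ is singular. Therefore $S(F_2)=\{(s,t)\mid k=0\}=\{(s,t)\mid \widetilde{\overline{\alpha_q+\alpha\alpha_p}}(s,t)-\xi''(t)=0\}$, which is the claim. I anticipate no genuine obstacle: beyond the bookkeeping already carried out in Lemma~\ref{lem:calc-phi-psi-2}, the only substantive ingredient is the non-vanishing of $\widetilde{\overline{x}}$, which is forced by the standing non-generic hypothesis, and the argument is the exact analogue of Proposition~\ref{prop:singular-set1} with $\widetilde{\overline{x}}$ playing the role that $\widetilde{\overline{\rho}}$ played there.
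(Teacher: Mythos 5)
Your proposal is correct and follows essentially the same route as the paper's proof: both assemble the Jacobian of $F_{2}$ from Lemma~\ref{lem:calc-phi-psi-2}, factor the scalar $\widetilde{\overline{\alpha_{q}+\alpha\alpha_{p}}}-\xi''(t)$ out of the second column, and invoke the non-generic relation $1+x(\alpha_{y}+q\alpha_{z})=0$ to conclude that $\widetilde{\overline{x}}$ never vanishes, so the rank drops exactly when that scalar is zero. Your explicit rank analysis via the $2\times2$ minor merely spells out what the paper leaves implicit.
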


\begin{proof}
	In order to prove this proposition, it is enough to calculate the Jacobian of $F_{2}$. By Lemma~\ref{lem:calc-phi-psi-2}, we have
	{\small
	\begin{flalign}\label{Jacobian-F2}
		\notag((F_{2})_{s}\ (F_{2})_{t})&=\left(
			\begin{array}{cc}
				(s)_{s} & (s)_{t} \\
				(\widetilde{\varphi_{2}})_{s} &(\widetilde{\varphi_{2}})_{t} \\
				(\widetilde{\psi_{2}})_{s} & (\widetilde{\psi_{2}})_{t}
			\end{array}	
		\right)\\
		&= \left(
			\begin{array}{cc}
				1 & 0 \\
				\widetilde{\overline{x(\alpha_{x}+p\alpha_{z}}}) & \widetilde{\overline{x}}\left(\widetilde{\overline{\alpha_{q}+\alpha \alpha_{p}}}-\xi''(t)\right) \\
			\widetilde{\overline{p+qx(\alpha_{x}+p\alpha_{z})}} & \widetilde{\overline{qx}}\left(\widetilde{\overline{\alpha_{q}+\alpha \alpha_{p}}}-\xi''(t)\right)
			\end{array}
		\right).
	\end{flalign}
	}
	
	\noindent
	Since the function $\alpha$ satisfies $1+x(\alpha_{y}+q\alpha_{z})=0$, we have $1+\widetilde{\overline{x}}(\widetilde{\overline{\alpha_{y}+q\alpha_{z}}}) =0$. Therefore, $\widetilde{\overline{x}}$ does not also take 0. Hence the point $(s,t)$ is a singular point of $F_{2}$ if and only if $(s,t)$ satisfies
	\[
	\widetilde{\overline{\alpha_{q}+\alpha \alpha_{p}}}(s,t)-\xi''(t)=0,
	\]
	which completes the proof.
\end{proof}

We next calculate the singular identifier of $F_{2}$

\begin{lem}\label{lem:singular-identifire2}
	The singular identifier $\hat{\lambda}$ of $F_{2}$ is given by the following: 
\begin{flalign}\label{sing-identifier-2}
	\hat{\lambda}(s,t):=\widetilde{\overline{\alpha_{q}+\alpha \alpha_{p}}}(s,t)-\xi''(t).
\end{flalign}
\end{lem}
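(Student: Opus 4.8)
The plan is to follow the strategy of Lemma~\ref{lem:singular-identifire} essentially verbatim, exploiting that $F_{2}$, like $F_{1}$, is the composition of a Legendrian piece with the Legendrian fibration $\varpi$, so that its unit normal is forced to be the contact normal $(-\widetilde{\overline{p}},-\widetilde{\overline{q}},1)$ up to scale. First I would start from the Jacobian computed in (\ref{Jacobian-F2}) and form the cross product $(F_{2})_{s}\times(F_{2})_{t}$. Writing $k:=\widetilde{\overline{\alpha_{q}+\alpha\alpha_{p}}}-\xi''(t)$, the second column of (\ref{Jacobian-F2}) equals $k$ times ${}^{t}(0,\widetilde{\overline{x}},\widetilde{\overline{qx}})$, so an overall factor $k$ comes out of the cross product immediately.

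The one genuinely algebraic step is to check that the surviving minors collapse. Using the multiplicativity $\widetilde{f_{1}f_{2}}=\widetilde{f_{1}}\,\widetilde{f_{2}}$, the top entry of $(F_{2})_{s}\times(F_{2})_{t}$, after removing the overall factor $k$, is $\widetilde{\overline{x(\alpha_{x}+p\alpha_{z})}}\,\widetilde{\overline{qx}}-\widetilde{\overline{p+qx(\alpha_{x}+p\alpha_{z})}}\,\widetilde{\overline{x}}$, in which the two $\widetilde{\overline{qx^{2}(\alpha_{x}+p\alpha_{z})}}$ contributions cancel, leaving $-\widetilde{\overline{px}}=-\widetilde{\overline{p}}\,\widetilde{\overline{x}}$; the remaining two entries are $-\widetilde{\overline{qx}}$ and $\widetilde{\overline{x}}$. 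Hence $(F_{2})_{s}\times(F_{2})_{t}=k\,\widetilde{\overline{x}}\,{}^{t}(-\widetilde{\overline{p}},-\widetilde{\overline{q}},1)$, exactly the contact normal scaled by $k\widetilde{\overline{x}}$. I would then set $\tilde{\nu}$ to be this direction normalized by $1/\sqrt{(\widetilde{\overline{p}})^{2}+(\widetilde{\overline{q}})^{2}+1}$, note that it is a unit normal field of $F_{2}$, and compute $\lambda=\det((F_{2})_{s},(F_{2})_{t},\tilde{\nu})=\bigl((F_{2})_{s}\times(F_{2})_{t}\bigr)\cdot\tilde{\nu}=k\,\widetilde{\overline{x}}\,\sqrt{(\widetilde{\overline{p}})^{2}+(\widetilde{\overline{q}})^{2}+1}$.

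Finally I would peel off the non-vanishing factor. The radical is everywhere $\ge 1$, and the non-generic hypothesis yields $1+\widetilde{\overline{x}}\,(\widetilde{\overline{\alpha_{y}+q\alpha_{z}}})=0$, so $\widetilde{\overline{x}}\ne0$ and is sign-definite; thus $\mu:=\widetilde{\overline{x}}\,\sqrt{(\widetilde{\overline{p}})^{2}+(\widetilde{\overline{q}})^{2}+1}$ is nowhere zero and $\lambda=\hat{\lambda}\,\mu$ with $\hat{\lambda}=\widetilde{\overline{\alpha_{q}+\alpha\alpha_{p}}}-\xi''(t)$. By Proposition~\ref{prop:singular-set2} the zero set of this $\hat{\lambda}$ is precisely $S(F_{2})$, so it is the singularity identifier, as claimed.

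The only non-routine point is the cancellation in the cross product: one has to trust that the contact structure of $J^{1}(2,1)$ survives the reduction so that the normal collapses to $(-\widetilde{\overline{p}},-\widetilde{\overline{q}},1)$, and that the extracted prefactor $\widetilde{\overline{x}}\sqrt{(\widetilde{\overline{p}})^{2}+(\widetilde{\overline{q}})^{2}+1}$ is genuinely nowhere zero (this is where the non-generic condition $\widetilde{\overline{x}}\ne0$ is used). Everything else is bookkeeping with the bar and tilde operations, parallel in shape to the generic case.
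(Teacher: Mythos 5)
Your proposal is correct and follows essentially the same route as the paper's own proof: compute $(F_{2})_{s}\times(F_{2})_{t}=\bigl(\widetilde{\overline{\alpha_{q}+\alpha\alpha_{p}}}-\xi''(t)\bigr)\,\widetilde{\overline{x}}\;{}^{t}(-\widetilde{\overline{p}},-\widetilde{\overline{q}},1)$, normalize to get $\tilde{\nu}$, obtain $\lambda=\widetilde{\overline{x}}\sqrt{(\widetilde{\overline{p}})^{2}+(\widetilde{\overline{q}})^{2}+1}\,\bigl(\widetilde{\overline{\alpha_{q}+\alpha\alpha_{p}}}-\xi''(t)\bigr)$, and peel off the nowhere-vanishing factor using $x\ne0$ from the non-generic condition together with Proposition~\ref{prop:singular-set2}. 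Your explicit cancellation of the $\widetilde{\overline{qx^{2}(\alpha_{x}+p\alpha_{z})}}$ terms in the top minor is exactly the computation implicit in the paper, so there is nothing to add.
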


\begin{proof}
	By direct calculations, we have
	\[
		(F_{2})_{s} \times (F_{2})_{t} = \widetilde{\overline{x}}\left(\widetilde{\overline{\alpha_{q}+\alpha \alpha_{p}}}-\xi''(t)\right){}^{t}(-\widetilde{\overline{p}},-\widetilde{\overline{q}},1),
	\]
	and put
	\[
		\tilde{\nu}=\frac{1}{\sqrt{(\widetilde{\overline{p}})^{2}+(\widetilde{\overline{q}})^{2}+1}}{}^{t}(-\widetilde{\overline{p}},-\widetilde{\overline{q}},1).
	\]
	Then $\tilde{\nu}$ is a unit normal vector field of $F_{2}$. Therefore, we have
	\begin{flalign*}
	\lambda(s,t) &= \det ((F_{2})_{s},(F_{2})_{t},\widetilde{\nu})\\
	&=\frac{\widetilde{\overline{x}}\left(\widetilde{\overline{\alpha_{q}+\alpha \alpha_{p}}}-\xi''(t)\right)}{\sqrt{(\widetilde{\overline{p}})^{2}+(\widetilde{\overline{q}})^{2}+1}}
	\det \left(\begin{array}{ccc}
		1 & 0  & -\widetilde{\overline{p}}\\
				\widetilde{\overline{x(\alpha_{x}+p\alpha_{z}}}) & 1 & -\widetilde{\overline{q}} \\
			\widetilde{\overline{p+qx(\alpha_{x}+p\alpha_{z})}} & t & 1
			\end{array}
	\right)\\
	&=\widetilde{\overline{x}}\sqrt{(\widetilde{\overline{p}})^{2}+(\widetilde{\overline{q}})^{2}+1}\left(\widetilde{\overline{\alpha_{q}+\alpha \alpha_{p}}}-\xi''(t)\right).
\end{flalign*}
	Since the function $\widetilde{\overline{x}}\sqrt{(\widetilde{\overline{p}})^{2}+(\widetilde{\overline{q}})^{2}+1}$ does not take the value 0 and Proposition~\ref{prop:singular-set2}, the singularity identifier is the following:
	\[
		\hat{\lambda}(s,t):=\widetilde{\overline{\alpha_{q}+\alpha \alpha_{p}}}-\xi''(t),
	\]
	which completes the proof.
\end{proof}

\begin{lem}\label{lem:degenerate-2}
	The point $(s_{0},t_{0}) \in S(F_{2})$ is non-degenerate if and only if the following holds:
	\[
		\left(\widetilde{\overline{\alpha_{q}+\alpha \alpha_{p}}}\right)_{s} (s_{0},t_{0})\ne 0, \hspace{3mm}\text{or}\hspace{3mm}
		\left(\widetilde{\overline{\alpha_{q}+\alpha \alpha_{p}}}\right)_{t} (s_{0},t_{0}) -\xi'''(t_{0})	 \ne 0.
	\]
\end{lem}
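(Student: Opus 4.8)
The plan is to run exactly the same argument as in the proof of Lemma~\ref{lem:degenerate-1}, now using the singular identifier already computed in Lemma~\ref{lem:singular-identifire2}. By the definition of a non-degenerate singular point, a point $(s_{0},t_{0}) \in S(F_{2})$ is non-degenerate precisely when $d\hat{\lambda}_{(s_{0},t_{0})} \ne 0$. Hence the whole lemma reduces to expanding the differential of $\hat{\lambda}$ in the natural coframe $\{ds,dt\}$ of $I_{2}\times J_{2}$ and reading off when this covector fails to vanish.

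First I would recall from Lemma~\ref{lem:singular-identifire2} that the singular identifier is
\[
	\hat{\lambda}(s,t) = \widetilde{\overline{\alpha_{q}+\alpha \alpha_{p}}}(s,t) - \xi''(t),
\]
and then write $d\hat{\lambda} = \hat{\lambda}_{s}\,ds + \hat{\lambda}_{t}\,dt$. Since the term $\xi''(t)$ depends on $t$ only, its $s$-derivative vanishes, so the two coefficients are
\[
	\hat{\lambda}_{s} = \left(\widetilde{\overline{\alpha_{q}+\alpha \alpha_{p}}}\right)_{s}, \qquad \hat{\lambda}_{t} = \left(\widetilde{\overline{\alpha_{q}+\alpha \alpha_{p}}}\right)_{t} - \xi'''(t).
\]
Because $\{ds,dt\}$ is a coframe, the covector $d\hat{\lambda}_{(s_{0},t_{0})}$ is nonzero if and only if at least one of the scalars $\hat{\lambda}_{s}(s_{0},t_{0})$ and $\hat{\lambda}_{t}(s_{0},t_{0})$ is nonzero, which is precisely the stated disjunction.

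I expect essentially no real obstacle here: the statement is a direct unwinding of the definition of non-degeneracy together with the explicit form of $\hat{\lambda}$, perfectly parallel to the generic case. The only point that deserves care is that the non-degeneracy condition is a disjunction (the differential is a nonzero covector, not both coefficients simultaneously nonzero), so I would make explicit that the two displayed inequalities are joined by \emph{or}. I would also note in passing that $\xi'''(t_{0})$ is well defined since $\xi$ is taken to be smooth, so differentiating $\xi''(t)$ in $t$ is legitimate.
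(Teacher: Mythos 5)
Your proposal is correct and follows exactly the paper's own argument: the paper likewise invokes the definition of non-degeneracy, writes $(d\hat{\lambda})_{(s_{0},t_{0})}=\hat{\lambda}_{s}(s_{0},t_{0})(ds)_{(s_{0},t_{0})}+\hat{\lambda}_{t}(s_{0},t_{0})(dt)_{(s_{0},t_{0})}\ne 0$, and reads off the two coefficients from the singular identifier of Lemma~\ref{lem:singular-identifire2}. Your extra remarks (that $\xi''(t)$ has vanishing $s$-derivative and that the condition is a disjunction) merely make explicit what the paper leaves implicit.
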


\begin{proof}
	The singular point $(s_{0},t_{0})$ is non-degenerate if and only if
	\[
		(d\hat{\lambda})_{(s_{0},t_{0})}=\hat{\lambda}_{s}(s_{0},t_{0})(ds)_{(s_{0},t_{0})} + \hat{\lambda}_{t}(s_{0},t_{0})(dt)_{(s_{0},t_{0})} \ne 0.
	\]
	Since the singularity identifier $\hat{\lambda}$ is defined by (\ref{sing-identifier-2}), this completes the proof.
\end{proof}

We next show the criteria for the singularities of $F_{2}$.  

\begin{thm}\label{thm:singularsol-criterion2}
	If the point $(s_{0},t_{0}) \in S (F_{2})$ satisfies
	\[
		\left(\widetilde{\overline{\alpha_{q}+\alpha \alpha_{p}}}\right)_{t} (s_{0},t_{0}) -\xi'''(t_{0})\ne 0,
	\]
	then $F_{2}$ is a cuspidal edge at $(s_{0},t_{0})$.
\end{thm}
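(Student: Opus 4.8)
The plan is to transcribe the argument of Theorem~\ref{thm:singular-criterion1} into the non-generic setting, feeding the criterion of Theorem~\ref{thm:singular-criterion}~(1) with the data already assembled for $F_{2}$. First I would invoke Lemma~\ref{lem:degenerate-2}: the hypothesis $\left(\widetilde{\overline{\alpha_{q}+\alpha \alpha_{p}}}\right)_{t}(s_{0},t_{0})-\xi'''(t_{0})\ne 0$ is precisely one of the two conditions guaranteeing that $(s_{0},t_{0})$ is a non-degenerate singular point. Hence the cuspidal-edge criterion $\hat{\lambda}_{\eta}(p)\ne 0$ of Theorem~\ref{thm:singular-criterion}~(1) is applicable, and it suffices to exhibit an extended null vector field and evaluate $\hat{\lambda}_{\eta}$ at $(s_{0},t_{0})$.

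Next I would read off an extended null vector field directly from the Jacobian (\ref{Jacobian-F2}). Its second column equals the scalar function $\widetilde{\overline{\alpha_{q}+\alpha \alpha_{p}}}-\xi''(t)$ times the vector ${}^{t}(0,\widetilde{\overline{x}},\widetilde{\overline{qx}})$, so by Proposition~\ref{prop:singular-set2} this column vanishes identically on $S(F_{2})$. Thus $(F_{2})_{t}=0$ along the singular curve, which shows that $\tilde{\eta}:=\DD/\DD t$ restricts to a null vector field on $S(F_{2})$ and is therefore an extended null vector field of $F_{2}$, exactly as in the generic case.

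Finally I would differentiate the singularity identifier in the null direction. By Lemma~\ref{lem:singular-identifire2} the singularity identifier is $\hat{\lambda}(s,t)=\widetilde{\overline{\alpha_{q}+\alpha \alpha_{p}}}(s,t)-\xi''(t)$, so
\[
	\hat{\lambda}_{\eta}=d\hat{\lambda}(\tilde{\eta})=\hat{\lambda}_{t}=\left(\widetilde{\overline{\alpha_{q}+\alpha \alpha_{p}}}\right)_{t}-\xi'''(t).
\]
By hypothesis $\hat{\lambda}_{\eta}(s_{0},t_{0})\ne 0$, and Theorem~\ref{thm:singular-criterion}~(1) then yields that $F_{2}$ is a cuspidal edge at $(s_{0},t_{0})$.

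I do not anticipate a genuine obstacle here: the entire content reduces to the observation that the singular locus of $F_{2}$ coincides with the zero set of the second column of its Jacobian, which pins the null direction to $\DD/\DD t$ and collapses $\hat{\lambda}_{\eta}$ to the single $t$-derivative of the singularity identifier. The only point requiring care is that the normalizing factor $\widetilde{\overline{x}}$ does not vanish, which is already guaranteed in the non-generic regime by the identity $1+\widetilde{\overline{x}}\,\widetilde{\overline{\alpha_{y}+q\alpha_{z}}}=0$ used in the proof of Proposition~\ref{prop:singular-set2}.
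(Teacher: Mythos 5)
Your proposal is correct and follows essentially the same route as the paper's own proof: non-degeneracy via Lemma~\ref{lem:degenerate-2}, the extended null vector field $\tilde{\eta}=\DD/\DD t$ extracted from the vanishing of the second column of the Jacobian (\ref{Jacobian-F2}) on $S(F_{2})$, and then $\hat{\lambda}_{\eta}=\hat{\lambda}_{t}$ fed into Theorem~\ref{thm:singular-criterion}~(1). Your closing remark that $\widetilde{\overline{x}}\ne 0$ (forced by $1+\widetilde{\overline{x}}\,\widetilde{\overline{\alpha_{y}+q\alpha_{z}}}=0$) is exactly the normalization point the paper handles inside Proposition~\ref{prop:singular-set2}, so nothing is missing.
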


\begin{proof}
		By Lemma~\ref{lem:degenerate-2}, the point $(s_{0},t_{0})$ is non-degenerate. In order to show this theorem, we use Theorem~\ref{thm:singular-criterion} (1). We first define an extended null vector field. 
		By Proposition~\ref{prop:singular-set2}, the point $(s,t) \in S(F_{2})$ satisfies
		\[
			\widetilde{\overline{\alpha_{q}+\alpha \alpha_{p}}}(s,t)-\xi''(t)=0.
		\]
		By (\ref{Jacobian-F2}), we have
	\begin{flalign*}
		dF_{2}|_{S(F_{2})} &=(F_{2})_{s} (ds)|_{S(F_{2})} + (F_{2})_{t}(dt)|_{S(F_{2})}\\
		&= {}^{t}(1,\widetilde{\overline{x(\alpha_{x}+p\alpha_{z}}}),\widetilde{\overline{p+qx(\alpha_{x}+p\alpha_{z})}})(ds)|_{S(F_{2})} +{}^{t}(0,0,0)(dt)|_{S(F_{2})}.
	\end{flalign*}
	Therefore, as in the previous subsection, the following vector field
	\[
		\tilde{\eta}:=\DD/\DD t
	\]
	is an extended null vector field of $F_{2}$. This yields that
	\[
		\hat{\lambda}_{\eta}=d\hat{\lambda}(\tilde{\eta})=\hat{\lambda}_{t}=\left(\widetilde{\overline{\alpha_{q}+\alpha \alpha_{p}}}\right)_{t} -\xi'''(t),
	\]
	and thus, we have $\hat{\lambda}_{\eta}(s_{0},t_{0}) \ne0$. Hence, by Theorem~\ref{thm:singular-criterion} (1), the map $F_{2}$ is a cuspidal edge at $(s_{0},t_{0})$, which completes the proof.
\end{proof} 

\begin{thm}
	Assume that $(s_{0},t_{0}) \in S(F_{2})$ satisfies 
	\[
		\left(\widetilde{\overline{\alpha_{q}+\alpha \alpha_{p}}}\right)_{s} (s_{0},t_{0})\ne 0.
	\]
	Then the following holds:
	\begin{enumerate}[\normalfont (1)]
		\setlength{\parskip}{1mm} % 段落間
  		\setlength{\itemsep}{2mm} % 項目間
		\setlength{\leftskip}{5mm}
		\item The map $F_{2}$ is a cuspidal edge at $(s_{0},t_{0})$ if and only if $(s_{0},t_{0})$ satisfies
		\begin{flalign*}
			\left(\widetilde{\overline{\alpha_{q}+\alpha \alpha_{p}}}\right)_{t} (s_{0},t_{0}) -\xi'''(t_{0})\ne 0.
		\end{flalign*}
		\item The map $F_{2}$ is a swallowtail at $(s_{0},t_{0})$ if and only if $(s_{0},t_{0})$ satisfies			\begin{flalign*}
				\left(\widetilde{\overline{\alpha_{q}+\alpha \alpha_{p}}}\right)_{t} (s_{0},t_{0}) -\xi'''(t_{0}) &= 0, \\
				\left(\widetilde{\overline{\alpha_{q}+\alpha \alpha_{p}}}\right)_{tt} (s_{0},t_{0}) -\xi^{(4)}(t_{0})&\ne 0.
			\end{flalign*}
		\item The map $F_{2}$ is a butterfly at $(s_{0},t_{0})$ if and only if $(s_{0},t_{0})$ satisfies
			\begin{flalign*}
				\left(\widetilde{\overline{\alpha_{q}+\alpha \alpha_{p}}}\right)_{t} (s_{0},t_{0}) -\xi'''(t_{0}) &= 0, \\
				\left(\widetilde{\overline{\alpha_{q}+\alpha \alpha_{p}}}\right)_{tt} (s_{0},t_{0}) -\xi^{(4)}(t_{0})&= 0,\\
\left(\widetilde{\overline{\alpha_{q}+\alpha \alpha_{p}}}\right)_{ttt} (s_{0},t_{0}) -\xi^{(5)}(t_{0})&\ne 0.
			\end{flalign*}
	\end{enumerate}
\end{thm}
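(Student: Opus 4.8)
The plan is to imitate the proof of Theorem~\ref{thm:singularsol-criterion1} from the generic case and to feed the data directly into the criteria of Theorem~\ref{thm:singular-criterion}. First I would use the standing hypothesis $\left(\widetilde{\overline{\alpha_{q}+\alpha \alpha_{p}}}\right)_{s}(s_{0},t_{0}) \ne 0$ together with Lemma~\ref{lem:degenerate-2} to conclude that $(s_{0},t_{0})$ is a \emph{non-degenerate} singular point of $F_{2}$; this non-degeneracy is precisely the hypothesis under which Theorem~\ref{thm:singular-criterion} applies, so securing it is the first prerequisite.

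Next I would reuse the computation from the proof of Theorem~\ref{thm:singularsol-criterion2}: from the Jacobian (\ref{Jacobian-F2}) one reads off that, restricted to $S(F_{2})$, the differential $dF_{2}$ kills $\DD/\DD t$, so $\tilde{\eta}:=\DD/\DD t$ is an extended null vector field. Because $\tilde{\eta}$ is literally the coordinate vector field $\DD/\DD t$, the iterated directional derivatives collapse to ordinary $t$-derivatives, giving $\hat{\lambda}_{\eta}=\hat{\lambda}_{t}$, $\hat{\lambda}_{\eta\eta}=\hat{\lambda}_{tt}$ and $\hat{\lambda}_{\eta\eta\eta}=\hat{\lambda}_{ttt}$.

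Then, using the explicit singular identifier of Lemma~\ref{lem:singular-identifire2}, namely $\hat{\lambda}(s,t)=\widetilde{\overline{\alpha_{q}+\alpha \alpha_{p}}}(s,t)-\xi''(t)$, I would differentiate to obtain
\[
\hat{\lambda}_{\eta}=\left(\widetilde{\overline{\alpha_{q}+\alpha \alpha_{p}}}\right)_{t}-\xi'''(t), \quad
\hat{\lambda}_{\eta\eta}=\left(\widetilde{\overline{\alpha_{q}+\alpha \alpha_{p}}}\right)_{tt}-\xi^{(4)}(t), \quad
\hat{\lambda}_{\eta\eta\eta}=\left(\widetilde{\overline{\alpha_{q}+\alpha \alpha_{p}}}\right)_{ttt}-\xi^{(5)}(t).
\]
Substituting these three expressions into the equivalences (1)--(3) of Theorem~\ref{thm:singular-criterion}, evaluated at $(s_{0},t_{0})$, yields exactly the three assertions of the present theorem: the cuspidal-edge condition $\hat{\lambda}_{\eta}\ne0$; the swallowtail condition $\hat{\lambda}_{\eta}=0,\ \hat{\lambda}_{\eta\eta}\ne0$; and the butterfly condition $\hat{\lambda}_{\eta}=\hat{\lambda}_{\eta\eta}=0,\ \hat{\lambda}_{\eta\eta\eta}\ne0$.

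I expect the argument to be essentially mechanical once the preceding lemmas are in place; the only point genuinely needing care is verifying that $\DD/\DD t$ really is an extended null vector field in the non-generic normalization, which rests on the nonvanishing of the factor $\widetilde{\overline{x}}\sqrt{(\widetilde{\overline{p}})^{2}+(\widetilde{\overline{q}})^{2}+1}$ established in Lemma~\ref{lem:singular-identifire2} and Proposition~\ref{prop:singular-set2}. That nonvanishing uses the defining relation $1+x(\alpha_{y}+q\alpha_{z})=0$ of the non-generic type, which forces $\widetilde{\overline{x}}\ne0$ and thereby guarantees that $\hat{\lambda}$ faithfully detects $S(F_{2})$; all the remaining steps are the same differentiations as in the generic case.
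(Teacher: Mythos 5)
Your proposal is correct and follows essentially the same route as the paper's own proof: non-degeneracy via Lemma~\ref{lem:degenerate-2} from the standing hypothesis on the $s$-derivative, the extended null vector field $\tilde{\eta}=\DD/\DD t$ read off from the Jacobian as in Theorem~\ref{thm:singularsol-criterion2}, differentiation of the singular identifier $\hat{\lambda}$ from Lemma~\ref{lem:singular-identifire2} in the $t$-direction, and substitution into the criteria of Theorem~\ref{thm:singular-criterion}. Your extra remark that $\widetilde{\overline{x}}\ne 0$ (forced by the non-generic relation $1+x(\alpha_{y}+q\alpha_{z})=0$) underlies the validity of $\hat{\lambda}$ as a singularity identifier is a correct reading of the supporting lemmas.
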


\begin{proof}
	In order to show this theorem, we use Theorem~\ref{thm:singular-criterion}. By our assumption and Lemma~\ref{lem:degenerate-2}, the singular point $(s_{0},t_{0})$ is non-degenerate. Similar to the proof of Theorem~\ref{thm:singularsol-criterion2}, an extended null vector field $\tilde{\eta}$ is defined by
	\[
		\tilde{\eta}:=\DD/\DD t.
	\]
	Then by direct calculations, we have
	\begin{flalign*}
		\hat{\lambda}_{\eta}&=d\hat{\lambda}(\tilde{\eta})=\hat{\lambda}_{t}=\left(\widetilde{\overline{\alpha_{q}+\alpha \alpha_{p}}}\right)_{t} -\xi'''(t), \\
		\hat{\lambda}_{\eta \eta}&=d\hat{\lambda}_{\eta}(\tilde{\eta})=\hat{\lambda}_{tt} = \left(\widetilde{\overline{\alpha_{q}+\alpha \alpha_{p}}}\right)_{tt} -\xi^{(4)}(t),\\
		\hat{\lambda}_{\eta \eta \eta }&=d\hat{\lambda}_{\eta \eta}(\tilde{\eta})=\hat{\lambda}_{ttt} = \left(\widetilde{\overline{\alpha_{q}+\alpha \alpha_{p}}}\right)_{ttt} -\xi^{(5)}(t).
	\end{flalign*}
	Hence, by Theorem~\ref{thm:singular-criterion}, our assertion is proved.
\end{proof}

\begin{ex}
	We consider the case of $\alpha=(q-y)/x$. By Proposition~\ref{prop:form-I/ChI-2}, we have
		\[
			\tilde{\calI}=\{dx_{2}+x_{4}dx_{4}, dx_{3}-x_{1}dx_{4}\}_{\mathrm{diff}}.
		\]
		On the other hand, the inverse map of $\Phi_{1}$ locally is given by
		\[
			\Phi^{-1}_{2}(x_{0},x_{1},x_{2},x_{3},x_{4})=(x_{0},x_{4}-x_{0}x_{1},x_{2}+x_{3}x_{0}+x_{4}(x_{4}-x_{0}x_{1}),x_{3},x_{4}).
		\]
		Therefore, we have
	\begin{flalign*}
		\varphi_{2}(x_{0},x_{1},x_{2},x_{3},x_{4})&=x_{4}-x_{0}x_{1},\\
		\psi_{2}(x_{0},x_{1},x_{2},x_{3},x_{4})&=x_{2}+x_{3}x_{0}+x_{4}(x_{4}-x_{0}x_{1}).
	\end{flalign*}
	We next take an integral curve $C=\{(x_{1}(t),x_{2}(t),x_{3}(t),t)\ |\ t \in I\}$ of $\tilde{\calI}$. Then, by solving the differential equation $dx_{2}/dt+t=0$, we have the solution
	\[
		\mu(t)=-(1/2)t^{2} +D,\hspace{3mm}\text{where $D$ is a constant of integration.}
	\]
	Here we put $x_{3}(t):=t^{2}+t^{n}\ (n \geq 3)$. Then by a direct calculation, we have
	\[
		\widetilde{\overline{\alpha_{q}+\alpha \alpha_{p}}}-\xi''(t)=\frac{1}{s}-2-n(n-1)t^{n-2}.
	\]
	Then the point $(s_{0},t_{0}):=(1/2,0)$ is a singular point of $F_{2}$. Moreover, we obtain
	\[
		\left(\widetilde{\overline{\alpha_{q}+\alpha \alpha_{p}}}\right)_{s}=-1/s^{2} \ne 0,
	\]
	then $(s_{0},t_{0})$ is non-degenerate. Therefore, one has
	\begin{flalign*}
		\left(\widetilde{\overline{\alpha_{q}+\alpha \alpha_{p}}}\right)_{t}  -\xi'''(t) &= -\frac{n!}{(n-3)!}t^{n-3}, \\
		\left(\widetilde{\overline{\alpha_{q}+\alpha \alpha_{p}}}\right)_{tt} -\xi^{(4)}(t)&= -\frac{n!}{(n-4)!}t^{n-4},\\
\left(\widetilde{\overline{\alpha_{q}+\alpha \alpha_{p}}}\right)_{ttt} -\xi^{(5)}(t)&= -\frac{n!}{(n-5)!}t^{n-5}.
	\end{flalign*}
	Then, by Theorem~\ref{thm:singularsol-criterion2}, if $n=3$ then $F_{1}$ is a cuspidal edge at $(1/2,0)$. Moreover, at the point $(1/2,0)$, the function $F_{1}$ is the swallowtail if $n=4$, and the butterfly if $n=5$.
\end{ex}

\section{Acknowledgement}
The author would like to thank Kazuhiro Shibuya and Koichi Takeuchi for useful comments and suggestions. The author was supported by JSPS KAKENHI Grant Number 21J12161.

\bibliographystyle{plain}
\bibliography{mybibfile}
\end{document}